\documentclass[11pt]{article}

\usepackage[a4paper]{geometry}
\usepackage[utf8]{inputenc}
\usepackage{amsmath, amsthm, amssymb, amsfonts}
\usepackage{parskip}
\usepackage{mathrsfs}
\usepackage{mathtools}
\usepackage{graphicx}
\usepackage{xtab}
\usepackage{xspace}
\usepackage[shortlabels,inline]{enumitem}
\usepackage{lipsum}
\usepackage{authblk}
\usepackage{tgcursor}

\usepackage{hyperref}
\hypersetup{
    colorlinks,
    citecolor=black,
    filecolor=black,
    linkcolor=black,
    urlcolor=black
}
\usepackage{cleveref}

\newcommand{\bx}{\mathbf{x}}
\newcommand{\bs}{\mathbf{s}}

\newcommand{\bc}{\mathbf{c}}
\newcommand{\boldm}{\mathbf{m}}

\newcommand{\affa}[1]{\widetilde{A}_{#1}}

\DeclareMathOperator{\Supp}{Supp}
\DeclareMathOperator{\mult}{mult}

\DeclareMathOperator{\Real}{Re}
\DeclareMathOperator{\ch}{ch}
\DeclareMathOperator{\height}{ht}
\newcommand{\comment}[1]{}

\usepackage{parskip}
\usepackage{graphicx}
\usepackage[nodayofweek]{datetime}
\usepackage{tikz}
\usepackage{tikz-cd}
\usepackage{setspace}
\usepackage[normalem]{ulem}

\newdateformat{monthyear}{\monthname[\THEMONTH], \THEYEAR}

\newtheoremstyle{named}{}{}{\itshape}{}{\bfseries}{.}{.5em}{\thmnote{#3's }#1}

\theoremstyle{definition}
\newtheorem{definition}{Definition}[section]
\newtheorem{remark}[definition]{Remark}
\newtheorem*{remark*}{Remark}

\theoremstyle{plain}
\newtheorem{theorem}[definition]{Theorem}
\newtheorem*{theorem*}{Theorem}
\newtheorem{corollary}[definition]{Corollary}
\newtheorem{proposition}[definition]{Proposition}
\newtheorem{lemma}[definition]{Lemma}
\newtheorem{hypothesis}[definition]{Hypothesis}

\theoremstyle{remark}

\newtheorem*{example*}{Example}

\theoremstyle{named}

\def\hi{\mathfrak{h}}
\def\la{\langle}
\def\ra{\rangle}
\def\Zc{\mathcal{Z}}
\def\om{\omega}
\def\F{\mathbb{F}}
\def\C{\mathbb{C}}
\def\R{\mathbb{R}}
\def\Z{\mathbb{Z}}
\def\bm{\mathbf{m}}
\def\lam{\lambda}
\def\a{\alpha}
\def\CG{\mathrm{CG}}
\def\ss{\sigma}
\def\dd{\delta}
\def\wD{\widetilde{D}}
\def\wA{\widetilde{A}}
\def\wZ{\widetilde{Z}}
\DeclareMathOperator{\re}{Re}
\def\abs{\mathrm{abs}}
\def\XC{X_\mathbb{C}}
\def\symb#1#2{\left(\frac{#1}{#2}\right)}
\def\be{\begin{equation}}  \def\ee{\end{equation}}
\def\xx{\mathbf{x}}
\def\qpoc#1#2{\left(#1; #2 \right)_\infty}
\def\ov#1{\overline{#1}}
\def\Wc{\mathcal{W}}
\def\geom{\mathrm{geom}}

\begin{document}
\title{A decomposition of Weyl group multiple Dirichlet series\\
for symmetrizable Kac-Moody root systems}
\author{Alexandru A. Popa, Jack Walsh}
\newcommand{\Addresses}{{
\bigskip
\noindent\textsc{Alexandru A. Popa}\\
Institute of Mathematics ``Simion Stoilow'' of the Romanian Academy\\
Calea Grivitei 21, Bucharest, Romania\\
\textit{Email address}: \texttt{aapopa@gmail.com}

\smallskip
\noindent\textsc{Jack Walsh}\\
Institute of Mathematics ``Simion Stoilow'' of the Romanian Academy\\
Calea Grivitei 21, Bucharest, Romania\\
\textit{Email address}: \texttt{jackwalsh23@hotmail.co.uk}
}}

\maketitle

\begin{abstract}
We study twisted Weyl group multiple Dirichlet series attached to symmetrizable
Kac-Moody root systems, using the Chinta-Gunnells method to construct their $p$-parts.
Our main result is a decomposition theorem for functions invariant under the twisted
Chinta-Gunnells action: under natural analytic hypotheses, such a function has a unique
expansion in terms of shifted Chinta-Gunnells averages, indexed by the
dominant weights in the highest weight module determined by the twisting parameter.
In particular, we show that this decomposition holds for twisted multiple Dirichlet series over rational
function fields. For finite root systems, these results were proved by Friedlander~\cite{Friedlander2018TwistedWG}.
We also show that the relevant Chinta-Gunnells averages admit
analytic continuation to the interior of the complexified Tits cone.

In the affine $\widetilde{A}_1$ case, we prove extra functional equations,
not arising from the Weyl group, for the untwisted average and
for averages twisted by fundamental weights. As a consequence,
we obtain an explicit formula for the multiple Dirichlet series
with square-free twisting parameters, and show that it also satisfies an extra functional equation.

\end{abstract}

\section{Introduction}

A Weyl group multiple Dirichlet series (WMDS) associated with a finite
root system $\Phi$ of rank $r$ is a Dirichlet series in $r$ complex variables over a global field, satisfying a group of functional equations isomorphic to the Weyl group of $\Phi$.
They were first considered by Brubaker, Bump, Chinta, Friedberg, and Hoffstein in a series of papers \cite{wmds1, wmds2, wmds3, wmds4}, in order to unify many objects that had been studied previously in a case-by-case basis. According to the \emph{Eisenstein conjecture}, proposed in~\cite{wmds1}, they naturally appear as Whittaker coefficients of an Eisenstein series on a global metaplectic group with root system
$\Phi$. For  finite root systems, the Eisenstein conjecture was recently proved
by Chen~\cite{Chen2024MetaplecticMDS}, after being proved in type $A$ by  Brubaker, Bump, and Friedberg \cite{BrubakerBumpFriedberg2011Crystal}, and in type $B$ by Friedberg and Zhang \cite{FriedbergZhang2013OddOrthogonal}.

Historically, one of the main reasons for introducing such series was to extract arithmetic information about families of automorphic L-functions, such as nonvanishing of central values or the asymptotics of moments of quadratic twists of automorphic $L$-functions~\cite{GoldfeldHoffstein1985Eisenstein, GoldfeldHoffstein1990Nonvanishing}. 
A central example is provided by the WMDS associated with moments of the
the family of quadratic Dirichlet $L$-functions. In this setting,  the analytic properties of the series--its meromorphic cotinuation and the residues at its poles--govern the corresponding moment asymptotics. For a survey of what is known about moments of quadratic Dirichlet $L$-functions, we refer to the introduction
of~\cite{diaconu2023quadraticweylgroupmultiple}.The WMDS associated with the $r$-th moment in this family is
attached to a root system $\Phi_r$ of rank $r+1$, whose Dynkin diagram is star-shaped with a central node connected to $r$ exterior nodes. This root system is finite dimensional for $r\le 3$,
affine for $r=4$, hyperbolic Kac-Moody if $r=5$, and indefinite Kac-Moody for $r\ge 6$.

In this paper we are concerned with twisted WMDS for symmetrizable Kac-Moody root systems, which were first studied in~\cite{Lee2012WeylGM}. We now describe their construction, specializing to the function field setting considered in this paper.
 Fix a rational function field $\mathbb{F}_q(t)$ with $q \equiv 1 \mod{2n}$, so that $\mathbb{F}_q$ contains the $2n$-th roots of unity. Let $\Phi$ be a root system of rank $r$ and fix a twisting parameter $\bc = (c_1, \dots, c_r)$ with monic polynomials $c_i\in\mathbb{F}_q[t]$. Then the WMDS attached to $\Phi$ is given by
\begin{align*}
    \Zc(\bs;\bc) = \sum_{\boldm}\frac{H(\boldm;\bc)}{|m_1|^{s_1}\cdots |m_r|^{s_r}}
\end{align*}
where $\bs = (s_1,\dots, s_r) \in \mathbb{C}^r$ with sufficiently large real parts and the sum is over all tuples of monic polynomials $\boldm = (m_1, \dots, m_r) \in (\mathbb{F}_q[t])^r$. The coefficients $H$ satisfy a twisted multiplicativity property,
involving the $n$-th order residue symbols and depending on the Dynkin diagram of $\Phi$
\cite{Chinta2006OnTP,Chinta2008ConstructingWG},
so that the series is determined uniquely by specifying its $\pi$-parts. These are the generating series
\begin{align*}
    \sum_{(n_1,\dots, n_r) \in \mathbb{N}^r}H(|\pi|^{n_1}, \dots , |\pi|^{n_r};|\pi|^{v_\pi(c_1)},\dots,|\pi|^{v_\pi(c_r)})|\pi|^{-n_1s_1}\cdots|\pi|^{-n_rs_r}
\end{align*}
for a monic irreducible polynomial $\pi \in \mathbb{F}_q[t]$, with $v_\pi(c)$ the power of $\pi$ dividing $c$.
While there are several equivalent ways to represent these $\pi$-parts, we focus on the Chinta-Gunnells ``averaging method'' \cite{Chinta2006OnTP, Chinta2008ConstructingWG, Chinta2006WeylGM}. That is, Chinta and Gunnells defined
an action of the Weyl group of $\Phi$ on rational functions, depending on the twisting parameters
$(v_\pi(c_1), \dots, v_\pi(c_r) )$, and defined the $\pi$-part in
terms of the sum of Weyl group elements acting on the function $1$ (see Definition~\ref{def:cwaverage}).
By construction, the $\pi$-part is invariant under this action, and it turns out that this invariance is inherited by the  WMDS
$\Zc$ aftera change of variables, with respect to the twisting parameter $(\deg c_1, \dots, \deg c_r)$. For finite $\Phi$, 
this was established by Chinta and Gunnells 
over number fields \cite{Chinta2008ConstructingWG},
and by Friedlander over function fields \cite{Friedlander2018TwistedWG},
This was generalized  by Lee and Zhang to the symmetrizable Kac-Moody
setting over number fields \cite{Lee2012WeylGM}. 

The study of WMDS for symmetrizable Kac-Moody root systems was initiated by Lee and Zhang \cite{Lee2012WeylGM}, using the Chinta-Gunnells method to construct the $\pi$-parts.
Patnaik and Puskas constructed metaplectic covers of
Kac-Moody groups over nonarchimedean local fields and proved a
Casselman-Shalika formula for their Whittaker functions in terms of Chinta-Gunnells averages
\cite{Patnaik2017MetaplecticCO}. This provides important local evidence for the Eisenstein conjecture in this
setting: after the correct normalization, one expects the global WMDS to occur as Whittaker coefficients of metaplectic Kac-Moody Eisenstein series.
For finite $\Phi$, the Casselman-Shalika formula was proved by McNamara~\cite{McNamara2016CasselmanShalika}, and it was a key ingredient in the proof of the Eisenstein conjecture in~\cite{Chen2024MetaplecticMDS}.

In this paper we are concerned with the following question:
to what extent is a function--with $\Zc(\bs;\bc)$ as a motivating example--determined
by being invariant under the Chinta-Gunnells action? To state what is known, we need to
introduce some notation, with more details given in Section~\ref{sec:prelim}.
 Let $\alpha_i$ denote the simple roots in $\Phi$, and $\alpha_i^\vee$ the simple coroots.
It is convenient to encode a twisting parameter $(l_1, \ldots, l_r)\in\mathbb{Z}_{\ge 0}^r$ by a dominant weight $\omega\in P^+$,
such that $\la \omega, \alpha_i^\vee \ra=l_i$, with $\la\cdot,\cdot\ra$ a Weyl group invariant pairing.
Let $Z_\omega(\bx;q)$ be the corresponding Chinta-Gunnells
average, with $\bx=(x_1,\ldots, x_r)$ a multivariable, which
is invariant by construction under the Chinta-Gunnells
action $|_\omega^\mathrm{\CG} \sigma$ for all $\sigma$ in the Weyl group $W$.
Let
\[
 D(\bx;q) = \prod_{\alpha \in \Phi_{\text{re}}^+}(1-q\bx^{m_\alpha\alpha}),
\]
where $m_\alpha\mid n$ are integers used to define the action, and for
$\lambda=\sum k_i\alpha_i$ in the root lattice $Q$ we set $\bx^\lambda=\prod x_i^{k_i}$.

When $\Phi$ is finite and $n=2$, it was shown by Chinta, Friedberg, and Gunnells
that any rational function $Z(\bx)$ invariant under the action above, such that
$N=DZ$ is a polynomial,  $N(\bx)=\sum_{\lambda\in Q^+} b_\lambda \bx^\lambda$,
is determined uniquely by the coefficients $b_{\omega-\xi}$ for $\xi$ in the set
\begin{equation}\label{eq:Pi_omega}
 \Pi_\omega=\{\xi\in P^+\mid \xi\le \omega  \}
\end{equation}
of dominant weights in the representation with highest weight $\omega$ \cite{Chinta2006OnTP}. Later
Friedlander generalized this result to arbitrary $n$, and further proved that the space of such functions
has dimension precisely the cardinality of $\Pi_\omega$, with $\bx^{\omega-\xi} Z_\xi$
for $\xi\in \Pi_\omega$ forming a basis \cite{friedlanderpparts}. The proof relies on
encoding the invariance under simple reflections as a set of relations satisfied by the coefficients $b_\lambda$ of $N$, and showing that the relations have a recursive structure depending on the Weyl group orbits centered at $\omega+\rho$ with $\rho$ the sum of the fundamental weights.
For a concise overview of this argument in the quadratic case see \cite[Sec. 3]{DIACONU2025110359}.

Our main result extends this finite-type picture to the case of symmetrizable Kac-Moody root systems.
Let $Z(\bx)$ be invariant under the
Chinta-Gunnells action $|_\omega^{\mathrm{\CG}}$, and assume that $DZ$ is
holomorphic on a region $Y$ satisfying Hypothesis~\ref{hyp:analyticassumptions}.
 In particular $Z$ has a Taylor expansion at the origin,
$$
Z(\bx)=\sum_{\lambda\in Q^+}a_\lambda \bx^\lambda,
$$
whose coefficients satisfy relations  given explicitly in Proposition~\ref{prop:relations},
expressing the invariance of $Z$ under simple reflections.
From these relations, an immediate recursive argument in terms of the height
$\height(\lambda)$ shows that all the coefficients are determined by the coefficients $a_{\omega-\xi}$
with $\xi$ in the same set $\Pi_\omega$ defined before (which is infinite in general).
Moreover, we show in Theorem~\ref{the:basis} that the functions   $\bx^{\omega-\xi} Z_\xi$
for  $\xi\in \Pi_\omega$ play the role of a basis for the space of solutions, namely there is a unique decomposition
\begin{equation}\label{eq:decomp_intro}
Z(\bx)=\sum_{\xi \in \Pi_\omega} c_{\omega-\xi} \bx^{\omega-\xi} Z_\xi(\bx),
\end{equation}
where $c_\lambda$ are the coefficients of the Taylor expansion of $\Delta Z$, with 
$\Delta$ the infinite product in Definition \ref{def:cwaverage}.
 We note that working with the coefficients of the Taylor expansion
of $Z$, rather than those of its numerator $DZ$, 
greatly simplifies the proof even in the case of finite $\Phi$.

At the global level, in Corollary~\ref{thm:globalbasis} we give the corresponding
decomposition for the WMDS $\Zc(\bs;\bc)$.
We also show that the Chinta-Gunnells averages appearing in this decomposition 
extend to the same domain as the Weyl-Kac character:
the function $D Z_\xi$ admits analytic continuation
to the interior of the complexified Tits cone (Theorem \ref{thm:convergence}). This was previously known only for affine root systems~\cite{diaconu2023quadraticweylgroupmultiple}.
The ultimate goal would be to show that the MDS itself has meromorphic continuation to the interior
of the complexified Tits cone, but that problem is extremely difficult. For the star-shaped root systems $\Phi_r$, this was conjectured by Diaconu and Twiss \cite{DiaconuTwiss2023Secondary}. It was shown there 
that the conjectured analytic continuation of the untwisted MDS
and of certain twists related to the ones considered here, would lead to asymptotic formulas
for the $r$-th moment of quadratic Dirichlet $L$-functions, with infinitely many secondary terms.
For the untwisted MDS over function fields, this was proved in~\cite{DIACONU2025110359} for the fourth moment, 
as we recall below.

The decomposition of $\Zc(\bs;\bc)$ into Chinta-Gunnells
averages therefore suggests the following problem, 
whose resolution would have implications for its meromorphic continuation. 

\noindent\textbf{Problem.} Determine explicit formulas, or  effective
growth estimates, for the coefficients appearing in the expansion \eqref{eq:decomp_intro}
for  $\Zc(\bs;\bc)$, under the change of variables $x_i=q^{-s_i}$.

To shed some light on this problem, we remark that
one complicating feature in the infinite-dimensional case is the failure of the local-to-global principle
in the untwisted situation, that is for $\omega=0$. For finite $\Phi$,  the above decomposition shows that
the untwisted WMDS $\Zc(\bs)$ equals the untwisted Chinta-Gunnells average $Z_0(\bx)$, after the change
of variables $x_i=q^{-s_i}$, and with compatible definitions of the other parameters used to define the action.
This is known as the local-to-global principle\footnote{Our normalization differs from the standard one, so the local-to-global principle is stated differently from \cite{Chinta2008ConstructingWG, friedlanderpparts, Lee2012WeylGM}.}.

For general $\Phi$, the situation is more complicated due to the existence of imaginary roots.  
We show in Corollary~\ref{cor:coeffsdefine} that a function $Z$ invariant under the untwisted action
is completely determined by its Taylor coefficients $a_\lambda$ for $\lambda \in Q^+$
such that
\begin{equation}\label{eq:lambda_domain}
 \la \lambda, \alpha_i^\vee \ra\le 0   \qquad  (i=1,\dots, r).
\end{equation}
The set of such nonzero $\lambda$ with connected support in the Dynkin diagram of $\Phi$
is precisely the set of representatives for the orbits of imaginary roots $\Phi^+_{\text{im}}$
under the Weyl group described by Kac in \cite[\S 5.3]{Kac_1990}. Since the precise determination of imaginary roots is a difficult problem beyond the affine case, making progress in the above
problem will require significantly new ideas.

For affine $\Phi$, the nonzero $\lambda\in Q^+$ satisfying~\eqref{eq:lambda_domain}
are precisely the multiples of the minimal positive imaginary root,
and we recover the result proved for $\wD_4$ and $n=2$ in~\cite{BucurDiaconu2010Moments},
in connection with the fourth moment of quadratic Dirichlet $L$-functions.
More precisely, the decomposition~\eqref{eq:decomp_intro} implies that the
untwisted WMDS $\Zc(\bs)$ satisfies
\[\Zc(\bs)= C(\bx^\delta) Z_0(\bx;q), \qquad \text{for } x_i=q^{-s_i},
\]
where $C(x)$ is a power series in one variable and $\delta$ is the minimal positive imaginary root.
For $\wD_4$, the problem above was solved in \cite{DIACONU2025110359}. More precisely,
if one normalizes $Z_0$ by
\[\widetilde{Z}_0(\bx;q) = \frac{Z_0(\bx;q)}{(\sqrt q \bx^\delta; \bx^{2\delta})^2_\infty}
\]
with $(a;q)_\infty:=\prod_{n\ge 0}(1-a q^n)$ the infinite Pochhammer symbol,
and redefines $\widetilde{\mathcal{Z}}_\Phi(\bs)$ as the MDS with $\pi$-part $\widetilde{Z}_0$,
then the local-to-global principle is recovered:
\[
\widetilde{\mathcal{Z}}_\Phi(\bs)= \widetilde{Z}_0(\bx;q),\qquad x_i=q^{-s_i}.
\]
The key ingredient used in the proof is an unexpected symmetry of the
untwisted average $Z_0$,  under the transformation $\sqrt q\mapsto \sqrt q \bx^\delta$.
The existence of this extra functional equation--and therefore of the local-to-global identity after a suitable normalization--was proved in an ad hoc way for $\wD_4$, but it is expected to hold for all affine root systems \cite{DIACONU2025110359}.

For the twisted averages, nothing is known about the existence of
such an extra functional equation for affine root systems. In the last section, we discuss this question
in the simplest case, namely $\wA_1$. We prove the existence of an extra functional equation
both for the untwisted average $Z_0$ (section~\ref{subsec:untwistedaffa1computation}) and for the twisted $Z_{\omega_i}$, where $\omega_i$ is a fundamental weight (section~\ref{subsec:twistedaffA1}). As a consequence, we prove explicit formulas
for the Chinta-Gunnells averages $Z_0$ and $Z_{\omega_i}$, as well as for the MDS
$\mathcal{Z}(\bs; c_0,c_1)$
with square-free twisting polynomials $c_0, c_1$--see Theorems~\ref{the:explicituntwistedaverage},
\ref{the:explicittwistedaverage} and \ref{thm:explicitaffa1mds}. The formula for $Z_0$ can be seen as a $q$-deformation of the classical Jacobi triple product formula:  
\[
\sum_{n\in\mathbb{Z}} u^nx_0^{n}\bx^{\frac{n(n-1)}{2}\delta}\frac{(u^{-1}x_0;\bx^{\delta})_n}{(ux_0;\bx^{\delta})_n} = \frac{(u^2\bx^{\delta};\bx^{2\delta})_\infty(x_0^2;\bx^{2\delta})_\infty(x_1^2;\bx^{2\delta})_\infty(\bx^{2\delta};\bx^{2\delta})_\infty}{(ux_0;\bx^{\delta})_\infty(ux_1;\bx^{\delta})_\infty},
\]
where $\delta=\alpha_0+\alpha_1$ and $u^2= q$; for $u=-1$ it becomes the Jacobi triple product identity. The left-hand side can
be written as the basic hypergeometric function  ${}_1\psi_2\!\left(\begin{matrix}a\\ b,0\end{matrix};p,-b\right)$, for $p=\xx^\dd$, $a=u^{-1} x_0$, $b=ux_0$, and the identity above can be also proved using standard hypergeometric function transformations.

Interestingly, the MDS $\Zc(\bs; c_0,c_1)$ also satisfies an extra functional equation, although
its coefficients are not polynomials in $\sqrt q$. Instead, its coefficients can be expressed
in terms of $q$-Weil numbers coming from the coefficients of the Dirichlet $L$-functions $L(s,\chi_{c_i})$,
and the extra functional equation involves the transformation $\alpha\mapsto \alpha q^{- m \delta (\bs)}$,
where $\alpha$ is a $q$-Weil number of weight $m$--see Theorem~\ref{thm:tauactiononmds}.

The case of $\wA_1$ is quite degenerate,
and we certainly do not expect such explicit formulas to hold for other affine root systems. However
this case suggests that the global MDS may satisfy extra functional equations in the twisted case as
well, which would be an essential ingredient in proving a local-to-global principle in the twisted affine setting.

Finally, we mention a different construction of a canonical untwisted MDS over the rational
function field for arbitrary $\Phi$, using a set of axioms that includes the local-to-global principle.
The axiomatic approach was initiated
by Diaconu and Pa\c sol for star-shaped root systems in \cite{Diaconu2018ModuliOH},
further developed by
Whitehead for root systems of type $\affa{r}$~\cite{Whitehead2014MultipleDS}, and later generalised by
Sawin~\cite{Sawin2024GeneralMD} to a setting that includes the WMDS associated with symmetrizable Kac-Moody root systems.
The axiomatic approach departs from the traditional one, in that the WMDS is not assumed to satisfy a group
of functional equations. However,
Sawin and Whitehead showed that the resulting WMDS satisfies functional equations that are equivalent to invariance under the untwisted Chinta-Gunnells action in the symmetrizable Kac-Moody case \cite{sawinwhitehead}.
Therefore, the decomposition~\eqref{eq:decomp_intro}  makes sense for the axiomatic MDS,
and it is an interesting question to investigate the coefficients appearing in this decomposition.
In the $\wD_4$ case it was shown in~\cite{DIACONU2025110359} that the WMDS constructed with normalized
$\pi$-part $\widetilde{Z}_0$ above agrees with the axiomatic MDS.
For star-shaped root systems $\Phi_r$ with $r>4$, relating the axiomatic MDS to the one
constructed from $\pi$-parts is an interesting open problem, in view of its applications to the
asymptotics of $r$-th moments of Dirichlet $L$-functions.

 \textbf{Acknowledgments.} The first author is grateful to Adrian Diaconu, Vicen\c tiu Pa\c sol and Ion Bogdan for many helpful conversations.  
 The authors were supported by the project CF159/31.07.2023, ``Group schemes, root systems, and related representations'', funded by the European Union - NextGenerationEU through the PNRR call no. PNRR-III-C9-2023-I8.

\section{Preliminaries}\label{sec:prelim}
Let $A$ be a symmetrizable generalized Cartan matrix of dimension $r$ and rank $\ell$, with realization $(\mathfrak{h},\Pi,\Pi^\vee)$, root system $\Phi$, and Weyl group $W$ (in general the reader may consult \cite{Kac_1990} for details on Kac-Moody algebras).
Explicitly $\Pi = \{\alpha_1,\dots, \alpha_r\}\subset \hi^*$, $\Pi^\vee = \{h_1, \ldots, h_r)\subset \hi$ are the simple roots and coroots respectively. Let $W\subset GL(\hi^*)$ be the Weyl group of $\Phi$,
generated by the simple reflections $\sigma_i$.

We fix a decomposition $A=DB$, with a diagonal matrix $D$ and a symmetric matrix $B$. We normalize
the matrix $B=(b_{ij})$ such that $b_{ij}\in \Z$ and $b_{ii}\in 2\Z$
We fix a symmetric, non-degenerate, Weyl group invariant bilinear form
$\la \cdot, \cdot \ra$ on $\hi^*$,
such $ \la\a_i,\a_j \ra=b_{ij}$ \cite[Ch. 2]{Kac_1990}. Let $q(\a)=\tfrac12 \la\a,\a\ra$ be the associated
Weyl invariant quadratic form, which takes integral values with our
normalization.\footnote{Our matrix $B$, and therefore the associated bilinear form, are twice the ones in~\cite{Lee2012WeylGM}.} Using the bilinear form, we associate to the simple coroots $h_i$
the coroots $\a_i^\vee=\a_i/q(\a_i)\in \hi^*$, such that for $\lam\in \hi^*$ we have
\[
\lam(h_i)=\la\lam, \a_i^\vee \ra.
\]
More generally, for any $\a\in\Phi$ we define $\a^\vee=\a/q(\a)$.

Let $Q$ denote the root lattice spanned over $\mathbb{Z}$ by the simple roots, and let $P=\{\lambda\in\hi^* \mid \la\lambda,\alpha_i^\vee \ra\in \mathbb{Z} \}$ be the weight lattice. We
denote by $P^+$ and $P^{++}$, respectively the subsets of dominant and strongly dominant weights.

A root $\alpha \in \Phi$ is called real if $\alpha = \sigma \alpha_i$ for some $\sigma \in W$,  and it is called imaginary if it is not real. The sets of real and imaginary roots are denoted by $\Phi_{\text{re}}$ and $\Phi_{\text{im}}$ respectively. Let $\mult(\alpha)$ denote the multiplicity of $\alpha$, i.e. the dimension of the corresponding root space.
We have $\mult(\a)=1$ for $\a\in \Phi_{\text{re}}$.

For $\sigma\in W$, we let $\Phi(\sigma):=\{\lambda \in \Phi^+\mid \sigma(\lambda) \in \Phi^-\}$. This set consists
of real roots, and its cardinality is the length $\ell(\sigma)$.

We fix $n\ge 2$ an integer. To define the Chinta-Gunnells action,
for each $\alpha \in \Phi$ we define an integer $m_\a\mid n$ by
\begin{align*}
m_\alpha = 
\begin{cases}
\frac{n}{\gcd(n,q(\alpha))} &\text{if }\alpha \in \Phi_{\text{re}},\\
n &\text{if } \alpha \in \Phi_{\text{im}}.
\end{cases}
\end{align*}
We write $m_i$ for $m_{\alpha_i}$.

We fix a real number $q>1$. Set  $g_0 = -1$, and let $g_1, \dots, g_{n-1}$ be formal parameters.
For arbitrary $t\in \Z$, we set $g_t=g_{r_n(t)}$ where
$0 \leq r_{k}(t)\leq k-1$ is the remainder under division by $k$ of $t$.
We assume that $g_t$ satisfy the relations
\begin{align*}
g_tg_{-t} = q \quad \text{ if } t\not\equiv 0 \mod n.
\end{align*}

With respect to the basis $\Pi$ of simple roots we write $x_i = \bx^{\alpha_i}$ and treat $\bx = (x_1, \dots, x_r)$ as a multivariable. For $\lambda=\sum_i k_i \alpha_i\in Q$, we denote by
$\bx^{\lambda}$ the monomial $\prod_i x_i^{k_i}$.
There is an action of the Weyl group $W$ on monomials given by $\sigma \bx^{\lambda} = \bx^{\sigma^{-1}\lambda}$.

We fix twisting parameters $l_1,l_2, \dots, l_r \in \mathbb{Z}_{\geq 0}$.
It is convenient to encode the twisting parameters by fixing  a
dominant weight $\omega\in P^+$,  such that
\begin{equation}\label{eq:twist_parameter}
 \la\omega,\alpha_i^\vee \ra=l_i, \quad  (i=1,\ldots, r).
\end{equation}
Note that if $A$ is nonsingular, we have a unique choice $\omega = \sum_{i=1}^r l_i\omega_i$,
with $\omega_i\in P^+$ the fundamental weights.

\begin{definition}
The \textit{twisted Chinta-Gunnels action} with twisting parameter $\omega$ as in~\eqref{eq:twist_parameter}
is defined on monomials as
\begin{align*}\bx^\lambda \vert^{\CG}_\omega\sigma_i = \frac{\bx^{\sigma_i\lambda + (l_i+1)\alpha_i}}{1-qx_i^{m_i}}\left[x_i^{-r_{m_i}(\delta_i(\lambda))}(1-q)-(x_i^{-m_i}-1)g_{q(\alpha_i)\delta_i(\lambda)} \right],
\end{align*}
where $\delta_{\omega,i}(\lambda)=\delta_i(\lambda):= l_i+1-\langle \lambda, \alpha_i^{\vee} \rangle$.

We also have the related action $\bx^\lambda\vert_\omega\sigma_i = -x_i^{m_i}\cdot\bx^\lambda\vert_\omega^{\CG}\sigma_i$.
\end{definition}

It was shown by Chinta and Gunnells \cite{Chinta2006WeylGM} that this action extends to an action on the Weyl group $W$ on the space of rational functions in $\bx$. For infinite root systems,
Lee and Zhang \cite{Lee2012WeylGM} defined the Chinta-Gunnells action termwise on a space of formal distributions $\sum_{\lambda\in Q} a_\lambda \bx^\lambda$
with a growth condition on the coefficients $a_\lambda$,
which ensures that this space is preserved by the action. Here we take the analytic viewpoint,
by showing that the Chinta-Gunnells averages and their WMDS counterparts extend meromorphically to a certain domain after the change of variables $x_i=q^{-s_i}$, and in that domain are invariant under the Chinta-Gunnells action.

It is therefore convenient to give the following equivalent definition of the Chinta-Gunnells action on more general functions, mirroring the one given in the quadratic case in~\cite{Chinta2006OnTP}.

\begin{definition}\label{def:j_signed}
Let $\eta \in \mu_{m_i}$ be an $m_i$-th root of unity and let $f$ be a function in $\bx$. We define the
generalized sign function
\begin{align*}
    \varepsilon_{i,\eta} \bx^\lambda = \eta^{\langle\lambda, \alpha_i^\vee\rangle}\bx^\lambda.
\end{align*}
We define the \textit{$j$-signed part of $f$ with respect to $i$} as 
\begin{align}
\label{eq:signedpart}
f_i^{(j)}(\bx) = \frac{\sum_{\eta \in \mu_{m_i}}\eta^{j-l_i-1}f(\varepsilon_{i,\eta}\bx)}{m_i}.
\end{align}
\end{definition}

\begin{remark}
If we assume that $f = \sum_\lambda a_\lambda \bx^\lambda$ has a power series expansion converging in a neighborhood of $\bx=\mathbf{0}$ then
\begin{align*}
    f_i^{(j)}(\bx) = \sum_{\delta_i(\lambda)\equiv j \mod{m_i}}a_\lambda \bx^\lambda.
\end{align*}
If $f$ is a function defined on a $\sigma_i$-invariant Reinhardt domain (see Definition~\ref{def:Reinhardt}) which is invariant under $\sigma_i$, the Chinta-Gunnells action becomes\footnote{
To avoid potential confusion we note that $f_i^{(j)}(\sigma \bx)$  is the
function $f_i^{(j)}$ evaluated at $\sigma \bx$.}
\be\label{eq:CG_defsigned}
f(\bx)\vert ^{\CG}_\omega \sigma_i = \frac{x_i^{l_i+1}}{1-qx_i^{m_i}}\sum_{0\leq r\leq m_i-1}f_i^{(j)}(\sigma_i \bx) \left[x_i^{-j}(1-q)-(x_i^{-m_i}-1)g_{q(\alpha_i)j} \right].
\ee

\end{remark}

\begin{definition}
\label{def:cwaverage}
Let
\begin{align*}
D(\bx;q) = \prod_{\alpha \in \Phi_{\text{re}}^+}(1-q\bx^{m_\alpha\alpha}),\qquad 
\Delta(\bx) = \prod_{\alpha \in \Phi^+}(1-\bx^{m_{\alpha}\alpha})^{\mult(\alpha)}.
\end{align*}

The \textit{twisted Chinta-Gunnells average} is defined as
\begin{align*}
    Z_\omega(\bx;q) = \frac{\sum_{\sigma \in W}1 \vert_\omega \sigma(\bx)}{\Delta(\bx)}.
\end{align*}
We define $Z_\omega^* = \Delta Z_\omega$.
\end{definition}

\begin{remark}
\label{rmk:twistedchar}
One may view the Chinta-Gunnells average $Z_\omega$ as a $q$-deformation of the Kac-Weyl character corresponding to the highest weight module $Y(\omega)$ of the corresponding Kac-Moody algebra. More precisely taking each $g_i = -1$ and setting $m_\alpha = 1$ gives exactly $q^{\omega(\bs)}\ch_{L(\omega)}$ (c.f. \cite[10.6.1]{Kac_1990}).
\end{remark}

To discuss convergence of the functions above, consider the fundamental Weyl chamber
\begin{align*}
    C = \{ h \in \mathfrak{h}_{\mathbb{R}}\mid \alpha_i(h) \geq 0 \text{ for }i=1,\dots,r\},
\end{align*}
and the Tits cone
\begin{align*}
    X = \bigcup_{\sigma \in W}\sigma(C).
\end{align*}
We also define the complexified Tits cone
\begin{align*}
    X_{\mathbb{C}} = \{x+iy \in \mathfrak{h} \mid x \in X, y \in \mathfrak{h}_\mathbb{R}\},
\end{align*}
and denote its interior by $X_\C^\circ$. 
 We denote by $H_j$ the codimension one boundary of $C$ and $\sigma_jC$ given by
\begin{align*}
    H_j = \{ h \in \mathfrak{h}_{\mathbb{R}}\mid \alpha_j(h) = 0, \alpha_i(h) \geq 0 \text{ for }i\neq j\}.
\end{align*}

To discuss convergence of power series expansions in $\bx$ with respect to the regions defined above,  
we will let  $x_i = q^{-s_i}$, with $\bs = (s_1, \dots s_r) \in \mathbb{C}^r$. 
By slightly abusing notation, we also identify $\bs$ with the
element $\sum s_i \omega_i^\vee\in \hi$, where $\omega_i^\vee\in \hi_\R$ are the fundamental coroots. In particular, a point $\bs\in \R^r$ is identified with the corresponding element of $\hi_\R$. 

With this identification, for $\lambda = \sum a_i\alpha_i \in Q$ 
we have $\lambda(\bs) = \sum a_i s_i$,
 We also denote by $q^{-\bs}$ the multivariable $(q^{-s_1},\ldots, q^{-s_r} )$.

\begin{lemma}
\label{lem:ddeltaconvergence}
The infinite products $D(q^{-\bs};q)$ and $\Delta(q^{-\bs})$ converge absolutely on $\XC^\circ$ to holomorphic functions.
\end{lemma}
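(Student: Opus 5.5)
Since $D(q^{-\bs};q)$ and $\Delta(q^{-\bs})$ are products of factors $1-c\,q^{-m_\alpha\alpha(\bs)}$ (with $c\in\{1,q\}$ and multiplicities $\mult(\alpha)$), absolute and locally uniform convergence on $\XC^\circ$ reduces to the locally uniform convergence of
\[
\sum_{\alpha\in\Phi^+}\mult(\alpha)\, q^{-m_\alpha\Real\alpha(\bs)} \le \sum_{\alpha\in\Phi^+}\mult(\alpha)\, q^{-\Real\alpha(\bs)},
\]
the inequality holding wherever $\Real\alpha(\bs)\ge0$ for all but finitely many $\alpha$ (since $m_\alpha\ge1$). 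The sum for $D$ runs over real roots only, so it is dominated by the one for $\Delta$. Because $|q^{-\alpha(\bs)}|=q^{-\alpha(\Real \bs)}$ depends only on $x=\Real\bs$, and $\XC^\circ=\{x+iy: x\in X^\circ\}$, it suffices to show that $\sum_{\alpha\in\Phi^+}\mult(\alpha)q^{-\alpha(x)}$ converges uniformly on compact subsets $K\subset X^\circ$. Weierstrass' theorem then gives holomorphy. Only finitely many factors can vanish near a given point, and these are harmless: they are entire functions.

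\textbf{Step 1: compact subsets of $X^\circ$.} I will use Kac's description (\cite[Prop.~3.12]{Kac_1990}): $h\in X^\circ$ if and only if $\alpha(h)<0$ for only finitely many $\alpha\in\Phi^+$. The key claim is a uniform linear lower bound: for a compact $K\subset X^\circ$ there exist $\epsilon>0$ and a finite set $F\subset\Phi^+$ such that
\[
\alpha(x)\ge\epsilon\,\height(\alpha)\qquad\text{for all } x\in K,\ \alpha\in\Phi^+\setminus F.
\]

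To prove this, cover $K$ by finitely many sets of the form $\sigma(U)$, where $\sigma\in W$ and $U$ is a small neighborhood. By the structure of $X^\circ$, each point of $X^\circ$ lies in $\sigma(C)$ for some $\sigma$, and near it $X^\circ$ is covered by finitely many chambers $\sigma\tau(C)$ with $\tau$ in a finite parabolic subgroup. So it suffices to treat $x\in\sigma(C')$, where $C'$ is a compact piece of $C$ whose points have finite stabilizer type. Writing $\alpha(\sigma x')=(\sigma^{-1}\alpha)(x')$ reduces the claim to $x'\in C'$, at the cost of the finitely many roots in $\Phi(\sigma^{-1})$, which go into $F$.

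For $x'\in C$ we have $\alpha(x')=\sum k_i\alpha_i(x')$. This is bounded below by $\epsilon\height(\alpha)$ unless $\alpha$ is supported mostly on the set $J(x')=\{i:\alpha_i(x')=0\}$. Because the point is interior, $J$ is of finite type, so only finitely many roots have support inside $J$. A root with support not contained in $J$ still needs a height-proportional bound. This is the main obstacle. I would handle it with the fact that each positive root $\alpha=\sum k_i\alpha_i$ restricted to a finite-type subdiagram $J$ has bounded $J$-coefficients relative to its non-$J$ coefficients. Concretely, $\alpha$ lies in a finite $\mathfrak g_J$-module generated by root vectors outside $J$, so $\sum_{i\in J}k_i\le M\sum_{i\notin J}k_i$ for a constant $M$ depending on $J$. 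Compactness then makes $\epsilon$ uniform.

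\textbf{Step 2: growth of multiplicities.} The number of $\alpha\in Q^+$ of height $N$ is $O(N^{r-1})$. Moreover $\mult(\alpha)\le\dim\mathfrak n_+$ in degree $\alpha$, which is bounded by the number of words of length $\height(\alpha)$ in the $r$ generators, hence at most $r^{\height\alpha}$. This exponential bound is too weak against $q^{-\epsilon N}$ when $\epsilon$ is small.

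I would instead use the convergence of the denominator in the Weyl–Kac formula (\cite[Ch.~10]{Kac_1990}). The product $\prod_{\alpha\in\Phi^+}(1-e^{-\alpha})^{-\mult\alpha}$ is the generating function for $\dim U(\mathfrak n_-)$, and its absolute convergence on the interior of the complexified Tits cone is established there, in the setting where $e^{-\alpha}$ is evaluated at $h$ with $\Real h\in X^\circ$. That result gives exactly the needed summability of $\sum\mult(\alpha)|e^{-\alpha(h)}|$, and I can apply it to $h=(\log q)\,x$, since $X^\circ$ is a cone.

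I therefore expect to take Steps 1 and 2 essentially from Kac's Chapter 10 and Proposition 3.12. The genuine work is checking uniformity on compacta and the reduction from the factors of $D$ and $\Delta$ (with the exponents $m_\alpha$ and the constant $q$) to that series, which is the bound displayed at the start.
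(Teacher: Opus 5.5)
\textbf{The gap is in Step~2, and a better reference will not close it.} You correctly noticed that root multiplicities grow exponentially in height, so a bound $\alpha(x)\ge\epsilon\,\height(\alpha)$ with small $\epsilon$ is useless. But $C^\circ$ and $X^\circ$ are cones, so $\epsilon$ does become arbitrarily small along rays towards the apex. In indefinite type the series $\sum_{\alpha\in\Phi^+}\mult(\alpha)q^{-\alpha(\Real\bs)}$ actually diverges there.

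Take $A=\bigl(\begin{smallmatrix}2&-3\\-3&2\end{smallmatrix}\bigr)$. Substituting $\bx^{\alpha_1}=\bx^{\alpha_2}=z$ in the Weyl--Kac denominator identity gives
\[
\prod_{\alpha\in\Phi^+}(1-z^{\height\alpha})^{\mult(\alpha)}=\sum_{w\in W}(-1)^{\ell(w)}z^{\height(\rho-w\rho)}=1-2z+2z^5-2z^{16}+2z^{45}-\cdots .
\]
The right side is holomorphic on $|z|<1$ and is about $-0.045$ at $z=0.6$. Suppose $\sum_\alpha\mult(\alpha)(0.6)^{\height\alpha}$ were finite. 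Then the product would converge absolutely and uniformly on $[0,0.6]$ to a positive continuous function. That function agrees with the right side near $0$, hence on all of $[0,0.6]$, which is a contradiction.

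Since $m_\alpha\le n$, it follows that $\Delta(q^{-\bs})$ does not converge absolutely at $\bs=(s,s)\in C^\circ$ as soon as $q^{-ns}\ge 0.6$. So the absolute convergence on all of $\XC^\circ$ that you attribute to Kac's Chapter~10 cannot hold for general symmetrizable $A$. The $\Delta$ half of the statement holds on $\XC^\circ$ in finite or affine type, where multiplicities are bounded, but not in general. In general, the crude bound $\sum_{\height\alpha=N}\mult(\alpha)\le r^N$ gives convergence where $q^{-n\min_i\Real s_i}<1/r$. The imaginary part of $\Delta$ is $W$-invariant, so this extends to the $W$-translates and convex hull of that region.

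The paper's proof has the same weak point. It quotes Kac's Proposition~10.6 for convergence of $\sum\mult(\alpha)q^{-\alpha(\bs)}$ on all of $C^\circ$, which the example above contradicts. It then spreads this to $X^\circ$ by the Weyl group and convexity.

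\textbf{The rest of your argument is sound, and it proves the $D$ half on its own.} Your reduction to $\sum\mult(\alpha)q^{-\alpha(x)}$ on compact subsets of $X^\circ$ is correct. Combine Step~1 with the multiplicity-free count of at most $(N+1)^r$ positive roots of height at most $N$. This gives absolute, locally uniform convergence of $D(q^{-\bs};q)$ on $\XC^\circ$, since every factor of $D$ has multiplicity one. No appeal to Kac is needed, and there is no reason to dominate $D$ by $\Delta$.

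The paper reaches $X^\circ$ by a different route for this part. It proves convergence on $C^\circ$ and transports it to $\sigma C^\circ$, where only finitely many factors change. It then uses convexity of $t\mapsto q^{-t}$ and the description of $X^\circ$ as the convex hull of $\bigcup_\sigma \sigma C^\circ$. That route avoids your case analysis near walls with finite stabilizer, but it depends on the convex-hull description of $X^\circ$.
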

\begin{proof}
By Proposition 10.6 and formula (10.6.3) in \cite{Kac_1990},  we have that
\[
\sum_{\alpha \in \Phi_+} \mult(\alpha) q^{-\alpha(\bs)}
\]
converges absolutely for $\bs \in \mathfrak{h}$ such that $\Real(\bs) \in C^\circ$. By comparison with this series we conclude that $D(q^{-\bs};q)$ and $\Delta(q^{-\bs})$ also converge in the same region.

For $\ss\in W$ we have that 
\[
\prod_{\a\in \Phi^+_{\text{re}}\setminus\Phi(\ss) } (1-q^{1-m_\a \a(\bs)})
\]
is invariant under $\ss$,  so $D(q^{-\bs};q)$ also converges absolutely in the region $\ss C^\circ$. 
Combining with the convexity of the exponential function $q^x$ for $q>1$ we conclude that 
$D(q^{-\bs};q)$ converges in $X_\C^\circ$, and a similar argument applies to $\Delta$. 
\end{proof}

\section{Analytic properties of the Chinta-Gunnells average}
We now show that the twisted Chinta-Gunnells average has meromorphic continuation to the interior of the complexified Tits cone. This is known for $\Phi$
affine and $n=2$ by~\cite[Proposition 3.1]{DIACONU2025110359}.

We fix throughout a twisting parameter $\omega \in P^+$ 
as in~\eqref{eq:twist_parameter}  and write 
\[
\theta = \omega + \rho,
\] 
where $\rho = \sum_{i=1}^r \omega_i$ is the Weyl vector.

We first prove three lemmas which will be used repeatedly. See also Lemmas 2.1, 2.9 and 2.15 in \cite{Lee2012WeylGM}.

\begin{lemma}
\label{lem:malphasign} For $\a\in\Phi$ we have 
$\langle m_\alpha \alpha, \alpha_i^{\vee}\rangle \equiv 0 \mod m_i$.
\end{lemma}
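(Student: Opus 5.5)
We need $\langle m_\alpha\alpha,\alpha_i^\vee\rangle = m_\alpha\langle\alpha,\alpha_i\rangle/q(\alpha_i)$ to be divisible by $m_i = n/\gcd(n,q(\alpha_i))$. I will split into the imaginary and real cases and in each reduce to a divisibility statement about integers.

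\emph{Imaginary case.} Here $m_\alpha=n$. Write $\alpha=\sum_j k_j\alpha_j$ with $k_j\in\Z$. Since $\langle\alpha,\alpha_i^\vee\rangle=\sum_j k_j a_{ij}$ is an integer (it is $\alpha(h_i)$), $n\langle\alpha,\alpha_i^\vee\rangle$ is a multiple of $n$, and hence of $m_i\mid n$.

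\emph{Real case.} Set $d_i=q(\alpha_i)$, $d=q(\alpha)$, $b=\langle\alpha,\alpha_i\rangle\in\Z$. Both $\langle\alpha,\alpha_i^\vee\rangle=b/d_i$ and $\langle\alpha_i,\alpha^\vee\rangle=b/d$ are integers; the second holds because $\alpha^\vee$ is a real coroot, i.e.\ the $W$-image of a simple coroot, and $W$ preserves $P$ and pairings. So $\operatorname{lcm}(d,d_i)\mid b$. We must show
\[
\frac{n}{\gcd(n,d_i)} \;\Big|\; \frac{n}{\gcd(n,d)}\cdot\frac{b}{d_i}.
\]
Work one prime $p$ at a time, with $v=v_p$. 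The required inequality is
\[
v(n)-\min(v(n),v(d_i))\le v(n)-\min(v(n),v(d))+v(b)-v(d_i).
\]
Here $v(b)\ge\max(v(d),v(d_i))$. There are two cases:
\begin{itemize}
\item If $v(d)\ge v(n)$, the right side is at least $v(d)-v(d_i)\ge v(n)-v(d_i)$ when $v(d)\ge v(d_i)$, and at least $0$ otherwise. Comparing with the left side $\max(0,v(n)-v(d_i))$, the inequality holds.
\item If $v(d)<v(n)$, the right side is at least $v(n)-v(d)+\max(v(d),v(d_i))-v(d_i)\ge v(n)-v(d_i)$, and also $\ge0$. So again it dominates the left side.
\end{itemize}

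The main point to justify carefully is the integrality $b/d\in\Z$ for a real root $\alpha$. I would get it from $\alpha=\sigma\alpha_j$, which gives $d=d_j$ and
\[
\langle\alpha_i,\alpha^\vee\rangle=\langle\sigma^{-1}\alpha_i,\alpha_j^\vee\rangle\in\Z,
\]
since $\sigma^{-1}\alpha_i\in Q\subset P$ and the form is $W$-invariant. Once this is in place, the rest is the elementary valuation check above.
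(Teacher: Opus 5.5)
Your proof is correct. It rests on the same two integrality facts as the paper's proof: $\langle\alpha,\alpha_i^\vee\rangle\in\mathbb{Z}$ always, and $\langle\alpha_i,\alpha^\vee\rangle\in\mathbb{Z}$ for real $\alpha$. The difference lies in how the arithmetic is packaged and in what is made explicit.

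The paper writes $q(\alpha)=b_\alpha d_\alpha$ with $d_\alpha=\gcd(q(\alpha),n)$ and $n=m_\alpha d_\alpha$, and observes the identity $m_\alpha\langle\alpha,\alpha_i^\vee\rangle=m_i\cdot\frac{b_\alpha}{b_{\alpha_i}}\langle\alpha^\vee,\alpha_i\rangle$. It then concludes in a single step from $\gcd(b_{\alpha_i},m_i)=1$. You instead verify the divisibility one prime at a time using valuations. This is more mechanical, but it does not require spotting that identity.

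Your version is also more careful in two places.
\begin{itemize}
\item The paper's argument is phrased ``for arbitrary $\alpha$'' but does not literally apply to imaginary roots. There $m_\alpha=n$ rather than $n/\gcd(n,q(\alpha))$, and $q(\alpha)$ may be $0$, so $\alpha^\vee$ is undefined. You dispose of that case in one line using $m_i\mid n$.
\item The paper's coprimality step tacitly needs $b_\alpha\langle\alpha^\vee,\alpha_i\rangle\in\mathbb{Z}$, which comes from $\langle\alpha^\vee,\alpha_i\rangle\in\mathbb{Z}$ for real roots. You prove this explicitly via $\alpha=\sigma\alpha_j$ and $W$-invariance of the form.
\end{itemize}

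The only cosmetic point is the case $b=\langle\alpha,\alpha_i\rangle=0$. There your valuation inequality should be read with $v(0)=+\infty$, or you can note that the statement is then trivial.
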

\begin{proof}
Recall that $\alpha^\vee = \alpha/q(\alpha)$.
For arbitrary $\alpha$ we write $d_\alpha = \gcd(q(\alpha),n)$ so that $q(\alpha) = b_\alpha d_\alpha$, $n = m_\alpha d_\alpha$ and $\gcd(b_\alpha, m_\alpha) = 1$. Then
\begin{align*}
m_\alpha \langle \alpha, \alpha_i^{\vee}\rangle = \frac{b_\alpha}{b_{\alpha_i}}m_{i} \langle\alpha^\vee,\alpha_i\rangle.
\end{align*}
Since the left-hand side is an integer and $\gcd(b_{\alpha_i},m_i) = 1$ we see that $\frac{b_\alpha}{b_{\alpha_i}}\langle\alpha^\vee,\alpha_i\rangle$ is an integer, giving the desired result.
\end{proof}
Let $Q'\subset Q$ be the sublattice spanned by the set $\{ m_\alpha\alpha \}_{\a\in \Phi}$. By \cite[Lemma 2.1]{Lee2012WeylGM},
the lattice $Q'$ is invariant under $W$, and it coincides with the lattice spanned by $\{m_i \a_i\}_{i=1,\ldots, r}$. 
\begin{lemma}
\label{cor:pulloutmonomial}
    Suppose $\lambda \in Q'$ and let $f$ be a function on a $\sigma_i$-invariant Reinhardt domain. Then
    \[\left(\bx^{\lambda}f(\bx)\right)\vert_\omega^{\CG}\sigma_i = \bx^{\sigma_i\lambda}f(\bx)\vert_\omega^{\CG}\sigma_i.
    \]
\end{lemma}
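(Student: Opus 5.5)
The plan is to work directly from the signed-part formula \eqref{eq:CG_defsigned}, so the proof reduces to two facts about multiplying by $\bx^\lambda$ with $\lambda\in Q'$:
\begin{enumerate*}[(i)]
\item it preserves each signed part, i.e.\ $(\bx^\lambda f)_i^{(j)}=\bx^\lambda f_i^{(j)}$;
\item evaluating at $\sigma_i\bx$ turns $\bx^\lambda$ into $\bx^{\sigma_i\lambda}$.
\end{enumerate*}
Since the bracketed factor $\frac{x_i^{l_i+1}}{1-qx_i^{m_i}}\bigl[x_i^{-j}(1-q)-(x_i^{-m_i}-1)g_{q(\alpha_i)j}\bigr]$ does not depend on $f$, these two facts give the claim immediately once $\bx^\lambda$ is factored out of the sum over $j$.

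For (i), I use Definition~\ref{def:j_signed}. We have $\varepsilon_{i,\eta}(\bx^\lambda f)=\eta^{\langle\lambda,\alpha_i^\vee\rangle}\bx^\lambda\, f(\varepsilon_{i,\eta}\bx)$, where $\eta\in\mu_{m_i}$. The claim then needs $\langle\lambda,\alpha_i^\vee\rangle\equiv 0\pmod{m_i}$ for every $\lambda\in Q'$. By \cite[Lemma 2.1]{Lee2012WeylGM}, as recalled above, $Q'$ is spanned by the vectors $m_\alpha\alpha$, so it is enough to check this on generators. That is exactly Lemma~\ref{lem:malphasign}, and the congruence extends to all of $Q'$ by linearity. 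Hence $\eta^{\langle\lambda,\alpha_i^\vee\rangle}=1$ for all $\eta\in\mu_{m_i}$, so $\bx^\lambda$ factors out of the average \eqref{eq:signedpart}. The same argument shows $\bx^\lambda f$ lives on the same $\sigma_i$-invariant Reinhardt domain as $f$, so the formula applies to it.

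For (ii), the action on monomials gives $(\bx^\lambda)(\sigma_i\bx)=\bx^{\sigma_i^{-1}\lambda}=\bx^{\sigma_i\lambda}$, because $\sigma_i$ is an involution. Combining this with (i), each summand of \eqref{eq:CG_defsigned} for $\bx^\lambda f$ equals $\bx^{\sigma_i\lambda}$ times the corresponding summand for $f$, which proves the lemma.

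As a sanity check, the identity can also be verified on power series termwise through the monomial definition of the action. For a monomial $\bx^\mu$, replacing $\mu$ by $\mu+\lambda$ shifts $\delta_i(\mu)$ by $-\langle\lambda,\alpha_i^\vee\rangle$, which is a multiple of $m_i$. Under this shift both $r_{m_i}(\delta_i)$ and $g_{q(\alpha_i)\delta_i}$ are unchanged. For the latter one uses that $q(\alpha_i)m_i=\mathrm{lcm}(q(\alpha_i),n)$ is divisible by $n$. Meanwhile $\sigma_i(\mu+\lambda)=\sigma_i\mu+\sigma_i\lambda$.

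The only step with real content is the divisibility $m_i\mid\langle\lambda,\alpha_i^\vee\rangle$ on all of $Q'$. This rests on Lemma~\ref{lem:malphasign} together with the description of $Q'$ by generators. Everything else is bookkeeping, including the minor convention point that $\sigma\bx^\lambda=\bx^{\sigma^{-1}\lambda}$, which is harmless here.
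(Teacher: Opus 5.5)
Your proof is correct and follows the paper's own argument. Lemma~\ref{lem:malphasign}, extended to $Q'$ by linearity, gives $\varepsilon_{i,\eta}\bx^\lambda=\bx^\lambda$, so multiplication by $\bx^\lambda$ commutes with taking $j$-signed parts, and the claim then follows from \eqref{eq:CG_defsigned}; your termwise check on monomials is a harmless extra confirmation.
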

\begin{proof}
It follows from Lemma~\ref{lem:malphasign} that $\langle \lambda, \alpha_i^{\vee}\rangle \equiv 0 \mod{m_i}$ so that 
\[
\varepsilon_{i,\eta}\bx^\lambda = \bx^\lambda
\]
for all $\eta \in \mu_{m_i}$. Thus
\begin{align*}
    (\bx^\lambda f(\bx))^{(j)}_i = \bx^\lambda f(\bx)^{(j)}_i
\end{align*}
for all $0 \leq j \leq m_i-1$, and the claim follows from~\eqref{eq:CG_defsigned}.
\end{proof}
 For a Laurent series
$f=\sum_{\lambda \in Q}a_\lambda \bx^{\lambda}$, define $\Supp(f) = \{\lambda \mid a_\lambda \neq 0\}$.
\begin{lemma}
\label{lem:supp}
\begin{enumerate}[a)]
\item 
We have
\begin{align*}
1\vert_\omega\sigma(\bx) = \bx^{\theta - \sigma^{-1}\theta}\frac{P_\sigma(\bx)}{\prod_{\alpha\in\Phi(\sigma)}(1-q\bx^{m_\alpha\alpha})}
\end{align*}
where $P_\sigma(\bx)$ is a polynomial in $\bx$ with $\Supp(P_\sigma) \subseteq \left\{\sum_{\alpha \in \Phi(\ss)} n_{\alpha} \alpha \mid n_\alpha \in\mathbb{N}\right\} $.
\item 
If $\bx\in\C^r$ with $|x_i|\le 1$ for all $i$, then we have $|P_\sigma(\bx)|\leq (3K)^{l(\sigma)}$, where $K = \max_i\{|g_i|,q\}$.
\end{enumerate}
\end{lemma}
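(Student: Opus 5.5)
We argue by induction on the length $\ell(\sigma)$, using a reduced expression and the cocycle structure of the action. For $\sigma=1$ the statement is trivial: $P_1=1$, $\Phi(1)=\emptyset$ and $\theta-\theta=0$.

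\textbf{Base computation.} We first handle a single simple reflection applied to a monomial. Take $\lambda\in Q$ and set $d=\delta_i(\lambda)=l_i+1-\la\lambda,\a_i^\vee\ra$. Since $\sigma_i\lambda+(l_i+1)\a_i=\lambda+d\a_i$, the definition gives
\[
\bx^\lambda\vert_\omega\sigma_i=\frac{-\bx^{\lambda+d\a_i}}{1-qx_i^{m_i}}\Bigl[x_i^{m_i-r_{m_i}(d)}(1-q)-(1-x_i^{m_i})g_{q(\a_i)d}\Bigr].
\]
The bracket is a polynomial in $x_i$ of degree at most $m_i$, with at most three terms and coefficients bounded by $K+1\le 2K$ in absolute value. (When $r_{m_i}(d)=0$ the exponent $m_i-r$ equals $m_i$, which is harmless.) For $\lambda=0$ we have $d=\la\theta,\a_i^\vee\ra$, so $d\a_i=\theta-\sigma_i\theta$. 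This gives (a) and (b) for $\sigma=\sigma_i$, with $\Phi(\sigma_i)=\{\a_i\}$ and support in $\N\a_i$.

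\textbf{Inductive step.} Write $\sigma=\sigma'\sigma_i$ with $\ell(\sigma)=\ell(\sigma')+1$, or the opposite order, whichever matches how the action composes; this is a right action, so $1\vert\sigma=(1\vert\sigma')\vert\sigma_i$. By induction,
\[
1\vert_\omega\sigma'=\bx^{\theta-\sigma'^{-1}\theta}\frac{P_{\sigma'}}{\prod_{\a\in\Phi(\sigma')}(1-q\bx^{m_\a\a})}.
\]
Each denominator factor $\bx^{m_\a\a}$ lies in $Q'$, so by Lemma~\ref{cor:pulloutmonomial} (applied to the inverse, a power series in $Q'$-monomials on a small $\sigma_i$-invariant Reinhardt domain) $\sigma_i$ passes through the denominator. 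It transforms it into $\prod_{\a\in\Phi(\sigma')}(1-q\bx^{m_\a\sigma_i\a})$, and the $\sigma_i\a$ for $\a\in\Phi(\sigma')$ together with $\a_i$ form $\Phi(\sigma)=\sigma_i\Phi(\sigma')\cup\{\a_i\}$ (note $m_{\sigma_i\a}=m_\a$). The extra factor $1-qx_i^{m_i}$ comes from the base computation.

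For the numerator, write $P_{\sigma'}=\sum_\mu p_\mu\bx^\mu$ and apply the monomial formula to each $\bx^{\theta-\sigma'^{-1}\theta+\mu}$. The shift is $d\a_i$ with
\[
d=\la\theta,\a_i^\vee\ra-\la\theta-\sigma'^{-1}\theta+\mu,\a_i^\vee\ra+\text{const}.
\]
Using $\sigma_i\sigma'^{-1}\theta$ bookkeeping, the resulting exponent equals $\theta-\sigma^{-1}\theta+\sigma_i\mu+(\text{nonneg. multiple of }\a_i)$. More precisely, the monomial becomes $\bx^{\sigma_i(\theta-\sigma'^{-1}\theta+\mu)+(l_i+1)\a_i}$ times a polynomial in $x_i$, and $\sigma_i(\theta-\sigma'^{-1}\theta)+(l_i+1)\a_i=\theta-\sigma^{-1}\theta$ since $\sigma_i\theta=\theta-(l_i+1)\a_i$. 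Since $\mu\in\sum_{\a\in\Phi(\sigma')}\N\a$, we get $\sigma_i\mu\in\sum_{\beta\in\sigma_i\Phi(\sigma')}\N\beta$. Adding the factor $x_i^{k}$ with $k\ge0$ keeps the total in $\sum_{\beta\in\Phi(\sigma)}\N\beta$, because $\a_i\in\Phi(\sigma)$.

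\textbf{Main obstacle.} The expected obstacle is the exponent: the nonneg. multiple of $\a_i$ must exactly compensate $\sigma_i$ acting on $\mu$. Terms with $\la\mu,\a_i^\vee\ra>0$ produce $\sigma_i\mu=\mu-\la\mu,\a_i^\vee\ra\a_i$, which could have a negative $\a_i$ coefficient. This is resolved by expressing things in the $\sigma_i\Phi(\sigma')$ basis rather than in simple roots: the support is described as $\N$-combinations of the roots in $\Phi(\sigma)$, not of simple roots, so $\sigma_i\mu$ is automatically an $\N$-combination of $\sigma_i\Phi(\sigma')$ and no positivity in simple coordinates is needed. One still has to check that polynomiality holds, i.e.\ that no negative powers of $\bx$ arise. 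This follows because the final expression is a finite sum of monomials whose exponents lie in this $\N$-cone inside $Q^+$, all roots of $\Phi(\sigma)$ being positive.

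\textbf{Bound (b).} Each step multiplies the numerator by a bracket with at most three terms, each of absolute value at most $2K$ when $|x_i|\le1$. But the monomial factors $\bx^{\sigma_i\mu}$ vary, so we cannot simply bound $|P|$ multiplicatively. Instead we bound the $\ell^1$ norm of the coefficients: for $|x_i|\le1$ one has $|P_\sigma(\bx)|\le\|P_\sigma\|_1$, and $\|P_\sigma\|_1\le 3K\|P_{\sigma'}\|_1$ with the bracket coefficients estimated as $|1-q|\le K$, $|g|\le K$ and $|g|\le K$. Iterating gives $\|P_\sigma\|_1\le(3K)^{\ell(\sigma)}$, which proves (b).
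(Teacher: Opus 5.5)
Your proof is correct and follows the paper's argument essentially step for step: induction on length via $1\vert_\omega\sigma'\sigma_i=(1\vert_\omega\sigma')\vert_\omega\sigma_i$, pulling the $Q'$-denominator through $\sigma_i$ by Lemma~\ref{cor:pulloutmonomial}, the identity $\sigma_i(\theta-\sigma'^{-1}\theta)+(l_i+1)\alpha_i=\theta-\sigma^{-1}\theta$, and $\Phi(\sigma'\sigma_i)=\{\alpha_i\}\cup\sigma_i\Phi(\sigma')$. For (b), your estimate $\|P_\sigma\|_1\le 3K\|P_{\sigma'}\|_1$ is the same idea as the paper's ``at most $3^{l(\sigma)}$ terms with coefficients at most $K^{l(\sigma)}$''. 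Your version is stated more carefully, since it is unaffected by like terms combining.
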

\begin{proof}
We induct on $l(\sigma)$, noting that the base case $\sigma = 1$ is trivial. Thus let $l(\sigma\sigma_i)=l(\sigma)+1$
and assume the statement is true for $\ss$.

From Lemma~\ref{lem:malphasign} we have $\langle m_\alpha \alpha, \alpha_i^{\vee}\rangle \equiv 0 \mod m_i$, and the induction hypothesis gives 
\begin{align*}
1\vert_\omega\sigma\sigma_i  =
\frac{\big( \bx^{\theta - \sigma^{-1}\theta}P_\sigma(\bx) \big)\vert_\omega\sigma_i}{\prod_{\alpha\in\sigma_i\Phi(\sigma)}(1-q\bx^{m_\alpha\alpha})}.
\end{align*}
Let $P_\sigma(\bx)=\sum a_\lam \bx^\lam$.
From the definition of the Chinta-Gunnells action together with Lemma~\ref{cor:pulloutmonomial} we have 
\begin{align}
\label{eq:actiononpolynomial}
1\vert_\omega\sigma\sigma_i = \frac{\sum_{\lambda \in \Supp(P_\sigma)} a_\lambda\bx^{\sigma_i(\lam')+(l_i+1)\alpha_i}\left[x_i^{m_i-r_{m_i}(\dd_i(\lam'))}(q-1)+(1-x_i^{m_i})g_{q(\alpha_i)\delta_i(\lambda')}\right]}{\prod_{\alpha\in\Phi(\sigma\sigma_i)}(1-q\bx^{m_\alpha\alpha})},
\end{align}
where $\lam'=\theta - \sigma^{-1}\theta+\lambda$.
Note that the term in square brackets is a polynomial in $x_i$, and that
\begin{align*}
\sigma_i(\theta - \sigma^{-1}\theta+\lambda)+(l_i+1)\alpha_i &=\theta-\sigma_i\sigma^{-1}\theta+\sigma_i\lambda. 
\end{align*}
The induction hypothesis shows that
$\sigma_i\lambda\in \left\{\sum_{\alpha \in \ss_i\Phi(\ss)} n_{\alpha} \alpha \mid n_\alpha \in\mathbb{N}\right\}$, and the fact that $\Phi(\sigma\sigma_i) = \{\alpha_i\}\cup\sigma_i\Phi(\sigma)$ finishes the proof of (a).

To prove $(b)$, since $|x_i|\leq1$ we may bound $|P_\sigma(\bx)|$ by the sum of
the absolute values of its coefficients. In particular we obtain an upper bound on
$|P_\sigma(\bx)|$ by multiplying the absolute value of the largest coefficient by the number of terms. The result will then follow immediately from the claim that $P_\sigma$ has at most $3^{l(\sigma)}$ terms and has largest coefficient of absolute value at most $K^{l(\ss)}$.

We induct as above - the base case is trivial.
Examining the quantity inside the square brackets in (\ref{eq:actiononpolynomial}) we see that the coefficient $a_\lambda$ increases at most by a factor of $K$ and that at most $3$ new terms are introduced for each $\lambda$. This completes the proof of the claim and thus of $(b)$.
\end{proof}

\begin{lemma}
\label{lem:convergenceconstant}
Let $M \in \mathbb{R}$ be a constant. The series
\begin{align*}
\sum_{\sigma\in W} M^{l(\sigma)}e^{(\theta - \sigma^{-1}\theta)(-\bs)}
\end{align*}
converges absolutely to a holomorphic function for $\Real(\bs) \in C^\circ\cup \cup_j H_j$, where
\begin{align*}
   H_j = \{ \bs \in \mathfrak{h}_\mathbb{R}\mid s_j = 0, s_i \geq 0 \text{ for }i\neq j\} .
\end{align*}
\end{lemma}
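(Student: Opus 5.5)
The plan is to exploit the fact that $\theta=\omega+\rho$ is strongly dominant, so that $\theta-\sigma^{-1}\theta$ grows at least linearly in $l(\sigma)$ when paired with $\Real(\bs)$.

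\textbf{Step 1: a lower bound for $(\theta-\sigma^{-1}\theta)(\bs)$.} The standard formula
\[
\theta-\sigma^{-1}\theta=\sum_{\alpha\in\Phi(\sigma)}\langle\theta,\alpha^\vee\rangle\,\alpha
\]
will be verified by induction on length. If $l(\sigma\sigma_i)=l(\sigma)+1$, then
\[
\theta-\sigma_i\sigma^{-1}\theta=\langle\theta,\alpha_i^\vee\rangle\alpha_i+\sigma_i(\theta-\sigma^{-1}\theta),
\]
and $\Phi(\sigma\sigma_i)=\{\alpha_i\}\cup\sigma_i\Phi(\sigma)$, as in the proof of Lemma~\ref{lem:supp}. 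Since $\theta\in P^{++}$ and each $\alpha\in\Phi(\sigma)$ is a positive real root, the coefficient $\langle\theta,\alpha^\vee\rangle$ is a positive integer. Writing $\alpha=\sum_j k_j\alpha_j$ with $k_j\ge 0$, we then get
\[
\Real\,(\theta-\sigma^{-1}\theta)(\bs)\;\ge\;\sum_{\alpha\in\Phi(\sigma)}\alpha(\Real\bs).
\]

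\textbf{Step 2: the interior case $\Real\bs\in C^\circ$.} Here every $s_i$ has positive real part. Since $\alpha(\Real\bs)\ge\min_i\Real s_i=:\epsilon>0$, the bound in Step~1 gives
\[
\Real(\theta-\sigma^{-1}\theta)(\bs)\ge\epsilon\, l(\sigma).
\]
We also have $\#\{\sigma:l(\sigma)=k\}\le r^k$. So the series is dominated by $\sum_k (r|M|e^{-\epsilon})^k$. This is not yet enough, because $\epsilon$ may be small. To fix this I would sharpen the bound using the height: each positive real root has $\alpha(\Real\bs)\ge \height(\alpha)\,\epsilon$, and the roots in $\Phi(\sigma)$ are distinct. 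Distinct positive roots have at most $r^h$ elements of height $h$, so $\sum_{\alpha\in\Phi(\sigma)}\height(\alpha)\gtrsim l(\sigma)\log l(\sigma)/\log r$, which grows superlinearly in $l(\sigma)$. Hence the terms decay faster than any exponential $C^{l(\sigma)}$, and the series converges absolutely. The convergence is uniform on compact subsets, so the sum is holomorphic there.

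\textbf{Step 3: the walls $H_j$.} On $H_j$ we have $\Real s_j=0$, so the term $\alpha_j$ itself contributes nothing. Every other root in $\Phi(\sigma)$, however, has some $k_i>0$ with $i\ne j$: the only positive real root supported on $\{j\}$ is $\alpha_j$, and $\Phi(\sigma)$ contains no repeated roots. Also $\height(\alpha)\le k_j+\sum_{i\neq j}k_i$, and for a real root $k_j$ is controlled by $\sum_{i\ne j}k_i$; this should follow from the real roots being $W$-conjugates of simple roots, via $\langle\alpha,\alpha_j^\vee\rangle$ estimates. Granting this, the Step~2 estimate persists with $\epsilon=\min_{i\neq j}\Real s_i$, after losing at most one root.

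The \textbf{main obstacle} is making the superlinear growth of $\sum_{\alpha\in\Phi(\sigma)}\alpha(\Real\bs)$ rigorous and uniform, especially on the walls. If that step proves awkward, I would fall back on a cruder but simpler statement: it suffices to show that $\#\{\alpha\in\Phi^+_{\rm re}:\alpha(\Real\bs)\le T\}$ grows subexponentially in $T$, or at least at an exponential rate that can be made small. Absolute convergence of $\sum_{\alpha\in\Phi^+}\mult(\alpha)q^{-\alpha(\bs)}$ (Kac, Prop.~10.6, as used in Lemma~\ref{lem:ddeltaconvergence}) gives such counting information on $C^\circ$.
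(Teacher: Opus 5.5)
Your strategy is the paper's: bound $\Real(\theta-\sigma^{-1}\theta)(\bs)$ from below by $\sum_{\alpha\in\Phi(\sigma)}\alpha(\Real\bs)$, count roots of bounded height to get superlinear growth in $l(\sigma)$, beat the $r^k$ count of elements of length $k$, and on $H_j$ pass from $\height$ to $\height_j$. Your crude count $r^h$ gives growth $k\log k$ rather than the paper's $k^{1+1/r}$ (from $(h+1)^r$). Either suffices, so the fallback paragraph is not needed. Two steps need repair, however.

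First, the identity in Step~1 is false. Take type $A_2$ with $\sigma^{-1}=\sigma_1\sigma_2$, so that $\Phi(\sigma)=\{\alpha_1,\alpha_1+\alpha_2\}$, and write $a_i=\langle\theta,\alpha_i^\vee\rangle$. Then $\theta-\sigma^{-1}\theta=a_1\alpha_1+a_2(\alpha_1+\alpha_2)$, whereas your formula gives $a_1\alpha_1+(a_1+a_2)(\alpha_1+\alpha_2)$; it fails even for $\theta=\rho$. Your induction does not close. Transporting the old terms by $\sigma_i$ puts the coefficient $\langle\theta,(\sigma_i\beta)^\vee\rangle=\langle\sigma_i\theta,\beta^\vee\rangle$ on $\beta\in\sigma_i\Phi(\sigma)$, not $\langle\theta,\beta^\vee\rangle$. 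What you actually use is weaker: $\theta-\sigma^{-1}\theta=\sum_{\alpha\in\Phi(\sigma)}c_\alpha\alpha$ with $c_\alpha\in\mathbb{Z}_{\ge 1}$. Your induction does prove this, since the new coefficient is $\langle\theta,\alpha_i^\vee\rangle\ge 1$. It is the paper's telescoping formula $\sum_j\langle\theta,\alpha_{i_j}^\vee\rangle\beta_j$ along a reduced word.

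Second, in Step~3 the inequality controlling $k_j$ is only granted, yet it is the whole content of the wall case. It does not come from conjugacy of real roots to simple roots. The paper's argument uses that $\sigma_j$ permutes $\Phi^+\setminus\{\alpha_j\}$. For $\alpha=\sum_i k_i\alpha_i\in\Phi^+\setminus\{\alpha_j\}$, the $\alpha_j$-coefficient of $\sigma_j\alpha=\alpha-\langle\alpha,\alpha_j^\vee\rangle\alpha_j$ is $-k_j-\sum_{i\ne j}k_i\langle\alpha_i,\alpha_j^\vee\rangle$, and this must be $\ge 0$. Hence $k_j\le B\,\height_j(\alpha)$ with $B=\max_{i\ne j}(-\langle\alpha_i,\alpha_j^\vee\rangle)$, i.e.\ $\height(\alpha)\le (B+1)\height_j(\alpha)$. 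With this, your Step~2 estimate goes through on $H_j$. Take $\epsilon=\min_{i\ne j}\Real s_i$, include an extra factor $(B+1)^{-1}$, and discard the single root $\alpha_j$.
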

\begin{proof}
Without loss of generality we may assume $\bs = \Real(\bs)$. We first show absolute convergence at points $\bs \in C^\circ$ and $\bs \in H_j$ individually. Thus suppose $\bs \in C^\circ$.

Let $\sigma \in W$. We can write $\sigma^{-1} = \sigma_{i_1} \dots \sigma_{i_k}$ as a reduced word so that $\Phi(\sigma) = \{ \beta_1, \dots , \beta_k\}$ with $\beta_j = \sigma_{i_1}\dots\sigma_{i_{j-1}}\alpha_{i_j}$ and $\beta_1 = \alpha_{i_1}$. Let $h_1 \leq \dots\leq h_k$ be the ordered heights of the $\beta_j$, counted with multiplicities. That is, we take the elements of the multiset $\{\height(\beta_j)\}$ and order them by size. Let $n_h = |\{\alpha \in \Phi^+\mid \height(\alpha)\leq h\}|$ be the number of positive roots with height at most $h$. We now have
\begin{align*}
j\leq n_{h_j}\leq (h_j + 1)^r
\end{align*}
where the first inequality comes from the definition of $h_j$ and the second comes from the naive bound $n_h \leq (h+1)^r$. In particular we see that $h_j \ge j^{1/r}-1$.

We now consider the quantity $(\theta - \sigma^{-1}\theta)(\bs)$. We have
\begin{align*}
\theta-\sigma^{-1}\theta = \sum_{j=1}^k\langle \theta, \alpha_{i_j}^\vee\rangle\beta_j 
\end{align*} 
by considering the telescoping sum $\theta - \sigma^{-1}\theta = \sum_{j=1}^k \sigma_{i_1}\cdots\sigma_{i_{j-1}}\theta - \sigma_{i_1}\cdots\sigma_{i_j}\theta$. Since $\theta \in P^{++}$ we obtain the bound
\begin{align*}
(\theta - \sigma^{-1}\theta)(\bs)\geq \sum_{i=1}^k\beta_i (\bs).
\end{align*}

Since $\bs \in C^\circ$ we have $S \in \mathbb{R}_{>0}$ such that $s_i > S$ for all $1\leq i\leq r$.

Thus
\begin{align}
\label{eq:heightbound}
(\theta - \sigma^{-1}\theta)(\bs)\geq S\sum_{i=1}^k\height(\beta_i)\geq S\sum_{i=1}^k(i^{1/r}-1)
\end{align}
where the second inequality comes from the discussion above.

Comparing this sum to the integral of $x^{1/r}$ we obtain
\begin{align}
\label{eq:finalafterintegral}
(\theta - \sigma^{-1}\theta)(\bs) \geq S\left(\frac{k^{1+\frac{1}{r}}}{1+\frac{1}{r}}-k\right).
\end{align}

We now have
\begin{align*}
\sum_{\sigma\in W} M^{l(\sigma)}e^{(\theta - \sigma^{-1}\theta)(-\bs)} \leq \sum_{k=0}^\infty M^kr^ke^{-ak^{1+\frac{1}r}+Sk}.
\end{align*}
Here the inequality comes from summing over $k=l(\sigma)$, applying the naive upper bound $r^k$ on the number of $\sigma \in W$ of length $k$, and applying (\ref{eq:finalafterintegral}) with $a = \frac{S}{1+\frac{1}{r}}$. Absolute convergence follows by comparison to a geometric series since $k^{1+\frac{1}{r}}$ grows faster than any linear function in $k$.

We now suppose that $\bs \in H_j$. Let $\alpha =\sum c_i \alpha_i \in \Phi^+_{\text{re}}$ and define $\height_j(\sum c_i \alpha_i)=\sum_{i\neq j}c_i$. We claim that, for $\alpha \neq \alpha_j$, there exists some positive constant $B \in \mathbb{R}$ such that
\begin{align*}
    \height_j(\alpha) > \frac{1}{B+1}\height(\alpha).
\end{align*}

To prove this we note that $\sigma_j\alpha \in \Phi^+_{\text{re}}$ since $\alpha \neq \alpha_j$. In particular, by considering the coefficient of $\alpha_j$ in $\sigma_j\alpha$, we see that
\begin{align*}
-\sum_{i\neq j}c_i \langle\alpha_i,\alpha_j^\vee\rangle > c_j.
\end{align*}
Setting $B = \max_{i\neq j}-\langle \alpha_i,\alpha_j^\vee\rangle$ and noting that $c_j = \height(\alpha) - \height_j(\alpha)$ proves the claim.

Now as above we assume $\bs = \Real(\bs)$. Then, since $\bs \in H_j$, we have $S_j\in \mathbb{R}_{> 0}$ such that $s_i>S_j$ for all $i\neq j$.  By \eqref{eq:heightbound} we have
\begin{align*}
(\theta - \sigma^{-1}\theta)(\bs)\geq S_j\sum_{i=1}^k\height_j(\beta_i)\geq \frac{S_j}{B+1}\sum_{i=1}^{k-1}(i^{1/r}-1),
\end{align*}
and the rest of the proof follows as above.

Finally, to show convergence to a holomorphic function we note that for a given neighbourhood of a point $\bs \in C^\circ$ (resp. $\bs \in H_j$) the speed of convergence depends only on the constant $S$ (resp. $S_j$) which itself depends only on the neighbourhood. Thus the series
converges locally uniformly giving the desired result.
\end{proof}

\begin{theorem}
\label{thm:convergence}
Fix a twisting parameter $\omega \in P^+$. Then the series defining $DZ_\omega(q^{-\bs};q)$ can be analytically continued to a holomorphic function on the interior of the complexified Tits cone $\XC^\circ$.
\end{theorem}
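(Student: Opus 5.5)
The plan is to prove holomorphy of $DZ_\omega(q^{-\bs};q)$ on the tube over an open neighbourhood $U$ of $C^\circ\cup\bigcup_j H_j$. We then propagate it to the tubes over the translates $\sigma U$ using the $W$-invariance of $Z_\omega$, and finally fill in all of $\XC^\circ$ with Bochner's tube theorem. The interior of the Tits cone is convex, and $\bigcup_{\sigma}\sigma U$ is a connected open set whose convex hull is $X^\circ$.

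\textbf{Step 1 (the fundamental chamber).} By Definition~\ref{def:cwaverage}, $\Delta Z_\omega=\sum_{\sigma\in W}1\vert_\omega\sigma$. Lemma~\ref{lem:supp}(a) gives
\[
DZ_\omega=\frac{1}{\Delta}\sum_{\sigma\in W}\bx^{\theta-\sigma^{-1}\theta}P_\sigma(\bx)\prod_{\alpha\in\Phi^+_{\text{re}}\setminus\Phi(\sigma)}(1-q\bx^{m_\alpha\alpha}).
\]
For $\Real(\bs)\in C^\circ$ we have $|\bx^\alpha|<1$ for every $\alpha\in\Phi^+$. Lemma~\ref{lem:ddeltaconvergence} then bounds the remaining product of $D$ uniformly, as a tail of an absolutely convergent product. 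Lemma~\ref{lem:supp}(b) gives $|P_\sigma|\le(3K)^{l(\sigma)}$. Lemma~\ref{lem:convergenceconstant} with $M=3K$ therefore yields locally uniform absolute convergence of the sum. Since $\Delta$ does not vanish there (all factors satisfy $|\bx^{m_\alpha\alpha}|<1$), $DZ_\omega$ is holomorphic on the tube over $C^\circ$.

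To reach a neighbourhood of $H_j$, I would reprove Lemma~\ref{lem:convergenceconstant} allowing $\Real s_j\ge-\epsilon$. The key bound is $\height_j(\alpha)>\height(\alpha)/(B+1)$ for positive real roots $\alpha\ne\alpha_j$. It shows that $\beta(\bs)\ge c\,\height(\beta)$ still holds for all $\beta\ne\alpha_j$ when $\epsilon$ is small, so the same estimate goes through. This requires $s_i\ge S_j>0$ for $i\ne j$, so the neighbourhood is taken near the relative interior of $H_j$. Near the faces where several $s_i$ vanish, Bochner's theorem will still fill in. The products over $\Phi^+_{\text{re}}\setminus\Phi(\sigma)$ and the factor $\Delta_j:=\Delta/(1-x_j^{m_j})$ stay bounded, respectively nonvanishing, there by the same height comparison.

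\textbf{Step 2 (the wall $H_j$, main obstacle).} Near $H_j$ the only possible singularity is the factor $1/(1-x_j^{m_j})$ of $1/\Delta$. It vanishes where $q^{-m_js_j}=1$, which occurs for $\Real s_j=0$, exactly on the tube over $H_j$. The factors $1-q\bx^{m_\alpha\alpha}$ in denominators have been cancelled against $D$, so they cause no trouble. I therefore need the holomorphic numerator $N=\sum_\sigma 1\vert_\omega\sigma$ to vanish on $\{x_j^{m_j}=1\}$.

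The first attempt is to pair $\sigma$ with $\sigma_j\sigma$: the terms $1\vert_\omega\sigma$ and $(1\vert_\omega\sigma)\vert_\omega\sigma_j$ together give $f+f\vert_\omega\sigma_j$. In the signed-part form \eqref{eq:CG_defsigned}, multiplied by $-x_j^{m_j}$, the operator $\vert_\omega\sigma_j$ on each signed piece $f^{(k)}_j$ evaluated at $x_j^{m_j}=1$ reduces to $-x_j^{l_j+1-k}f^{(k)}_j(\sigma_j\bx)\cdot x_j^{m_j}(1-q)/(1-q)$. On $x_j^{m_j}=1$ the reflection $\sigma_j$ acts on each signed piece as a monomial twist, and this should make $f+f\vert_\omega\sigma_j$ vanish there. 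This is the step I expect to be delicate: I must check the exponent bookkeeping with $\delta_j$ and the roots of unity. If the termwise check is awkward, a fallback is the anti-invariance $N\vert_\omega\sigma_j=N$ together with the relation $\Delta(\sigma_j\bx)=-x_j^{-m_j}\Delta(\bx)$ up to the $\sigma_j$-invariant factor $\Delta_j$. From these one extracts that $N$ restricted to $x_j^{m_j}=1$ equals its own negative.

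\textbf{Step 3 (propagation and tube theorem).} Since $Z_\omega$ is invariant under $\vert^{\CG}_\omega\sigma_i$, and $D(\sigma_i\bx)=D(\bx)\,(1-qx_i^{-m_i})/(1-qx_i^{m_i})$ by Lemma~\ref{cor:pulloutmonomial} and the fact that $\sigma_i$ permutes $\Phi^+_{\text{re}}\setminus\{\alpha_i\}$, formula \eqref{eq:CG_defsigned} expresses $DZ_\omega$ at $\sigma_i\bs$ as a finite combination of signed parts of $DZ_\omega$ at $\bs$. These signed parts are holomorphic on the $\varepsilon_{i,\eta}$-invariant Reinhardt tube, and the combination carries polynomial coefficients in $x_i^{\pm1}$; the denominators $1-qx_i^{m_i}$ cancel against the rescaled $D$. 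By induction on $l(\sigma)$ this gives holomorphy on the tube over $\sigma U$ for every $\sigma\in W$, and hence over $\bigcup_\sigma\sigma U$. Bochner's tube theorem then extends $DZ_\omega$ holomorphically to the tube over the convex hull, which is $X^\circ$, so to $\XC^\circ$. The extension agrees with the series on the original domain of convergence.
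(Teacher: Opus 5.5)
Your outline is the paper's proof, in a slightly different order: the estimate on the fundamental chamber from Lemmas~\ref{lem:supp} and~\ref{lem:convergenceconstant}, the walls $H_j$, vanishing on $\{x_j^{m_j}=1\}$, propagation by $W$-invariance, and Bochner. Two of your choices improve on the paper. Clearing the denominators of $1\vert_\omega\sigma$ against $D$ termwise gives a bound uniform in $\sigma$, without the paper's separate treatment of the zeros of $D$. You also rightly work on an open neighbourhood of the relative interiors of the $H_j$. Step~2 is fine once written out, and it needs no pairing (which in any case would be $\sigma\leftrightarrow\sigma\sigma_j$, not $\sigma_j\sigma$). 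On $x_j^{m_j}=1$ one has $\sigma_j\bx=\varepsilon_{j,x_j^{-1}}\bx$, so $\bx^\lambda\vert_\omega\sigma_j=-\bx^\lambda$ for every $\lambda$, and invariance gives $N=-N$ there. This is exactly the paper's last line.

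\textbf{The gap is in Step~3:} the claimed cancellation of $1-qx_i^{m_i}$ does not happen. Combining \eqref{eq:CG_defsigned} at $\sigma_i\bx$ with $D(\sigma_i\bx)=D(\bx)(1-qx_i^{-m_i})/(1-qx_i^{m_i})$ gives
\[
(DZ_\omega)(\sigma_i\bx)=\frac{x_i^{-(l_i+1)}}{1-qx_i^{m_i}}\sum_{k=0}^{m_i-1}(DZ_\omega)^{(k)}_i(\bx)\bigl[x_i^{k}(1-q)-(x_i^{m_i}-1)g_{q(\alpha_i)k}\bigr].
\]
Only $1-qx_i^{-m_i}$ cancels. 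The surviving factor vanishes on the tube over $\{\Real s_i=1/m_i\}$, which cuts through $C^\circ$, and its image in $\sigma_iU$ does not lie in the zero set of $D$. So you only get a meromorphic continuation with a possible polar divisor.

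The needed vanishing of the numerator is not formal. For $A_1$, $n=2$, $\omega=0$, $g_1=u$, $q=u^2$, one has $DZ_0=1+ux_1$. At $x_1=-u^{-1}$ the bracket is nonzero, and it is $DZ_0$ that vanishes. The paper's corresponding sentence, which identifies the images of the poles of $Z^*$ with zeros of $D(q^{-\sigma_i\bs})$, passes over the same hyperplane, so it cannot simply be cited.

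One way to close the gap is termwise. Each
\[
D(\sigma_i\bx)\,(1\vert_\omega\tau)(\sigma_i\bx)=(\sigma_i\bx)^{\theta-\tau^{-1}\theta}P_\tau(\sigma_i\bx)\prod_{\beta\in\Phi^+_{\text{re}}\setminus\Phi(\tau)}\bigl(1-q\bx^{m_\beta\sigma_i\beta}\bigr)
\]
has no denominator. Moreover $\sigma_i(\theta-\tau^{-1}\theta)=\theta-(\tau\sigma_i)^{-1}\theta-\langle\theta,\alpha_i^\vee\rangle\alpha_i$. The induction in Lemma~\ref{lem:supp} shows that each monomial of $P_\tau$ is $\prod_{\beta\in\Phi(\tau)}\bx^{n_\beta\beta}$ with $0\le n_\beta\le m_\beta$. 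Hence at $\sigma_i\bx$ only $\beta=\alpha_i$ can contribute modulus greater than $1$, and at most $|x_i|^{-m_i}$. Your Step~1 estimate, reindexed by $\tau\sigma_i$, then gives locally uniform convergence of $\sum_\tau D(\sigma_i\bx)(1\vert_\omega\tau)(\sigma_i\bx)$.

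The signed-part formula sends $1\vert_\omega\tau$ to $(1\vert_\omega\tau\sigma_i)(\sigma_i\bx)$. So this series is exactly the continuation of $DZ^*_\omega=\Delta\cdot DZ_\omega$, and it is now visibly holomorphic. The zeros of $\Delta(\sigma_i\bx)=-x_i^{-m_i}\Delta(\bx)$ are then handled by Step~2, since $\{x_j^{m_j}=1\}$ is stable under the twists $\varepsilon_{i,\eta}$ (Lemma~\ref{lem:malphasign}). The same estimate works directly on every $\sigma C^\circ$, which makes the induction on $l(\sigma)$ unnecessary.
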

\begin{proof}
To simplify notation we write $Z$ and $Z^*$ for $Z_\omega$ and $Z_\omega^*$ respectively. Without loss of generality we may assume that $\bs = \Real(\bs)$.

We first show that $D Z^*$ converges to a holomorphic function on $C^\circ\cup \cup_j H_j$. To do this we consider first the convergence at points $\bs \in C^{\circ}$ such that $D(q^{-\bs}) \neq 0$.

Since, by Lemma~\ref{lem:ddeltaconvergence}, the infinite product defining $D$ converges absolutely to a holomorphic function on $C^\circ$, we may restrict our attention to $Z^*$. We claim that, after making the change of variables $x_i = q^{-s_i}$ on the left-hand side, we have
\begin{align*}
    |1\vert_\omega \sigma|\leq q^{(\theta - \sigma^{-1}\theta)(-\bs)}M^{l(\sigma)}
\end{align*}
on $C^{\circ}$ for some constant $M$ independent of $\sigma$ (but depending on $\bs$). The convergence will then follow from Lemma~\ref{lem:convergenceconstant}.

By Lemma~\ref{lem:supp} it suffices to bound both the polynomial $P_\sigma$ and the denominator
\begin{align*}
 \prod_{\alpha\in\Phi(\sigma)}(1-q^{1-m_\alpha\alpha(\bs)})^{-1}   
\end{align*}
by $M^{l(\sigma)}$ for some constant $M$ independent of $\sigma$. The bound on $P_\sigma$ is given by Lemma~\ref{lem:supp} (b).

Since $\bs \in C^\circ$, there exists a constant $S \in \mathbb{R}_{>0}$ such that $s_i > S$ for all $1\leq i\leq r$. We then have
\begin{align*}
m_\alpha \alpha(\bs) \geq S\cdot \height(\alpha),
\end{align*}
so that if $\height(\alpha) \geq 2/S $ then $|1-q^{1-m_\alpha \alpha(\bs)}| \geq 1-q^{-1}$. But there are at most finitely many $\alpha$ such that $\height(\alpha) < 2/S$. Thus there is some constant $M'$ such that $|1-q^{1-m_\alpha \alpha(\bs)}|^{-1}\leq M'$ for all $\alpha$ such that $\height(\alpha) < 2/S$. Note that $M'$ is finite since $1-q^{1-m_\alpha \alpha(\bs)}$ does not vanish by the assumption that $D(q^{-\bs})\neq 0$. Taking $M = \max(M',(1-q^{-1})^{-1})$ we have
\begin{align*}
\bigg|\prod_{\alpha\in\Phi(\sigma)}(1-q^{1-m_\alpha\alpha(\bs)})^{-1}\bigg| \leq M^{l(\sigma)}.
\end{align*}

We now consider convergence at points $\bs \in H_j$ such that $D(q^{-\bs}) \neq 0$. Note that the polynomial $P_\sigma$ may be bounded in exactly the same way as in the first case. The bound on the denominator will also follow similarly after noting that, as in the proof of Lemma~\ref{lem:convergenceconstant}, we have the inequality
\begin{align*}
    \height_j(\alpha) > \frac{1}{B+1}\height(\alpha).
\end{align*}

We finally consider the convergence at the points $\bs \in (C^{\circ}\cup \cup_j H_j)$ such that $D(q^{-\bs}) = 0$. One may check that every such $\bs$ lies on the intersection of a finite number of hyperplanes of the form $1-q^{1-m_\alpha \alpha(\bs)} = 0$; that is, at most a finite number of terms in the the product defining $D$ will vanish at $\bs$. Let $P$ be the finite set of roots such that the corresponding term in $D$ vanishes. We first note that all the terms $1\vert_\omega \sigma$ with $P \not\subseteq \Phi(\sigma)$ will vanish at $\bs$. For the remaining terms of the form $1\vert_\omega \sigma$ with $P \subseteq \Phi(\sigma)$ we may cancel the finitely many zeroes of $D$ with the corresponding simple poles appearing in the denominators and then bound each term as in the previous case. Note that removing finitely many terms from $D$ will not change the convergence behaviour.

We now show that $DZ^*$ converges to a holomorphic function on $\sigma (C^\circ\cup \cup_j H_j)$ for each $\sigma \in W$. Here we make use of the fact that $Z^*\vert_\omega \sigma = Z^*$. From the definition of the Chinta-Gunnells action, after applying first the change of variables $\bx \to \sigma_i\bx$ and then the change of variables $x_i \to q^{-s_i}$, and using the definition of the Chinta-Gunnells action in terms of signed parts we have
\begin{align*}
Z^{*(j)}_i(\sigma_i\bs) = \frac{q^{(l_i+1)\alpha_i(\bs)}}{1-q^{1+m_i\alpha_i(\bs)}}\left[Z^{*(j)}_i(\bs)q^{(m_i-j)\alpha_i(\bs)}(1-q) + Z^{*(-j)}_i(\bs)(1-q^{m_i\alpha_i(\bs))}g_{-jq(\alpha_i)}\right].
\end{align*}
It is clear that the denominator on the right-hand side does not introduce a pole for $\bs \in (C^\circ\cup \cup_jH_j)$. Thus the left-hand side converges on $\sigma_i (C^\circ\cup \cup_jH_j)$ to a meromorphic function with simple poles of order at most $1$ at the images of the poles of $Z^*$ on $C^\circ$ under $\sigma_i$. But these correspond exactly with the zeroes of $D(q^{-\sigma_i\bs})$, proving the claim for $\sigma_i$. The full claim follows immediately from induction on $l(\sigma)$.

Since $\bigcup_{\sigma \in W} \sigma(C^\circ\cup \cup_jH_j)$ is an open connected tubular neighbourhood we may apply Bochner's tube theorem \cite{bochner} to extend $DZ^*$ holomorphically to the convex hull of $\bigcup_{\sigma \in W} \sigma(C^\circ\cup_jH_j)$. But it is well known (c.f. \cite[Remark 10.6]{Kac_1990}) that
\begin{align*}
X^\circ =  \text{convex hull}\left(\bigcup_{\sigma\in W} \sigma C^\circ\right)
\end{align*}
so that $DZ^*$ extends holomorphically to a function on $X_\C^\circ$.

Finally we consider the denominator $\Delta(q^{-\bs})$ which again by Lemma~\ref{lem:ddeltaconvergence} converges absolutely to a holomorphic function on $C^\circ$. Thus it suffices to show that $DZ^*$ vanishes at the zeroes of $\Delta$. We note first that the terms $(1-q^{-m_\alpha \alpha(\bs)})^{\mult(\alpha)}$ corresponding to imaginary roots $\alpha \in \Phi_{\text{im}}$ do not vanish on $X_\C^\circ$ (this follows immediately from \cite[Proposition 5.8 c)]{Kac_1990}). Thus it suffices to consider the terms $(1-q^{-m_\alpha \alpha(\bs)})^{\mult(\alpha)}$ for $\alpha \in \Phi_{\text{re}}^+$. By the definition of a real root and the Chinta-Gunnells invariance it suffices to check that $DZ^*$ vanishes on the hyperplanes $q^{-m_is_i} = 1$ for each $i$. This follows immediately from the fact that $\bx^{\lambda}\vert_\omega \sigma_i = -\bx^{\lambda}$ when $x_i^{m_i} = 1$.
\end{proof}

\section{Functions invariant under the Chinta-Gunnells action }
We now turn our attention from the Chinta-Gunnells average itself to a more general class of functions invariant under the Chinta-Gunnells action. We first introduce a type of region in $\C^r$ on which  such invariant functions are naturally defined as meromorphic functions.

We recall the following definition \cite[Definition 2.4.4]{Hrmander1973AnIT}.
\begin{definition}
\label{def:Reinhardt}
An open set $\Omega \subseteq \mathbb{C}^n$ is called a \textit{Reinhardt domain} if $(z_1, \dots, z_n) \in \Omega$ implies $(e^{i\theta_1}z_1 \dots, e^{i\theta_n}z_n) \in \Omega$ for all real $\theta_1, \dots, \theta_n$.
\end{definition}

A key property is that a holomorphic function on a connected Reinhardt domain $\Omega$ containing $\mathbf{0}$ is equal to its Taylor series expansion at $\mathbf{0}$ on the whole of $\Omega$. This will allow us to justify replacing functions defined on such domains with their power series and applying the Chinta-Gunnells action term by term

\begin{hypothesis}
\label{hyp:analyticassumptions}
We say that a $W$-invariant region $Y \subseteq \mathbb{C}^r$ satisfies Hypothesis~\ref{hyp:analyticassumptions} if, for each $1\leq j\leq r$, there is a subset $Y_j \subseteq Y$ such that
\begin{itemize}
    \item $Y_j$ is an open connected Reinhardt domain containing $\mathbf{0}$
    \item the product $\prod_{\alpha\in\Phi_{\text{re}}^+\setminus\{\alpha_j\}}(1-q\bx^{m_\alpha\alpha})$ does not vanish on $Y_j$
    \item $Y_j$ is $\sigma_j$-invariant.
\end{itemize}
\end{hypothesis}

It is clear that such a set $Y$ exists. Indeed, each $Y_j$ will be contained in the set
\begin{align}
\label{eq:explicitmaximalXj}
    \{\bx \in \mathbb{C}^r\mid |\bx^{m_\alpha\alpha}|<q^{-1} \text{ for } \alpha \in\Phi_{\text{re}}^+\setminus\{\alpha_j\}\}
\end{align}
which itself satisfies the three conditions above. We may then take $Y$ to be the union of the $W$-orbits of the sets $Y_j$, $j=1,\ldots, r$.

We note that $\XC^\circ$ satisfies Hypothesis~\ref{hyp:analyticassumptions}, 
after the change of variables $x_i \to q^{-s_i}$. Indeed, after this substitution 
the set (\ref{eq:explicitmaximalXj}) takes the form
\begin{align*}
\{\bs \in \mathfrak{h} \mid \Real(\alpha(\bs))>m_\alpha^{-1} \text{ for } \alpha \in\Phi_{\text{re}}^+\setminus \{\alpha_j\}\}.     
\end{align*}
It is clear that this is contained in the set
\[
\{\bs \in \mathfrak{h}\mid \Real(\bs) \in C^\circ \cup \sigma_jC^\circ \cup H_j\}\subseteq \XC^{\circ}.
\]

It follows from this and Theorem~\ref{thm:convergence}, we see that the Chinta-Gunnells averages $Z_\omega$ and $Z^*_\omega$ are themselves $\vert^{\CG}_\omega$ (resp. $\vert_\omega$) invariant functions defined on a region satisfying Hypothesis~\ref{hyp:analyticassumptions} such that $DZ_\omega$ (resp. $DZ^*_\omega$) is holomorphic on that region.

Note that any function $Z$ defined on a set $Y$ satisfying Hypothesis~\ref{hyp:analyticassumptions} such that $DZ$ is holomorphic on $Y$ is automatically holomorphic on a neighbourhood of $\mathbf{0}$ so that its Taylor series expansion at $\mathbf{0}$ is well defined.

\subsection{Recursion relations for coefficients of invariant functions}
We now show that the invariance of a function $Z$ 
under either the action $\vert_\omega^{\CG}$, or the action $\vert_\omega$ is encoded by certain recursion relations between the coefficients of the Taylor series expansion at~$\mathbf{0}$.

\begin{lemma}
\label{lem:iparteq}
Let $F$ be a complex function on a $\sigma_i$-invariant region $Y \subseteq \mathbb{C}^r$. Then
\begin{align*}
(F(\bx)\vert_\omega^{\CG}\sigma_i)^{(j)}_i = \frac{x_i^{l_i+1}}{1-qx_i^{m_i}}\left[F_i^{(j)}(\sigma_i\bx)x_i^{-j}(1-q)-F_i^{(-j)}(\sigma_i\bx)(x_i^{-m_i}-1)g_{-jq(\alpha_i)}\right].
\end{align*} 
In particular if $j=0$ then
\begin{align*}
(F(\bx)\vert_\omega^{\CG}\sigma_i)^{(0)}_i = x_i^{l_i+1-m_i}F_i^{(0)}(\sigma_i\bx).
\end{align*}
The same formulas hold for the $\vert_\omega$ action, after multiplying the right-hand sides by $-x_i^{m_i}$. 
\end{lemma}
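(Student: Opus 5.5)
The plan is to work directly from the signed-part formula \eqref{eq:CG_defsigned}:
\[
F\vert^{\CG}_\omega\sigma_i=\frac{x_i^{l_i+1}}{1-qx_i^{m_i}}\sum_{k=0}^{m_i-1}F_i^{(k)}(\sigma_i\bx)\bigl[x_i^{-k}(1-q)-(x_i^{-m_i}-1)g_{q(\alpha_i)k}\bigr].
\]
I will determine, for each summand, which signed part it lies in. It is convenient to think monomially: $F_i^{(k)}$ is a combination of $\bx^\lambda$ with $\delta_i(\lambda)\equiv k \pmod{m_i}$. The same projection argument works on a general $\sigma_i$-invariant region via Definition~\ref{def:j_signed}, since the operators $\varepsilon_{i,\eta}$ act multiplicatively.

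\textbf{Step 1: how $\sigma_i$ and multiplication by $x_i$ move signed parts.} Since $\langle\sigma_i\lambda,\alpha_i^\vee\rangle=-\langle\lambda,\alpha_i^\vee\rangle$, we get $\delta_i(\sigma_i\lambda)=2(l_i+1)-\delta_i(\lambda)$. Moreover, multiplying by $x_i^{a}$ shifts $\langle\cdot,\alpha_i^\vee\rangle$ by $2a$, so it changes $\delta_i$ by $-2a$. Hence the monomial $x_i^{l_i+1}x_i^{-k}\bx^{\sigma_i\lambda}$, with $\delta_i(\lambda)\equiv k$, has
\[
\delta_i\equiv 2(l_i+1)-k-2(l_i+1)+2k=k \pmod{m_i}.
\]
On the other hand, $x_i^{l_i+1}\bx^{\sigma_i\lambda}$ without the factor $x_i^{-k}$ has $\delta_i\equiv -k$. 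Factors $x_i^{\pm m_i}$, including the expansion of $1/(1-qx_i^{m_i})$, preserve $\delta_i$ mod $m_i$, because $2m_i\equiv 0$.

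In function language, one checks directly that $G(\sigma_i\bx)$ evaluated at $\varepsilon_{i,\eta}\bx$ equals $G(\varepsilon_{i,\eta^{-1}}\sigma_i\bx)$, and that $\varepsilon_{i,\eta}x_i=\eta^2x_i$. This is what lets the bookkeeping work for general $F$ rather than only for power series.

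\textbf{Step 2: extract the $j$-th signed part.} By Step 1, the first term $F_i^{(k)}(\sigma_i\bx)\,x_i^{l_i+1-k}(1-q)/(1-qx_i^{m_i})$ lies in the $k$-signed part. The second term, with the factor $x_i^{l_i+1}(x_i^{-m_i}-1)/(1-qx_i^{m_i})$, lies in the $(-k)$-signed part. Projecting onto the $j$ part therefore picks $k\equiv j$ from the first sum and $k\equiv -j$ from the second. Using $g_{q(\alpha_i)k}$ with $k\equiv -j$, this gives $g_{-jq(\alpha_i)}$ and exactly the stated formula.

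One technical point needs care. The index $j$ in the statement is a residue, whereas the exponent $x_i^{-j}$ depends on the representative chosen. I will read $F_i^{(j)}$ and $x_i^{-j}$ with $j\in\{0,\dots,m_i-1\}$, as in \eqref{eq:CG_defsigned}. Changing the representative only multiplies by a power of $x_i^{m_i}$ in a way consistent with the definition, so this is just the convention already fixed there.

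\textbf{Step 3: the case $j=0$.} Here $g_0=-1$, so the bracket becomes
\[
(1-q)+(x_i^{-m_i}-1)=x_i^{-m_i}-q=x_i^{-m_i}(1-qx_i^{m_i}).
\]
This cancels the denominator and leaves $x_i^{l_i+1-m_i}F_i^{(0)}(\sigma_i\bx)$.

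For $\vert_\omega$, the definition gives $F\vert_\omega\sigma_i=-x_i^{m_i}\,F\vert^{\CG}_\omega\sigma_i$. Since $x_i^{m_i}$ preserves signed parts, the signed-part projection commutes with this multiplication, which yields the final claim.

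The main obstacle is the bookkeeping in Step 1: getting the sign conventions of $\delta_i$ under $\sigma_i$ and under multiplication by $x_i$ right, together with the representative issue for $j$. Everything else is direct substitution.
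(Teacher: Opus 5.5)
Your proposal is correct and follows the same route as the paper: the paper's proof also observes that, in the signed-part formula \eqref{eq:CG_defsigned}, the $(1-q)x_i^{-k}$ term lies in the $k$-signed part and the $g$-term lies in the $(-k)$-signed part, and then projects using uniqueness of the decomposition $F=\sum_k F_i^{(k)}$. Your Step~1 bookkeeping (via $\delta_i(\sigma_i\lambda)=2(l_i+1)-\delta_i(\lambda)$ and $\varepsilon_{i,\eta}x_i=\eta^2x_i$) and your explicit $j=0$ and $\vert_\omega$ computations supply details the paper leaves implicit; the one step you leave unstated is $g_{q(\alpha_i)k}=g_{-jq(\alpha_i)}$ for $k\equiv -j \pmod{m_i}$, which holds because $n\mid q(\alpha_i)m_i$.
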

\begin{proof}
From the definition of the Chinta-Gunnells action in terms of signed parts we have that
\begin{align}
\label{eq:dzsign}
F^{(j)}_i(\bx)\vert ^{\CG}_\omega \sigma_i = \frac{x_i^{l_i+1}}{1-qx_i^{m_i}}F_i^{(j)}(\sigma_i \bx)x_i^{-j}(1-q)-\frac{x_i^{l_i+1}}{1-qx_i^{m_i}}F_i^{(j)}(\sigma_i \bx)(x_i^{-m_i}-1)g_{q(\alpha_i)j}.
\end{align}

Using Definition~\ref{def:j_signed}, we see that the first term in the sum is its own $j$-signed part with respect to $i$, while the second term is ita own $(-j)$-signed part.
The first relation then follows from the uniqueness of the decomposition $F(\bx) = \sum_{0\leq j \leq m_i-1}F_i^{(j)}(\bx)$. This specializes to the second relation when $j=0$.
\end{proof}

\begin{proposition}
\label{prop:relations}
Fix a twisting parameter $\omega \in P^+$. Let $Z$ be a $\vert_\omega^{\CG}$-invariant function defined on a region $Y$ satisfying Hypothesis~\ref{hyp:analyticassumptions} such that $DZ$ is holomorphic on $Y$. Let $Z$ have Taylor series expansion at $\mathbf{0}$ given by $\sum_{\lambda \in Q^+}a_\lambda\bx^\lambda$.

For $i=1,\ldots,r$ and $j=r_{m_i}(\delta_i(\lambda))$  we have
\[  
a_\lam=\begin{cases}
    a_{\sigma_i\lambda +(l_i+1-m_i)\alpha_i} & \text{ if } j=0\\[5pt]
    g_{-jq(\alpha_i)}a_{\lambda + (j-m_i)\alpha_i} + a_{\sigma_i\lambda+(l_i+1-j)\alpha_i} - g_{-jq(\alpha_i)}a_{\sigma_i\lambda+(l_i+1-m_i)\alpha_i} & \text{ if } j\ne 0. 
\end{cases}
\]
\end{proposition}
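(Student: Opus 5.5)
The plan is to apply Lemma~\ref{lem:iparteq} with $F=Z$ and use the invariance $Z\vert_\omega^{\CG}\sigma_i=Z$. This turns the statement into two identities between signed parts of $Z$, from which the relations are read off by comparing Taylor/Laurent coefficients.

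\textbf{Step 1: the signed-part identities.} Lemma~\ref{lem:iparteq} gives, for each $j$,
\[
(1-qx_i^{m_i})\,Z_i^{(j)}(\bx)=x_i^{l_i+1}\Bigl[(1-q)x_i^{-j}Z_i^{(j)}(\sigma_i\bx)-g_{-jq(\alpha_i)}(x_i^{-m_i}-1)Z_i^{(-j)}(\sigma_i\bx)\Bigr],
\]
and for $j=0$ simply $Z_i^{(0)}(\bx)=x_i^{l_i+1-m_i}Z_i^{(0)}(\sigma_i\bx)$.

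\textbf{Step 2: justifying coefficient comparison (the main obstacle).} Both sides must be expandable as (Laurent) series in a common domain. I would work on the $\sigma_i$-invariant connected Reinhardt domain $Y_i$ of Hypothesis~\ref{hyp:analyticassumptions}. There the product $\prod_{\alpha\ne\alpha_i}(1-q\bx^{m_\alpha\alpha})$ is nonvanishing and invariant under $\sigma_i$ (up to the permutation of $\Phi^+_{\text{re}}\setminus\{\alpha_i\}$ by $\sigma_i$), so $(1-qx_i^{m_i})Z$ is holomorphic on $Y_i$. Consequently $(1-qx_i^{m_i})Z$ equals its Taylor series on all of $Y_i$, and hence so does each signed part. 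Substituting $\sigma_i\bx$, the monomial $\bx^\mu$ becomes $\bx^{\sigma_i\mu}$. This is a Laurent monomial, but on the open Reinhardt set $Y_i\cap\sigma_iY_i=Y_i$ (away from coordinate hyperplanes) all series converge absolutely. Laurent expansions on a Reinhardt domain are unique, so coefficients may be compared. The point to check carefully is that, for fixed $\lambda$, only finitely many $\mu$ contribute to each coefficient. This holds because $\mu$ is determined by $\lambda$ up to the shift by $m_i\alpha_i$ coming from $x_i^{-m_i}$.

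\textbf{Step 3: extracting the coefficients.} Take $\lambda$ with $\delta_i(\lambda)\equiv j$ and use $\sigma_i(\lambda-k\alpha_i)=\sigma_i\lambda+k\alpha_i$.

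For $j=0$, the coefficient of $\bx^\lambda$ on the right is $a_\mu$ with $\sigma_i\mu+(l_i+1-m_i)\alpha_i=\lambda$, i.e.\ $\mu=\sigma_i\lambda+(l_i+1-m_i)\alpha_i$. Note that $\delta_i(\mu)\equiv 0$, consistent with $\mu$ lying in the $0$-signed part. This gives the first case directly.

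For $j\ne0$, comparing the coefficient of $\bx^\lambda$ gives
\[
a_\lambda-qa_{\lambda-m_i\alpha_i}=(1-q)a_{\sigma_i\lambda+(l_i+1-j)\alpha_i}-g_{-jq(\alpha_i)}\bigl(a_{\sigma_i\lambda+(l_i+1-j-m_i)\alpha_i}-a_{\sigma_i\lambda+(l_i+1-j)\alpha_i}\bigr),
\]
where the indices in the last bracket lie in the $(-j)$-signed class, as required. To remove the factors of $q$, I would also write the analogous identity for the $(-j)$-part at the index $\lambda'=\sigma_i\lambda+(l_i+1-m_i)\alpha_i$, which satisfies $\delta_i(\lambda')\equiv -j$. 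I would then combine the two identities using $g_{jq(\alpha_i)}g_{-jq(\alpha_i)}=q$, and simplify, perhaps by a telescoping induction on the $\alpha_i$-coefficient of $\lambda$, starting from indices outside $Q^+$ where $a=0$. This should yield exactly the stated three-term relation.

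An alternative route is to solve the $2\times2$ linear system for the pair $\bigl(Z_i^{(j)},Z_i^{(-j)}\bigr)$ directly before expanding, clearing the denominator $1-qx_i^{m_i}$ first. This would produce the relation without any telescoping. I would try this first, with the explicit index bookkeeping as a check.
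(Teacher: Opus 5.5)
There is a genuine gap, and it sits exactly where the paper does its real work. In Step 2 you only establish that $(1-qx_i^{m_i})Z$ is holomorphic on $Y_i$. The function $Z$ and its signed parts may still have poles on $\{qx_i^{m_i}=1\}\cap Y_i$. Consequently the Taylor series $\sum a_\mu\bx^\mu$ need not converge on $Y_i$, and $\sum a_\mu\bx^{\sigma_i\mu}$ converges only near $\sigma_i(\mathbf{0})$, i.e. for large $|x_i|$. So the claim that ``all series converge absolutely'' on $Y_i$ is false.

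Already for finite $A_1$, $n=2$, $\omega=0$ one has $Z_0=1/(1-g_1x)$, one may take $Y_1=\C$, and the series $\sum g_1^kx^k$ and $\sum g_1^kx^{-k}$ have disjoint domains of convergence. In particular the case $j=0$ does not follow ``directly'' from $Z_i^{(0)}(\bx)=x_i^{l_i+1-m_i}Z_i^{(0)}(\sigma_i\bx)$. You first need to know that $Z_i^{(0)}$ has no pole along $qx_i^{m_i}=1$.

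The paper proves this by multiplying by $D$ and using $D(\bx)/D(\sigma_i\bx)=(1-qx_i^{m_i})/(1-qx_i^{-m_i})$ together with the $W$-invariance of $Y$. The right-hand side then carries the factor $1-qx_i^{m_i}$ against a function holomorphic near that hyperplane. Hence $DZ_i^{(0)}$ vanishes there, and $Z_i^{(0)}$ is holomorphic on all of $Y_i$. Only after this are the two expansions compared on $Y_i$.

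For $j\neq 0$ the three-term relation is never actually derived.

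\emph{Index error.} In your displayed identity $x_i^{-j}$ should multiply only the $(1-q)$-term, so the bracket should be $a_{\sigma_i\lambda+(l_i+1-m_i)\alpha_i}-a_{\sigma_i\lambda+(l_i+1)\alpha_i}$. Your indices lie in class $j$, not $-j$. In the $A_1$ example your identity at $\lambda=\alpha_1$ would give $g_1=0$.

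\emph{No base case for the telescoping.} Put $h_\lambda=a_\lambda-g_{-jq(\alpha_i)}a_{\lambda+(j-m_i)\alpha_i}$, $\tilde\lambda=\sigma_i\lambda+(l_i+1-j)\alpha_i$ and $T_\lambda=h_\lambda-h_{\tilde\lambda}$, so the target is $T_\lambda=0$. The correct raw identities for $j$ and $-j$ combine only to $T_\lambda=qT_{\lambda-m_i\alpha_i}$. When the $\alpha_i$-coefficient of $\lambda$ becomes negative, $T_\lambda=-h_{\tilde\lambda}$, where $\tilde\lambda$ has large $\alpha_i$-coefficient. Nothing forces this to vanish, so ``starting from indices outside $Q^+$'' gives nothing.

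\emph{What is missing.} The paper uses the combination $H=Z_i^{(j)}-g_{-jq(\alpha_i)}x_i^{m_i-j}Z_i^{(-j)}$. It satisfies the diagonal equation $H(\bx)=x_i^{l_i+1-j}H(\sigma_i\bx)$, and the same pole-cancellation argument applied to $DH$ shows that $H$ is holomorphic on $Y_i$. Your ``$2\times 2$'' alternative would have to discover precisely this combination.

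\emph{A way to close your own route.} Compare coefficients on $\sigma_i(P\cap(\C^*)^r)$ for a small polydisc $P$ about $\mathbf{0}$; this makes the raw identities valid for all $\lambda\in Q$. Then use that $\tilde\lambda\in\lambda+m_i\Z\alpha_i$, so the involution $\lambda\mapsto\tilde\lambda$ reverses each chain $\lambda_0+m_i\Z\alpha_i$. Writing $\tilde\lambda_0=\lambda_0+dm_i\alpha_i$, one gets $q^{k}T_{\lambda_0}=T_{\lambda_0+km_i\alpha_i}=-T_{\lambda_0+(d-k)m_i\alpha_i}=-q^{d-k}T_{\lambda_0}$. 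Hence $T\equiv 0$ because $q>0$, and the same argument works for $j=0$. Neither this argument nor $H$ appears in your proposal.
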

\begin{proof}
Assume first $j=0$. We note that
\begin{align*}
(DZ)^{(0)}_i = DZ^{(0)}_i
\end{align*}
by Lemma~\ref{cor:pulloutmonomial} and that
\begin{align*}
\frac{D(\bx;q)}{D(\sigma_i\bx;q)} = \frac{1-qx_i^{m_i}}{1-qx_i^{-m_i}}.
\end{align*}

Applying Lemma~\ref{lem:iparteq} to $Z$, multiplying both sides by $D$ and using the above facts we obtain the identity of functions
\begin{align}
\label{eq:untwisted0powerseriesrelation}
DZ_i^{(0)}(\bx) = x_i^{l_i+1-m_i}\frac{1-q\bx^{m_i\alpha_i}}{1-q\bx^{-m_i\alpha_i}}DZ_i^{(0)}(\sigma_i\bx).
\end{align}
By assumption $DZ$ is holomorphic on $Y$ and therefore so is $DZ_i^{(0)}$. Since $Y$ is $W$-invariant $DZ_i^{(0)}(\sigma_i\bx)$ is also holomorphic on $Y$ and thus the right-hand side vanishes at $q\bx^{m_i\alpha_i}=1$. We conclude that
\begin{align*}
\prod_{\alpha \in \Phi_{\text{re}}^+-\{\alpha_i\}}(1-q\bx^{m_\alpha\alpha})Z_i^{(0)}(\bx)
\end{align*}
is holomorphic on $Y$.  We now restrict to $Y_i$. By assumption the product above does not vanish on $Y_i$ so $Z_i^{(0)}$ is holomorphic on $Y_i$. Thus we may apply the fact that $Y_i$ contains $\mathbf{0}$ and is $\sigma_i$-invariant to write (\ref{eq:untwisted0powerseriesrelation}) as
\begin{align*}
\sum_{\substack{\lambda\in Q^+\\r_{m_i}(\delta_i(\lambda))=0}}a_\lambda \bx^\lambda = x_i^{l_i+1-m_i}\sum_{\substack{\lambda\in Q^+\\r_{m_i}(\delta_i(\lambda))=0}}a_\lambda \bx^{\sigma_i\lambda}
\end{align*}
from which the first result follows immediately. Here we also use the fact that $Y_i$ is a Reinhardt domain to equate the power series.

Assume now $0< j < m_i$. We define
\begin{align}
\label{eq:Hdefinition}
H(\bx) &= Z_i^{(j)}(\bx)-x_i^{m_i-j}g_{-jq(\alpha_i)}Z^{(-j)}_i(\bx).
\end{align}
We apply Lemma~\ref{lem:iparteq} to $j$ and $-j$ to obtain the identity of functions
\begin{align*}
\frac{1-q\bx^{-m_i\alpha_i}}{1-q\bx^{m_i\alpha_i}}DH(\bx)= x_i^{l_i+1-j}DH(\sigma_i\bx).
\end{align*}
Arguing as before we obtain the identity of power series
\begin{align}
\label{eq:Hpowerseriesequation}
\sum_{\lambda \in Q^+}h_\lambda \bx^\lambda = x_i^{l_i + 1 - j}\sum_{\lambda \in Q^+}h_\lambda \bx^{\sigma_i\lambda}
\end{align}
on $Y_i$ where $\sum_{\lambda \in Q^+}h_\lambda \bx^\lambda$ is the Taylor series expansion of $H$ at $\mathbf{0}$.

Restricting to a neighbourhood of $\mathbf{0}$ contained in $Y_i$ on which $\sum_{\lambda \in Q^+}a_\lambda\bx^\lambda$ converges absolutely we obtain from (\ref{eq:Hdefinition}) the identity of power series
\begin{align*}
\sum_{\lambda \in Q^+}h_\lambda \bx^\lambda &= \sum_{\substack{\lambda\in Q^+\\r_{m_i}(\delta_i(\lambda))=j}}a_\lambda \bx^\lambda-x_i^{m_i-j}g_{-jq(\alpha_i)}\sum_{\substack{\lambda\in Q^+\\r_{m_i}(\delta_i(\lambda))=m_i-j}}a_\lambda \bx^\lambda.
\end{align*}
Combining this with $(\ref{eq:Hpowerseriesequation})$ gives the desired relation.
\end{proof}

\begin{corollary}
\label{cor:coeffsdefine}
In the notation of Proposition~\ref{prop:relations} the coefficients $a_\lambda$ are determined by those with $\langle \lambda, \alpha_i^\vee\rangle \leq l_i$ for all $i$.
\end{corollary}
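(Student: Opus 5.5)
We argue by strong induction on the height $\height(\lambda)$ for $\lambda\in Q^+$. The claim is that every $a_\lambda$ is a linear combination, with coefficients polynomial in the $g_t$, of coefficients $a_\mu$ with $\mu\in Q^+$ satisfying $\langle\mu,\alpha_k^\vee\rangle\le l_k$ for all $k$. We also use the convention, implicit in Proposition~\ref{prop:relations}, that $a_\mu=0$ whenever $\mu\notin Q^+$.

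Fix $\lambda\in Q^+$. If $\langle\lambda,\alpha_k^\vee\rangle\le l_k$ for every $k$, there is nothing to prove. Otherwise choose an index $i$ with $\langle\lambda,\alpha_i^\vee\rangle\ge l_i+1$, so that $\delta_i(\lambda)=l_i+1-\langle\lambda,\alpha_i^\vee\rangle\le 0$, and apply the relation of Proposition~\ref{prop:relations} for this $i$. It remains to check that every coefficient on the right-hand side has index of strictly smaller height than $\lambda$ (or vanishes). Then the induction hypothesis applies, and the base case $\lambda=0$ is trivial since $0$ satisfies the bounds.

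For the height computation, write $d=\langle\lambda,\alpha_i^\vee\rangle\ge l_i+1$ and recall $\sigma_i\lambda=\lambda-d\alpha_i$.
\begin{itemize}
\item The term $a_{\sigma_i\lambda+(l_i+1-m_i)\alpha_i}$ has index $\lambda-(d-l_i-1+m_i)\alpha_i$. Since $d-l_i-1\ge0$ and $m_i\ge1$, its height is smaller by at least $1$.
\item The term $a_{\sigma_i\lambda+(l_i+1-j)\alpha_i}$ with $0<j<m_i$ has index $\lambda-(d-l_i-1+j)\alpha_i$. Here $j\ge1$, so its height is again strictly smaller.
\item The term $a_{\lambda+(j-m_i)\alpha_i}$ has index of height $\height(\lambda)-(m_i-j)$, which is strictly smaller since $j<m_i$.
\end{itemize}
Any index that leaves $Q^+$ contributes zero, since the Taylor expansion is supported on $Q^+$. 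Hence $a_\lambda$ is expressed through coefficients of strictly lower height, and the induction closes.

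The only point requiring care is the step where we choose $i$ with $\delta_i(\lambda)\le0$. This choice is what makes the reflected indices $\sigma_i\lambda+(\cdots)\alpha_i$ move downward in height. With $\delta_i(\lambda)>0$ the reflection would instead raise the height, which is why the determining set consists exactly of those $\lambda$ satisfying $\langle\lambda,\alpha_i^\vee\rangle\le l_i$ for all $i$.
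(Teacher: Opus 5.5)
Your proof is correct and follows the same route as the paper: when $\langle\lambda,\alpha_i^\vee\rangle>l_i$, every index on the right-hand side of the relations in Proposition~\ref{prop:relations} has the form $\lambda+k\alpha_i$ with $k<0$, so recursion on height terminates in the set where $\langle\lambda,\alpha_i^\vee\rangle\le l_i$ for all $i$. You carry out explicitly the check that $k<0$ in each case, including the boundary case $\langle\lambda,\alpha_i^\vee\rangle=l_i+1$, which the paper leaves as ``seen directly''.
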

\begin{proof}
Let $\langle \lambda, \alpha_i^\vee\rangle > l_i$. The result will follow from the fact that if $a_{\lambda'}$ is a coefficient appearing on the right-hand side of the relations in Proposition~\ref{prop:relations} then $\lambda' \prec \lambda$. Since all such $\lambda'$ are of the form $\lambda + k\alpha_i$ it is enough to check that each $k<0$. This can be seen directly.
\end{proof}

We have a similar result for a function invariant under the $\vert_\omega$ action.
\begin{proposition}
\label{prop:starrelations}
Fix a twisting parameter $\omega \in P^+$. Let $Z$ be a $\vert_\omega$-invariant function defined on a region $Y$ satisfying Hypothesis~\ref{hyp:analyticassumptions} such that $DZ$ is holomorphic on $Y$. Let $Z$ have Taylor series expansion at $\mathbf{0}$ given by $\sum_{\lambda \in Q^+}c_\lam\bx^\lambda$.

For $i=1,\ldots,r$ and $j=r_{m_i}(\delta_i(\lambda))$  we have
\[  
c_\lam=\begin{cases}
    -c_{\sigma_i\lambda +(l_i+1)\alpha_i} & \text{ if } j=0\\[5pt]
    g_{-jq(\alpha_i)}c_{\lambda + (j-m_i)\alpha_i} - c_{\sigma_i\lambda+(l_i+1+m_i-j)\alpha_i} + g_{-jq(\alpha_i)}c_{\sigma_i\lambda+(l_i+1)\alpha_i}.  & \text{ if } j\ne 0. 
\end{cases}
\]
\end{proposition}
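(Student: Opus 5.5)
We propose to mirror the proof of Proposition~\ref{prop:relations} step by step, replacing the $\vert_\omega^{\CG}$ action by the $\vert_\omega$ action. By the last sentence of Lemma~\ref{lem:iparteq}, the signed-part formulas for $\vert_\omega$ are those for $\vert_\omega^{\CG}$ multiplied by $-x_i^{m_i}$. So for $j=0$ the invariance $Z\vert_\omega\sigma_i=Z$ gives
\[
Z_i^{(0)}(\bx)=-x_i^{l_i+1}Z_i^{(0)}(\sigma_i\bx).
\]
For $0<j<m_i$ it gives
\[
Z_i^{(j)}(\bx)=\frac{-x_i^{l_i+1+m_i}}{1-qx_i^{m_i}}\Big[Z_i^{(j)}(\sigma_i\bx)x_i^{-j}(1-q)-Z_i^{(-j)}(\sigma_i\bx)(x_i^{-m_i}-1)g_{-jq(\alpha_i)}\Big].
\]

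The analytic part of the argument carries over verbatim. Multiply by $D$, use $(DZ)_i^{(j)}=DZ_i^{(j)}$ (Lemma~\ref{cor:pulloutmonomial}) and the ratio $D(\bx)/D(\sigma_i\bx)=(1-qx_i^{m_i})/(1-qx_i^{-m_i})$. The holomorphy of $DZ$ on the $W$-invariant region $Y$ then forces the vanishing along $q\bx^{m_i\alpha_i}=1$. Restricting to $Y_i$, the relevant signed parts are holomorphic on a $\sigma_i$-invariant connected Reinhardt domain containing $\mathbf 0$. We may therefore compare Taylor coefficients there.

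For $j=0$, comparing coefficients of $\bx^\lambda$ in $\sum a\bx^\lambda=-x_i^{l_i+1}\sum a\bx^{\sigma_i\lambda}$ gives $c_\lambda=-c_{\mu}$ with $\sigma_i\mu+(l_i+1)\alpha_i=\lambda$. Since $\sigma_i\alpha_i=-\alpha_i$, this means $\mu=\sigma_i\lambda+(l_i+1)\alpha_i$. We also check that $\delta_i$ of $\mu$ is $\equiv 0$ as well: one has $\delta_i(\sigma_i\lambda+(l_i+1)\alpha_i)=l_i+1+\langle\lambda,\alpha_i^\vee\rangle-2(l_i+1)=-\delta_i(\lambda)$, so the signed classes $j$ and $-j$ are swapped, which is consistent.

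For $j\neq0$, define the analogue of \eqref{eq:Hdefinition},
\[
H(\bx)=Z_i^{(j)}(\bx)-x_i^{m_i-j}g_{-jq(\alpha_i)}Z_i^{(-j)}(\bx).
\]
Applying the two displayed identities for $j$ and $-j$ and using $g_tg_{-t}=q$, we expect the bracket to collapse so that
\[
DH(\bx)=-x_i^{l_i+1+m_i-j}\,\frac{1-qx_i^{-m_i}}{1-qx_i^{m_i}}\,DH(\sigma_i\bx)\cdot(\text{unit}),
\]
yielding $H(\bx)=-x_i^{l_i+1+m_i-j}H(\sigma_i\bx)$ as power series. Expanding $H$'s coefficients as $h_\lambda=c_\lambda-g_{-jq(\alpha_i)}c_{\lambda+(j-m_i)\alpha_i}$ for $\lambda$ in class $j$, and $h_{\lambda'}$ similarly at the reflected index $\lambda'=\sigma_i\lambda+(l_i+1+m_i-j)\alpha_i$, gives exactly the stated three-term relation. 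Here the reflected term $g\,c_{\lambda'+(j-m_i)\alpha_i}$ becomes $g\,c_{\sigma_i\lambda+(l_i+1)\alpha_i}$.

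The main obstacle is the algebra in the $j\ne0$ case: verifying that $H$ transforms by a single monomial under the $\vert_\omega$ action, with the correct exponent $l_i+1+m_i-j$ and sign. I would do this by substituting both signed-part identities into $H(\bx)$ and simplifying with $g_{jq(\alpha_i)}g_{-jq(\alpha_i)}=q$. Alternatively, I would short-circuit the whole computation: $Z$ is $\vert_\omega$-invariant if and only if $\Delta$-type rescaling relates it to a $\vert_\omega^{\CG}$-invariant function. Since $\bx^\lambda\vert_\omega\sigma_i=-x_i^{m_i}\,\bx^\lambda\vert_\omega^{\CG}\sigma_i$, the identity $Z=-x_i^{m_i}Z\vert^{\CG}\sigma_i$ is exactly Lemma~\ref{lem:iparteq} with $l_i$ replaced by $l_i+m_i$ and an overall sign. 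The relations then follow from Proposition~\ref{prop:relations}'s computation with $l_i\mapsto l_i+m_i$, keeping $\delta_i$ defined with the original $l_i$ (same residue mod $m_i$), and with signs inserted on the reflected terms.
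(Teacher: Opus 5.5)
Your proposal is correct and follows the paper's argument. You obtain the signed-part identities from Lemma~\ref{lem:iparteq} with the extra factor $-x_i^{m_i}$ and treat $j=0$ directly. For $j\neq0$ you use the auxiliary function $H=Z_i^{(j)}-x_i^{m_i-j}g_{-jq(\alpha_i)}Z_i^{(-j)}$, which satisfies $H(\bx)=-x_i^{l_i+1+m_i-j}H(\sigma_i\bx)$, and then compare Taylor coefficients on $Y_i$ exactly as in Proposition~\ref{prop:relations}.

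One small slip: in your display for $DH$ the $D$-ratio is inverted and the ``unit'' factor is spurious. The correct form is $\frac{1-qx_i^{-m_i}}{1-qx_i^{m_i}}DH(\bx)=-x_i^{l_i+1+m_i-j}DH(\sigma_i\bx)$. Your identity for $H$ is nevertheless right, including the minus sign that the paper's own display drops, and it yields the stated relation.
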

\begin{proof}
We first consider the case $j=0$. From Lemmas~\ref{lem:iparteq} and \ref{cor:pulloutmonomial}, we have the identity of functions
\begin{align*}
DZ_i^{(0)}(\bx) = -x_i^{l_i+1}\frac{1-q\bx^{m_i\alpha_i}}{1-q\bx^{-m_i\alpha_i}}DZ_i^{(0)}(\sigma_i\bx)
\end{align*}
on $Y$. The rest of the proof for $j=0$ follows exactly as in Proposition~\ref{prop:relations}.

Similarly if $0 < j < m_i$ we define
\begin{align*}
H^{(j)}_i(\bx) &= Z_i^{(j)}(\xx)-x_i^{m_i-j}g_{-jq(\alpha_i)}Z^{(-j)}_i(\xx).
\end{align*}
From Lemmas~\ref{lem:iparteq} and \ref{cor:pulloutmonomial} we obtain an identity of functions
\begin{align*}
\frac{1-q\bx^{-m_i\alpha_i}}{1-q\bx^{m_i\alpha_i}}DH^{(j)}_i(\bx)= x_i^{l_i+1+m_i-j}DH^{(j)}_i(\sigma_i\bx)
\end{align*}
on $Y$. Again, the rest of the proof follows exactly as in Proposition~\ref{prop:relations}.
\end{proof}

\begin{corollary}
\label{cor:starcoeffsdefine}
In the notation of Proposition~\ref{prop:starrelations}, the coefficients $c_\lambda$ are determined by those with $\langle \lambda, \alpha_i^\vee\rangle \leq l_i$ for all $i$.
\end{corollary}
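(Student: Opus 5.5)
The argument mirrors the proof of Corollary~\ref{cor:coeffsdefine}, using the relations of Proposition~\ref{prop:starrelations} in place of those of Proposition~\ref{prop:relations}. We induct on $\height(\lambda)$, i.e.\ on the partial order $\prec$ on $Q^+$. The claim is the following. Suppose $\lambda\in Q^+$ satisfies $\langle\lambda,\alpha_i^\vee\rangle>l_i$ for some $i$. Then every coefficient $c_{\lambda'}$ on the right-hand side of the relation for $c_\lambda$ with respect to this index $i$ has $\lambda'=\lambda+k\alpha_i$ with $k<0$. Granting this, $c_\lambda$ is expressed through coefficients of strictly smaller height. Any such $\lambda'$ not lying in $Q^+$ has $c_{\lambda'}=0$. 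Repeating the reduction terminates, so all $c_\lambda$ are determined by those with $\langle\lambda,\alpha_i^\vee\rangle\le l_i$ for every $i$.

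The key step is the computation of the shifts $k$. Write $\delta=\delta_i(\lambda)=l_i+1-\langle\lambda,\alpha_i^\vee\rangle$. The hypothesis $\langle\lambda,\alpha_i^\vee\rangle>l_i$ means exactly $\delta\le 0$. Using $\sigma_i\lambda=\lambda-\langle\lambda,\alpha_i^\vee\rangle\alpha_i$, we have $\sigma_i\lambda+(l_i+1)\alpha_i=\lambda+\delta\alpha_i$.

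\begin{enumerate}[(i)]
\item If $j=0$, the only term is $c_{\lambda+\delta\alpha_i}$. Since $j=0$ means $m_i\mid\delta$, a zero shift would force $\delta=0$. We handle that below.
\item If $j\neq 0$, the three terms are $c_{\lambda+(j-m_i)\alpha_i}$, $c_{\lambda+(\delta+m_i-j)\alpha_i}$ and $c_{\lambda+\delta\alpha_i}$. Here $j-m_i<0$ automatically. Because $0<j<m_i$, $j\equiv\delta \pmod{m_i}$ and $\delta\le 0$, we get $\delta\le j-m_i<0$, so $\delta+m_i-j\le 0$ and $\delta<0$.
\end{enumerate}

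The main obstacle is the boundary case where some shift is $0$. This happens only when $\delta\equiv 0 \pmod{m_i}$ with $\delta=0$, or when $\delta=j-m_i$. In the latter case the middle term is $c_\lambda$ itself, and the relation reads $c_\lambda=(\text{lower terms})-c_\lambda$. That still determines $c_\lambda$, since $2c_\lambda$ equals lower terms. In the case $j=0$, $\delta=0$, the relation reads $c_\lambda=-c_\lambda$, so $c_\lambda=0$. So in every case $c_\lambda$ is determined by coefficients $c_{\lambda'}$ with $\lambda'\prec\lambda$. I expect this sign bookkeeping to be the only point needing care: it is where the $\vert_\omega$ relations differ from the $\vert_\omega^{\CG}$ ones, because the extra factor $-x_i^{m_i}$ shifts indices by $m_i$ relative to Corollary~\ref{cor:coeffsdefine}. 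With this, the induction on height completes the proof.
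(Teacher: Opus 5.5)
Your proposal is correct and follows essentially the same argument as the paper: every index on the right-hand side of the relation in Proposition~\ref{prop:starrelations} has the form $\lambda+k\alpha_i$ with $k<0$, except in the two boundary cases. These are $\langle\lambda,\alpha_i^\vee\rangle=l_i+1$ with $j=0$, where $c_\lambda=-c_\lambda$ forces $c_\lambda=0$, and $\langle\lambda,\alpha_i^\vee\rangle=l_i+1+m_i-j$ (your $\delta=j-m_i$), where the first and third terms coincide and the relation becomes $c_\lambda=g_{-jq(\alpha_i)}c_{\lambda+(j-m_i)\alpha_i}$, matching your observation that $2c_\lambda$ equals lower-order terms.
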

\begin{proof}

Suppose $\langle \lambda, \alpha_i^{\vee}\rangle > l_i$. As before, all $\lambda'$
on the right-hand side of the relations in Proposition~\ref{prop:starrelations} 
are of the form $\lambda + k\alpha_i$ and it is enough to check that each such $k<0$.

Assume that $\delta_i(\lambda) \equiv 0 \mod m_i$. If $\langle \lambda, \alpha_i^{\vee}\rangle > l_i+1$, then the result is clear. If $\langle \lambda, \alpha_i^{\vee}\rangle = l_i+1$, then the relation becomes $c_\lambda = -c_\lambda$ and in fact $c_\lambda = 0$.

Assume that $\delta_i(\lambda) \equiv j \mod m_i$ for $1\leq j\leq m_i-1$. By assumption we have $\langle \lambda, \alpha_i^\vee\rangle \equiv l_i + 1-j \mod m_i$. From the assumption $\langle \lambda, \alpha_i^\vee\rangle > l_i$, we in fact have $\langle \lambda, \alpha_i^\vee \rangle \geq l_i + 1 +m_i - j$. When the inequality is strict the result is clear, and when $\langle \lambda, \alpha_i^\vee \rangle = l_i + 1 +m_i - j$ the relation becomes $c_\lambda = g_{-jq(\alpha_i)}c_{\lambda + (j-m_i)\alpha_i}$.
\end{proof}

\subsection{Linear independence}
\label{subsec:linear}

Let $\om\in P^+$ with $\la \om, \a_i^\vee \ra = l_i \in \mathbb{Z}_{\ge 0}$.
As in \cite{Chinta2006OnTP}, one can characterize as follows the elements $\lam\in Q^+$ with
$$\langle \lambda, \alpha_i^\vee\rangle \leq l_i\qquad  (i=1,\ldots, r), $$
appearing in Corollaries \ref{cor:coeffsdefine} and \ref{cor:starcoeffsdefine}. Setting $\xi=\om-\lam$,
we have $\xi \leq \omega$ by the definition of the usual partial ordering on $P$.
Furthermore $\langle \xi,\alpha_i^\vee\rangle = l_i - \langle \lambda, \alpha_i^\vee\rangle \geq 0$, so $\xi\in P^+$.
Thus if we let
\begin{equation}\label{eq:pi_omega}
\Pi_{\omega} = \{\xi \in P^+ \mid \xi \leq \omega\},
\end{equation}
we conclude that an invariant function $Z$ as in Propositions~\ref{prop:relations} or~\ref{prop:starrelations} is completely
determined by its Taylor coefficients $a_{\om-\xi}$ for $\xi\in\Pi_\om$. Note that $\Pi_\om$ has a representation -heoretic interpretation,
as the set of dominant weights in the representation with highest weight $\omega$. For finite root systems,
a similar result for the coefficients of a function $DZ$ with $Z$ invariant under the $|_\om^{\CG}$ action was proved in  \cite{Chinta2006OnTP} in the quadratic case, and for arbitrary $n$ in  \cite{friedlanderpparts}.

Recall that $Z_\om^*=\Delta Z_\om$. As in the finite case we will show that, for any $\xi\in \Pi_\omega$, the function
$\bx^{\omega-\xi}Z_\xi^*$ 
is invariant under the action $|_\om$ and has Taylor coefficients
$a_{\omega - \xi} = 1$ and $a_{\omega - \xi'} = 0$ for all $\xi' \in \Pi_\omega$ with $\xi \neq \xi'$.
We will therefore show that the set $\{\bx^{\omega-\xi}Z_\xi^*\}_{\xi\in\Pi_\om}$ plays the role
of a basis for the space of functions $Z$ as in Proposition~\ref{prop:starrelations}.

\begin{lemma}
\label{lem:cginvarianceofbasis}
The function $\bx^{\omega - \xi}Z_\xi$ is $\vert_\omega^{\CG}$ invariant.
\end{lemma}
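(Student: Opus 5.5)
The idea is to reduce everything to the invariance of $Z_\xi$ under $\vert_\xi^{\CG}$, which holds by construction (it is an average over $W$ divided by $\Delta$). What remains is to compare the actions $\vert_\omega^{\CG}$ and $\vert_\xi^{\CG}$ on functions multiplied by the monomial $\bx^{\omega-\xi}$. Since $\omega-\xi\in Q^+$ and $\xi\in P^+$, set $\mu=\omega-\xi$ and $l_i'=\la\xi,\alpha_i^\vee\ra$, so that $l_i=l_i'+\la\mu,\alpha_i^\vee\ra$.

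The key identity to prove is
\[
(\bx^{\mu}f)\vert_\omega^{\CG}\sigma_i=\bx^{\mu}\,(f\vert_\xi^{\CG}\sigma_i)
\]
for each simple reflection $\sigma_i$, first for monomials $f=\bx^\lambda$ and then by linearity and continuity on $\sigma_i$-invariant Reinhardt domains, using the signed-part formula \eqref{eq:CG_defsigned}. The monomial check proceeds as follows.
\begin{itemize}
\item The shift satisfies $\delta_{\omega,i}(\lambda+\mu)=l_i+1-\la\lambda+\mu,\alpha_i^\vee\ra=l_i'+1-\la\lambda,\alpha_i^\vee\ra=\delta_{\xi,i}(\lambda)$. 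Consequently the bracketed factor $x_i^{-r_{m_i}(\delta)}(1-q)-(x_i^{-m_i}-1)g_{q(\alpha_i)\delta}$ and the denominator $1-qx_i^{m_i}$ coincide for the two sides.
\item For the exponents, the left side has $\sigma_i(\lambda+\mu)+(l_i+1)\alpha_i=\sigma_i\lambda+\mu-\la\mu,\alpha_i^\vee\ra\alpha_i+(l_i+1)\alpha_i=\mu+\sigma_i\lambda+(l_i'+1)\alpha_i$, which is exactly $\mu$ plus the exponent on the right.
\end{itemize}
This is a short computation. In the signed-part language, the $j$-signed part with respect to $\omega$ of $\bx^\mu f$ equals $\bx^\mu$ times the $j$-signed part with respect to $\xi$ of $f$. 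Indeed, $\varepsilon_{i,\eta}\bx^\mu=\eta^{l_i-l_i'}\bx^\mu$, and this factor exactly compensates for the change of $\eta^{j-l_i-1}$ to $\eta^{j-l_i'-1}$ in \eqref{eq:signedpart}.

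Given the identity, $(\bx^\mu Z_\xi)\vert_\omega^{\CG}\sigma_i=\bx^\mu(Z_\xi\vert_\xi^{\CG}\sigma_i)=\bx^\mu Z_\xi$ for every $i$, so invariance under all of $W$ follows because the $\sigma_i$ generate $W$ and the action is an action. For the invariance of $Z_\xi$ itself, I would argue that $\sum_\sigma 1\vert_\xi\sigma$ is $\vert_\xi$-invariant. By Theorem~\ref{thm:convergence} this sum converges on a region satisfying Hypothesis~\ref{hyp:analyticassumptions}, so rearranging the sum over $W$ is justified. It is then standard, as in \cite{Chinta2006WeylGM,Lee2012WeylGM}, that dividing by $\Delta$ converts $\vert_\xi$-invariance into $\vert_\xi^{\CG}$-invariance. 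The mechanism is that $\Delta(\sigma_i\bx)/\Delta(\bx)=-x_i^{-m_i}$, since $\sigma_i$ permutes $\Phi^+\setminus\{\alpha_i\}$ preserving $m_\alpha$ and multiplicities. Combined with Lemma~\ref{cor:pulloutmonomial} (a $Q'$-monomial-type factor pulls out), this gives $\vert_\xi^{\CG}$-invariance of $Z_\xi$. The paper already asserts this invariance after Hypothesis~\ref{hyp:analyticassumptions}, so I would simply cite it.

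The main obstacle is analytic rather than algebraic: justifying that the termwise monomial identity extends to the functions involved, which are only meromorphic. I would handle this by working on the $\sigma_i$-invariant Reinhardt domains $Y_i$ and applying \eqref{eq:CG_defsigned}, which is defined via evaluations $f(\varepsilon_{i,\eta}\bx)$. The compensation computation above applies directly to these evaluations, with no expansion needed, since $\bx^\mu$ is a genuine monomial. This avoids any convergence issue.
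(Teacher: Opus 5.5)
Your proof is correct and is essentially the paper's argument. The paper checks that $\bx^{\omega-\xi}\cdot 1\vert_\xi\sigma=\bx^{\omega-\xi}\vert_\omega\sigma$ for all $\sigma\in W$, which is your monomial intertwining identity iterated along a reduced word, and then sums over $W$ and divides by $\Delta$; you instead apply the intertwining directly to $Z_\xi$ and invoke its $\vert_\xi^{\CG}$-invariance, and your computation of the $\delta_i$ shift and exponents supplies the verification the paper leaves to the reader.
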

\begin{proof}
One may check explicitly that
\begin{align*}
\bx^{\omega-\xi}\cdot1\vert_{\xi} \sigma = \bx^{\omega-\xi}\vert_\omega \sigma.
\end{align*}
Summing both sides over all $\sigma \in W$ and dividing by $\Delta$ gives the desired result.
\end{proof}
For a power series $f=\sum a_\lam \bx^{\lam}$, let $\Supp (f)=\{\lam \mid a_\lam\ne 0\}$ denote its support.
\begin{lemma}
\label{lem:disjointsups}
Let $\xi_1, \xi_2 \in \Pi_\om$. Then
\begin{align*}
\omega - \xi_2 \in \Supp (\bx^{\omega-\xi_1}Z^*_{\xi_1})
\end{align*}
if and only if $\xi_1 = \xi_2$.
\end{lemma}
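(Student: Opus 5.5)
We reduce the statement to a support computation for $Z^*_{\xi_1}=\sum_{\sigma\in W}1\vert_{\xi_1}\sigma$. Dividing out the monomial, $\omega-\xi_2\in\Supp(\bx^{\omega-\xi_1}Z^*_{\xi_1})$ is equivalent to $\xi_1-\xi_2\in\Supp(Z^*_{\xi_1})$. Put $\theta_1=\xi_1+\rho$. By Lemma~\ref{lem:supp}(a), expanding the denominators as geometric series in $q\bx^{m_\alpha\alpha}$, the Taylor support of $1\vert_{\xi_1}\sigma$ at $\mathbf{0}$ is contained in
\[
\theta_1-\sigma^{-1}\theta_1+\Big\{\textstyle\sum_{\alpha\in\Phi(\sigma)}n_\alpha\alpha\ \Big|\ n_\alpha\in\mathbb{N}\Big\}.
\]
Summation over $W$ is justified by the absolute convergence near $\mathbf{0}$ from the proof of Theorem~\ref{thm:convergence}. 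For $\sigma=1$ the term is the constant $1$, so $0\in\Supp(Z^*_{\xi_1})$, which gives the ``if'' direction.

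For the ``only if'' direction, suppose $\xi_1-\xi_2$ lies in the support. Then there are $\sigma\neq 1$ and $\mu=\sum_{\alpha\in\Phi(\sigma)}n_\alpha\alpha$ with $n_\alpha\ge0$ such that
\[
\xi_1-\xi_2=\theta_1-\sigma^{-1}\theta_1+\mu.
\]
(The $\sigma=1$ term contributes only $0$, which would force $\xi_1=\xi_2$ directly.) Rearranging,
\[
\sigma^{-1}\theta_1=\xi_2+\rho+\mu,
\]
so $\sigma^{-1}\theta_1-\theta_2=\mu$, where $\theta_2=\xi_2+\rho$. We derive a contradiction by pairing with a suitable element.

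The key comparison is between the dominant weight $\theta_1$ and the $W$-translate $\sigma^{-1}\theta_1$. Since $\theta_1\in P^{++}$ and $\sigma\ne1$, the telescoping identity from Lemma~\ref{lem:convergenceconstant} gives
\[
\theta_1-\sigma^{-1}\theta_1=\sum_{j=1}^{k}\la\theta_1,\alpha_{i_j}^\vee\ra\beta_j,
\]
where $\{\beta_j\}=\Phi(\sigma)$ and each coefficient is at least $1$. The cleanest contradiction should come from the quadratic form. Since $q$ is $W$-invariant,
\[
q(\sigma^{-1}\theta_1)=q(\theta_1),
\]
and therefore $q(\theta_2+\mu)=q(\theta_1)$. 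On the other hand, $\theta_1-\theta_2=\xi_1-\xi_2\in Q$. I would show $\xi_1-\xi_2\ge0$ (this will use $\xi_2\le\omega$) and then apply the standard Kac argument: for $\lambda,\lambda'$ dominant with $\lambda-\lambda'\in Q^+\setminus\{0\}$, one has
\[
\la\lambda+\lambda',\lambda-\lambda'\ra>0
\]
once $\rho$ is added (cf.\ \cite[Lemma 11.4]{Kac_1990} and the proof of the Kac-Weyl formula). This gives strict inequalities of the shape $|\theta_2+\mu|^2<|\theta_1|^2$. The difficulty is that $\xi_1-\xi_2$ may have no sign.

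For that reason I would instead use the weight-module structure directly. The support of $\bx^{\omega-\xi_1}Z^*_{\xi_1}$ should correspond to the Weyl–Kac denominator-times-character picture: $\Delta Z_\xi$ is a $q$-deformation of $\sum_\sigma\pm e^{\sigma\theta-\theta}$. Then $\omega-\xi_2$ lies in the support only if
\[
\xi_2+\rho=\sigma^{-1}\theta_1-\mu
\]
is a weight of the form $w\theta_1-(\text{nonnegative combination of }\Phi(w))$. The main obstacle is exactly this step: showing that such a weight, with $\sigma\neq1$, is never strongly dominant. I would first try the following. Take $i=i_1$, so that $\beta_1=\alpha_i\in\Phi(\sigma)$, and pair with $\alpha_i^\vee$. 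The goal is to show $\la\sigma^{-1}\theta_1-\mu,\alpha_i^\vee\ra\le0$ for a simple root in $\Phi(\sigma)$, contradicting $\la\theta_2,\alpha_i^\vee\ra\ge1$. If a direct pairing bound fails, I would fall back on induction on $\ell(\sigma)$ via the recursion \eqref{eq:actiononpolynomial}, tracking that each new support element $\sigma_i(\cdot)+(l_i+1)\alpha_i+(\text{up to }m_i)\alpha_i$ keeps its $\rho$-shifted weight outside $P^{++}$ by a controlled amount.
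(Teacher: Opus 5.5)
Your setup matches the paper's: the reduction to $\xi_1-\xi_2\in\Supp(Z^*_{\xi_1})$, the support bound from Lemma~\ref{lem:supp}(a), the ``if'' direction, and the identity $\sigma^{-1}\theta_1=\theta_2+\mu$ with $\mu=\sum_{\alpha\in\Phi(\sigma)}n_\alpha\alpha$, $n_\alpha\ge0$, are what the paper uses. But you never prove the ``only if'' direction. You call the key step the main obstacle and offer only tentative strategies, and the one you make concrete fails as stated. Pairing with $\alpha_{i_1}^\vee$ does not work in general, because roots of $\Phi(\sigma)$ other than $\alpha_{i_1}$ can pair negatively with $\alpha_{i_1}^\vee$. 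For example, in type $A_2$ take $\xi_1=0$, $\sigma^{-1}=\sigma_1\sigma_2\sigma_1$ (so $i_1=1$ and $\Phi(\sigma)=\Phi^+$), and $\mu=n\alpha_2$. Then $\langle\sigma^{-1}\theta_1-\mu,\alpha_1^\vee\rangle=\langle-\rho-n\alpha_2,\alpha_1^\vee\rangle=n-1>0$ for $n\ge2$. Your fallback induction is not specified. You also drop the quadratic-form idea because ``$\xi_1-\xi_2$ may have no sign'', which is mistaken. Your own equation $\xi_1-\xi_2=(\theta_1-\sigma^{-1}\theta_1)+\mu$ writes it as a sum of two elements of $Q^+$. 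This does not use $\xi_2\le\omega$; membership in $\Pi_\omega$ is needed only for dominance.

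The missing idea is that a dominant weight is the largest element of its $W$-orbit: $\lambda-w\lambda\in Q^+$ for $\lambda\in P^+$ and $w\in W$. You need to apply this to both weights. Applied to $\theta_1$, it gives $\xi_1\ge\xi_2$ as above. For the reverse inequality, apply $\sigma$ to get $\sigma\theta_2=\theta_1-\sigma\mu$, so
\[
\xi_2-\xi_1=\theta_2-\theta_1=(\theta_2-\sigma\theta_2)+(-\sigma\mu).
\]
Here $\theta_2-\sigma\theta_2\in Q^+$ because $\theta_2=\xi_2+\rho$ is dominant. Also $-\sigma\mu\in Q^+$ because $\sigma$ sends every root of $\Phi(\sigma)$ to a negative root. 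Hence $\xi_1=\xi_2$, and no pairing or quadratic-form estimate is needed. This is the paper's proof. There it is phrased as $\sigma^{-1}\xi_1-\xi_2=\gamma$ with $\gamma=\mu+(\rho-\sigma^{-1}\rho)\in Q^+$ and $\sigma\gamma<0$, with dominance applied to $\xi_1$ and $\xi_2$ rather than to $\theta_1$ and $\theta_2$.
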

\begin{proof}
One direction is clear from the fact that $Z_\xi^*$ has constant term $1$. 

For the reverse direction, we assume that $\omega - \xi_2 \in \Supp (\bx^{\omega-\xi_1}Z_{\xi_1}^*)$, so that there exists $\sigma \in W$ with $\omega-\xi_2 \in \Supp (\bx^{\omega-\xi_1}\cdot1\vert_{\xi_1} \sigma)$. By Lemma~\ref{lem:supp} we have
\begin{align}
\label{eq:disjointsupscalc}
\omega-\xi_2 = \omega + \rho - \sigma^{-1}(\xi_1 + \rho) + \sum_{\alpha \in \Phi(\sigma)}n_\alpha \alpha,
\end{align}
with $n_\a\ge 0$.
It follows from (\ref{eq:disjointsupscalc}) that $\sigma^{-1}(\xi_1) - \xi_2 = \gamma$ with $\gamma \in Q^+$ and $\sigma(\gamma)<0$.

Now by definition $\sigma^{-1}(\xi_1) \geq \xi_2$. But $\xi_1$ is dominant and therefore is the largest weight in its Weyl orbit. Thus $\xi_1 \geq \xi_2$.

Similarly $\xi_1 -\sigma(\xi_2) = \sigma (\gamma)<0$. The same argument gives $\xi_2 \geq \xi_1$ and thus $\xi_1 = \xi_2$.
\end{proof}

\begin{remark}
We note that the Chinta-Gunnells average $Z_\omega^*$ is defined completely by the relations of Proposition~\ref{prop:starrelations}, and the initial conditions $c_0=1$, $c_{\om-\xi}=0$ for $\xi\in \Pi_\om \setminus \{\omega\}$,
which follow from Lemma~\ref{lem:disjointsups}.
\end{remark}

\begin{theorem}
\label{the:basis}
Fix a twisting parameter $\omega \in P^+$. Let $Z$ be a $\vert_\omega^{\CG}$-invariant function defined on a region $Y$ satisfying Hypothesis~\ref{hyp:analyticassumptions} such that $DZ$ is holomorphic on $Y$. Then
the Taylor series expansion of $Z$ at $\mathbf{0}$ can be written uniquely as
\begin{align*}
\sum_{\xi \in \Pi_\omega}c_{\om-\xi} \bx^{\omega - \xi}Z_\xi(\bx)
\end{align*}
where $c_\lam$ is the coefficient of $\bx^{\lam}$ in the Taylor series expansion of $\Delta Z$ at $0$.
\end{theorem}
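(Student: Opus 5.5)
Since $\Delta$ is a product with constant term $1$ that has no zeros near $\mathbf{0}$, the statement is equivalent to the identity of Taylor series
\[
\Delta Z=\sum_{\xi\in\Pi_\omega} c_{\omega-\xi}\,\bx^{\omega-\xi}Z^*_\xi ,
\]
and I will prove this form and then divide by $\Delta$. The first step is to transfer the invariance from $Z$ to $Z^*:=\Delta Z$. By Lemma~\ref{cor:pulloutmonomial}, each factor $1-\bx^{m_\alpha\alpha}$ of $\Delta$ passes through the action as $1-\bx^{m_\alpha\sigma_i\alpha}$. The product over $\alpha\in\Phi^+\setminus\{\alpha_i\}$ is $\sigma_i$-invariant, since $\sigma_i$ permutes these roots, preserves multiplicities and preserves $m_\alpha$ (real roots map to real roots of the same norm). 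Hence $\Delta(\bx)/\Delta(\sigma_i\bx)=(1-x_i^{m_i})/(1-x_i^{-m_i})=-x_i^{m_i}$. Consequently, $Z|_\omega^{\CG}\sigma_i=Z$ implies $(\Delta Z)|_\omega\sigma_i=\Delta Z$. Note that $D\Delta Z$ is holomorphic on $Y$ as long as $\Delta$ is holomorphic there. If this fails for the given $Y$, I would shrink $Y$ to a smaller $W$-invariant region that still satisfies Hypothesis~\ref{hyp:analyticassumptions}. For example, I would take the union of the $W$-orbits of small polydisc-type Reinhardt neighbourhoods on which $\Delta$ converges; this is enough for the Taylor-coefficient arguments. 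Then Proposition~\ref{prop:starrelations} and Corollary~\ref{cor:starcoeffsdefine} apply to $Z^*$, with Taylor coefficients $c_\lambda$.

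Next I form the candidate $F=\sum_{\xi\in\Pi_\omega}c_{\omega-\xi}\bx^{\omega-\xi}Z^*_\xi$. By Lemma~\ref{lem:cginvarianceofbasis}, together with the same factor computation for $\Delta$, each $\bx^{\omega-\xi}Z^*_\xi$ is $|_\omega$-invariant. Its Taylor coefficients therefore satisfy the relations of Proposition~\ref{prop:starrelations}, at least formally. Since $\Pi_\omega$ may be infinite, I will interpret $F$ as a formal power series. For each fixed $\lambda\in Q^+$, only those $\xi$ with $\omega-\xi\le\lambda$ contribute to the coefficient of $\bx^\lambda$, and there are finitely many of them. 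The relations are linear and each involves finitely many coefficients, so $F$'s coefficients satisfy them as well. By Lemma~\ref{lem:disjointsups} and the fact that $Z^*_\xi$ has constant term $1$, the coefficient of $\bx^{\omega-\xi'}$ in $F$ equals $c_{\omega-\xi'}$ for every $\xi'\in\Pi_\omega$.

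Then the difference $G=Z^*-F$ has coefficients $d_\lambda$ that satisfy the relations of Proposition~\ref{prop:starrelations} and vanish on $\{\omega-\xi:\xi\in\Pi_\omega\}$. This set is exactly the set of $\lambda\in Q^+$ with $\langle\lambda,\alpha_i^\vee\rangle\le l_i$ for all $i$. I now induct on height. If $\lambda$ lies outside that set, some $i$ has $\langle\lambda,\alpha_i^\vee\rangle>l_i$. The proof of Corollary~\ref{cor:starcoeffsdefine} then expresses $d_\lambda$ through coefficients $d_{\lambda+k\alpha_i}$ with $k<0$, which have smaller height (or are zero if not in $Q^+$), or it gives $d_\lambda=-d_\lambda$. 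Hence $d_\lambda=0$ for all $\lambda$, so $Z^*=F$ as formal series. Dividing by $\Delta$ gives the decomposition.

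For uniqueness, suppose $\sum_\xi c'_{\omega-\xi}\bx^{\omega-\xi}Z_\xi=Z$. Multiplying by $\Delta$ and reading off the coefficient of $\bx^{\omega-\xi'}$ with Lemma~\ref{lem:disjointsups} forces $c'_{\omega-\xi'}=c_{\omega-\xi'}$.

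I expect the main obstacle to be the analytic bookkeeping rather than the algebra. One issue is justifying that Proposition~\ref{prop:starrelations} applies to $\Delta Z$, and that the formal sum over a possibly infinite $\Pi_\omega$ makes sense. I would handle the latter by working purely with coefficientwise identities. I would check in particular that the relations of Proposition~\ref{prop:starrelations} only compare coefficients at indices of smaller height, and that each coefficient of $F$ is a finite sum.
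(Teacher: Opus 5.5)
Your proposal is correct and follows essentially the same route as the paper. You pass to the $|_\omega$-invariant function $\Delta Z$ and compare it with the locally finite formal sum $\sum_{\xi\in\Pi_\omega} c_{\omega-\xi}\bx^{\omega-\xi}Z^*_\xi$. Its coefficients satisfy the relations of Proposition~\ref{prop:starrelations} and agree with those of $\Delta Z$ at the indices $\omega-\xi$ by Lemma~\ref{lem:disjointsups}. You then conclude equality via Corollary~\ref{cor:starcoeffsdefine}, which you unpack as a height induction on the difference, divide by $\Delta$, and obtain uniqueness from Lemma~\ref{lem:disjointsups}. Your explicit check that $\Delta(\bx)/\Delta(\sigma_i\bx)=-x_i^{m_i}$ transfers the invariance, and your remark about ensuring $D\Delta Z$ is holomorphic on a suitable region, spell out steps the paper leaves implicit.
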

\begin{proof}
Since $Z$ is a $\vert^{\CG}_\omega$-invariant function we have that $\Delta Z$ is a $\vert_\omega$-invariant function.
Writing $\Delta Z= \sum_\lambda c_\lambda \bx^\lambda$ for its Taylor series expansion around the origin, we note
that the formal series
\begin{align*}
  \sum_{\xi \in \Pi_\omega}c_{\omega - \xi} \bx^{\omega - \xi}Z^*_\xi=\sum_{\lam\in Q^+} c_\lam' \bx^\lam
\end{align*}
has coefficients $c_\lam'$ that depend on only finitely many terms in the sum.
Since the coefficients of each series $\bx^{\omega - \xi}Z^*_\xi$ satisfy the relations in Proposition~\ref{prop:starrelations},
it follows that the coefficients $c_\lam'$ satisfy the same relations as those of $\Delta Z$. By Lemma~\ref{lem:disjointsups},
$c_{\om-\xi}'=c_{\om - \xi}$ for all $\xi\in\Pi_\om$, so the series above coincides with the Taylor series of $\Delta Z$,
by Corollary~\ref{cor:starcoeffsdefine}.  
Dividing both sides by $\Delta$ gives the desired expansion.

Uniqueness follows immediately from Lemma~\ref{lem:disjointsups}.
\end{proof}
Next, we restate the theorem above in two important cases.

\subsection{Example: the untwisted case}\label{sec:untwisted}

Assume $\om=0$ in this subsection.

If $\Phi$ is finite, then $\Pi_0=\{ 0\}$, and we recover the
following slight generalization of the result in \cite{Chinta2006OnTP, Friedlander2018TwistedWG}:
if $Z$ is invariant under $|_0^\CG$ with $DZ$ holomorphic on $\C^r$, and $Z(0,\ldots, 0)=1$, then
$Z=Z_0$ (in particular $DZ$ is actually a polynomial).

In the general case, the elements $\lam\in Q^+$ such that $\la \lam,\a_i^\vee\ra\le 0$, $i=1,\ldots, r$,
in Corollaries~\ref{cor:coeffsdefine} and~\ref{cor:starcoeffsdefine} have an interpretation in terms of imaginary
roots in $\Phi$. Namely it is shown in~\cite[Sec. 5.3]{Kac_1990} that the set
\[
K:=\{\lam\in Q^+\setminus \{0\}\mid \la\lam, \a_i^\vee \ra \le 0  \text{ for all } i,\ \Supp(\lam)  \ \text{connected}  \}
\]
consists of imaginary roots, and we have a disjoint union $\Phi_{\text{im}}^+ = \bigcup_{\ss\in W} \ss(K)$ .

For irreducible affine root systems, we have $K=\{n\dd \mid n\in \Z_{\ge 1}\}$ for $\dd$ the minimal positive imaginary root.
We recover a result proved in~\cite[Thm. 3.7]{BucurDiaconu2010Moments} for $\wD_4$.

For the star-shaped root system $\Phi_r$ mentioned in the introduction with $r\ge 4$,
the connectedness condition is satisfied automatically by any $\lam \in Q^+\setminus \{0\}$
with $\la\lam, \a_i^\vee \ra\le 0$ for all $i$,  so an invariant power series
$\sum a_\lam \bx^\lam $ satisfying the conditions of Proposition~\ref{prop:relations} is completely
determined by $a_0=1$ and $a_\lam$ for $\lam\in K$.

\subsection{Example: the affine case}

Assume now that $\Phi$ is an irreducible affine root system with minimal root $\dd\in\Phi^+_{\text{im}}$,
and consider a twisting parameter $\omega \in P^+$.

For a weight $\mu\in \Pi_\om$, we call $\mu$ \emph{maximal} if $\mu+\dd\notin \Pi_\om$.
Then the set of maximal weights
\[ \Pi_\om^{\max }:=\{ \mu \in \Pi_\om \mid \mu \text{ maximal}  \}
\]
is finite and $\Pi_\om$ is the disjoint union of the sets $\{\mu-n\dd\mid n\in \Z_{\ge 0} \}$
for $\mu\in\Pi_\om^{\max }$~\cite[Sec. 20.3]{Carter2005}. Note that $\la\dd, \a_i^\vee\ra=0$ for all $i$,
so the Chinta-Gunnells action twisted by $\mu-n\dd$ is the same as the action twisted by $\mu$.
Therefore Theorem~\ref{the:basis} can be restated as follows in terms of a finite decomposition,
which is closer in spirit to the result for finite $\Phi$.

\begin{theorem}
\label{the:decompaaffine} Assume $\Phi$ is irreducible affine, and fix a twisting parameter $\omega \in P^+$.
Let $Z$ be a $\vert_\omega^{\CG}$-invariant function defined on a region $Y$ satisfying Hypothesis~\ref{hyp:analyticassumptions} such that $DZ$ is holomorphic on $Y$. Then in a neighborhood of $\mathbf{0}$ we have the Taylor expansion
\begin{align*}
Z(\bx)=\sum_{\mu\in \Pi_\om^{\max }}C_\mu(\bx^\dd)\bx^{\omega - \mu}Z_\mu(\bx),
\end{align*}
where $C_\mu(x)$ are power series in $x$.
\end{theorem}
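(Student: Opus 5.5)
The plan is to deduce this directly from Theorem~\ref{the:basis} by grouping the terms of the infinite decomposition along the $\dd$-strings in $\Pi_\om$. By Theorem~\ref{the:basis}, near $\mathbf{0}$ we have
\begin{align*}
Z(\bx)=\sum_{\xi \in \Pi_\omega}c_{\om-\xi}\, \bx^{\omega - \xi}Z_\xi(\bx),
\end{align*}
where $c_\lam$ are the Taylor coefficients of $\Delta Z$. Using the disjoint union $\Pi_\om=\bigsqcup_{\mu\in\Pi_\om^{\max}}\{\mu-k\dd\mid k\in\Z_{\ge0}\}$ (from \cite[Sec. 20.3]{Carter2005}), we write each $\xi$ uniquely as $\xi=\mu-k\dd$. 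Then $\bx^{\om-\xi}=\bx^{k\dd}\bx^{\om-\mu}$.

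The key step is to check that $Z_{\mu-k\dd}=Z_\mu$. Since $\la\dd,\a_i^\vee\ra=0$ for all $i$, the twisting parameters $l_i$ attached to $\mu-k\dd$ and to $\mu$ coincide. Inspecting the definition of $\bx^\lam|_\om\sigma_i$, it depends on $\om$ only through the $l_i$ (via $\delta_{\om,i}(\lam)=l_i+1-\la\lam,\a_i^\vee\ra$ and the prefactor $\bx^{(l_i+1)\a_i}$), so the two actions agree. Hence $1|_{\mu-k\dd}\sigma=1|_\mu\sigma$ for every $\sigma\in W$, and dividing by $\Delta$ gives $Z_{\mu-k\dd}=Z_\mu$. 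Grouping terms then yields
\begin{align*}
Z(\bx)=\sum_{\mu\in\Pi_\om^{\max}}\Big(\sum_{k\ge0}c_{\om-\mu+k\dd}\,\bx^{k\dd}\Big)\bx^{\om-\mu}Z_\mu(\bx),
\end{align*}
so we take $C_\mu(x)=\sum_{k\ge0}c_{\om-\mu+k\dd}x^k$.

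The main point needing care is that the regrouping is legitimate. The identity in Theorem~\ref{the:basis} is established as an identity of formal power series in which each coefficient receives only finitely many contributions, so rearranging into finitely many groups indexed by the finite set $\Pi_\om^{\max}$ is valid coefficientwise. Each $C_\mu(\bx^\dd)$ is then a well-defined formal power series, and the resulting expression equals the Taylor expansion of $Z$ at $\mathbf{0}$, which is precisely what the statement asserts. If convergence of each $C_\mu$ near $\mathbf{0}$ were also wanted, I would try bounding $c_\lam$ by Cauchy estimates for the holomorphic function $\Delta Z$ near $\mathbf{0}$. The statement itself only asks for power series, however, so I expect the formal argument to suffice.
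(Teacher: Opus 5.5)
Your proposal is correct and follows the paper's argument. The paper deduces the theorem from Theorem~\ref{the:basis} by splitting $\Pi_\om$ into the $\dd$-strings $\{\mu-k\dd\}_{k\ge0}$ with $\mu\in\Pi_\om^{\max}$, and by observing that $\la\dd,\a_i^\vee\ra=0$ makes the actions $|_{\mu-k\dd}$ and $|_\mu$ coincide, so that $Z_{\mu-k\dd}=Z_\mu$. Your explicit choice $C_\mu(x)=\sum_{k\ge0}c_{\om-\mu+k\dd}x^k$, and your remark that the regrouping is valid coefficientwise (each coefficient receives only finitely many contributions, as in the proof of Theorem~\ref{the:basis}), spell out what the paper leaves implicit.
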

As already mentioned in \S\ref{sec:untwisted}, for $\om=0$ we have $\Pi_0^{\max }=\{ 0 \}$, and this result was proved for
$\wD_4$ in \cite{BucurDiaconu2010Moments}, and in the affine simply-laced case 
in \cite[Proposition 3.7]{Whitehead2014MultipleDS}.
\begin{remark}\label{rem:A11}
Another case when $\Pi_\om^{\max}=\{\om\}$ is when $\Phi$ is of type $\widetilde{A}_{r-1}$
and $\om=\om_i$ is a fundamental weight. Indeed, if $\mu\in \Pi_{\om_i}^{\max} $ then 
$\la\mu,\dd\ra=\la\om_i,\dd\ra=1$ since $\dd$ is the sum of the simple roots in $\Phi$. 
It follows that $\mu=\om_j+k\dd\le \om_i$ for some $j\in \{1,\ldots, r\}$ and $k\in \C$, 
which is impossible unless $j=i$ and $k=0$. 
\end{remark}

\section{A decomposition for twisted multiple Dirichlet series}\label{sec:WMDS}
We fix $\Phi$ a symmetrizable Kac-Moody root system of rank $r$.
We review the construction of the Weyl group multiple Dirichlet series $\mathcal{Z}(\bs;\bc)$ over  $\F_q(t)$, and its invariance properties under the Chinta-Gunnells action. We then show that the analytic conditions in
Theorem~\ref{the:basis} are satisfied, thus generalizing \cite[Theorem 4.4]{Friedlander2018TwistedWG} to the infinite dimensional Kac-Moody case.

\subsection{Construction of twisted MDS}
Fix $n\ge 2$ and $q \equiv 1 \mod{2n}$ a prime power, so that $\mathbb{F}_q$ contains the $2n$-th roots of unity.\footnote{This assumption is needed for the functional equation of Kubota's Dirichlet
series in~\cite{Brubaker2006OnKD} (see the proof of Theorem~\ref{thm:fullglobalconvergence}), and could probably be relaxed to $q\equiv 1 \pmod n$.}
We start by defining two sets of Gauss sums, which will be assigned
to the extra formal variables $g_t$ in the Chinta-Gunnells action in the global and local setting.
Let $\left( \frac{a}{b}\right)$ be the $n$-th order residue symbol defined for $a, b \in \mathbb{F}_q[t]$--see \cite{Sawin2024GeneralMD} for its definition and main properties. The assumption on $q$ simplifies the reciprocity law to $\symb ab= \symb ba$ for $a,b$ monic polynomials.

Let $\chi : \mathbb{F}_q^\times \to \mathbb{C}^\times$ be the $n$-th order character $\chi(a) = a^{\frac{q-1}{n}}$. Let also $\psi: \mathbb{F}_q \to \mathbb{C}$ be the additive character $\psi(x)= \exp(\frac{2\pi i}{p}\text{tr}_{\mathbb{F}_q/\mathbb{F}_p}x)$, where $p$ is the characteristic of $\mathbb{F}_q$. Then we define the associated Gauss sum
\begin{align*}
\gamma_t(q) = \sum_{a \in \mathbb{F}_q^\times}\chi(a)^t\psi(a)
\end{align*}
for $t \in \mathbb{Z}$. Note that $\gamma_t(q)$ depends only on the residue class of $t$ modulo $n$ and that $\gamma_0(q) = -1$ and $\gamma_t(q)\gamma_{-t}(q) = q$ for $t \not\equiv 0 \mod{n}$.

For $\pi \in \mathbb{F}_q[t]$ monic irreducible, the associated Gauss sum is defined as
\begin{align*}
\gamma_t(\pi) = \sum_{a \in (\mathbb{F}_q[t]/\pi)^\times}\left( \frac{a}{\pi}\right)^t\psi(\text{Res}(a/\pi)),
\end{align*}
where $\text{Res}(f)$ for $f(t)$ a rational function is the coefficient of $t^{-1}$ when $f$ is expressed as a formal Laurent series. Note again that $\gamma_t(\pi)$ depends only on the residue class of $t$ modulo $n$ and that $\gamma_0(\pi) = -1$ and $\gamma_t(\pi)\gamma_{-t}(\pi) = |\pi|$ for $t \not\equiv 0 \mod{n}$.

As in the introduction, we define a Weyl group multiple Dirichlet series with twisting parameter
$\bc=(c_1,\ldots, c_r)$  ($c_i\in \F_q[t]$ monic) and $\re(s_i)$ large enough by
\be\label{eq:globalwmds}
\Zc(\bs;\bc) = \sum_\boldm \frac{H(\boldm;\bc)}{|m_1|^{s_1}\cdots |m_r|^{s_r}}
\ee
where the sum is over tuples $\bm=(m_1,\ldots,m_r)$  of monic polynomials in $\F_q[t]$
and $|m|=q^{\deg m}$. The coefficients $H$ satisfy a twisted multiplicativity property \cite{Chinta2008ConstructingWG, Lee2012WeylGM}:
\begin{itemize}
 \item For $\bm=(m_1, \ldots, m_r)$, $\bm'=(m_1', \ldots, m_r')$  with $\prod m_i$ and $\prod m_i'$ coprime we have:
 \be\label{e_mult}
H(\bm \cdot \bm';\bc)= H(\bm;\bc ) H(\bm';\bc)\cdot
\prod_{i<j} \symb{m_i}{m_j'}^{\la \a_i,\a_j \ra} \symb{m_i'}{m_j}^{\la \a_i,\a_j \ra}
\ee
where multiplication of tuples is defined componentwise. Note that the symbols $\symb{m_i}{m_i'}$
in \cite{Chinta2008ConstructingWG} are not present due to the assumption $q\equiv 1\pmod{2n}$.
\item For $\bc=(c_1, \ldots, c_r)$ and $\bc'=(c_1', \ldots, c_r')$ with $\prod m_i$ and $\prod c_i'$ are coprime we have:
\[
H(\bm ;\bc\cdot \bc')=H(\bm ;\bc)\cdot \prod_{i=1}^r\symb{c_i'}{m_i}^{-q(\a_i)}.
\]
\end{itemize}
The series $\mathcal{Z}(\bs;\bc)$ is uniquely determined by its
 $\pi$-parts for  monic irreducible $\pi\in \F_q[t]$, which are defined as follows:
\be\label{eq:p_parts}
    \sum_{\lambda=\sum k_i\alpha_i \in Q^+}
    H(\pi^{k_1}, \ldots,\pi^{k_r} ;\pi^{v_\pi(c_1)}, \ldots, \pi^{v_\pi(c_r)})
    |\pi|^{-\lambda(\bs)} = Z_{\omega_\pi(\bc)}(|\pi|^{-\bs};|\pi|^{-1}).
 \ee
Here $\omega_\pi(\bc):= \sum_i v_\pi(c_i)\omega_i$, and
the parameters $g_t$ in the definition of $Z_{\omega_\pi(\bc)}(\bx;|\pi|^{-1})$ are given by $g_t=\gamma_t(\pi)|\pi|^{-1}$. Recall that for
$\bs=(s_1,\ldots, s_r)$, we set $q^{-\bs}=(q^{-s_1}, \ldots, q^{-s_r})$, and for $\lam=\sum k_i \a_i\in Q $  we set $\lam(\bs)=\sum k_i s_i$.

The choice of $\pi$-parts in defining $\mathcal{Z}(\bs;\bc)$ is far from being canonical. Even in the untwisted case
(all $c_i=1$), when $\Phi$ is infinite there are infinitely many choices of $\pi$-parts invariant
under the untwisted Chinta-Gunnells action, as explained in subsection~\ref{sec:untwisted}, and they would all produce an MDS satisfying the invariance in the theorem above.
The axiomatic method in~\cite{Diaconu2018ModuliOH, Whitehead2014MultipleDS, Sawin2024GeneralMD} produces a unique untwisted MDS
for arbitrary $\Phi$, and it is an open question on how to choose the $\pi$-parts to recover the axiomatic MDS.

\subsection{A domain of convergence and functional equations for twisted MDS}
In this section we follow closely \cite[Section 5]{Lee2012WeylGM}. When comparing the results, one should keep in mind the difference in normalizations with respect to loc. cit.
\begin{definition}
Let $\wD(\bx;q) = D(\bx;q)D_{\text{im}}(\bx;q)$, where  
\begin{align*}
     D_{\text{im}}(\bx;q)= \prod_{\alpha \in \Phi_{\text{im}}^+}(1-q\bx^{m_\alpha\alpha})^{\mult(\a)} .
\end{align*}
\end{definition}

\begin{lemma}
\label{lem:deltaeulerproduct}
Let $\bs \in \C^r$ with $\re(s_i)>1$. We have
\begin{align*}
    \wD(q^{-\bs};|q|) = \prod_\pi\Delta(|\pi|^{-\bs})
\end{align*}
where the product runs over all monic irreducible $\pi \in \mathbb{F}_q[t]$.
\end{lemma}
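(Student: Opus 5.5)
The plan is to expand both sides into sums over positive roots and compare them. The right-hand side is
\[
\prod_\pi\prod_{\alpha\in\Phi^+}(1-|\pi|^{-m_\alpha\alpha(\bs)})^{\mult(\alpha)} .
\]
First I would interchange the products over $\pi$ and over $\alpha$, which is justified by absolute convergence (discussed below). This gives
\[
\prod_{\alpha\in\Phi^+}\Big(\prod_\pi (1-|\pi|^{-u_\alpha})\Big)^{\mult(\alpha)},\qquad u_\alpha=m_\alpha\alpha(\bs).
\]
Next I would use the Euler product for the zeta function of $\F_q[t]$. For $\re(u)>1$,
\[
\prod_\pi(1-|\pi|^{-u})=\zeta_{\F_q[t]}(u)^{-1}=1-q^{1-u},
\]
since $\zeta_{\F_q[t]}(u)=\sum_{f \text{ monic}}|f|^{-u}=(1-q^{1-u})^{-1}$, as there are $q^d$ monic polynomials of degree $d$. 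Applying this to $u=u_\alpha$ gives the factor $(1-q\cdot q^{-m_\alpha\alpha(\bs)})^{\mult(\alpha)}$. This is exactly the factor of $D(q^{-\bs};q)$ when $\alpha$ is real (multiplicity $1$), and the factor of $D_{\text{im}}$ when $\alpha$ is imaginary. Taking the product over all $\alpha$ therefore gives $\wD(q^{-\bs};q)$.

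It remains to check the convergence conditions. Write $\alpha=\sum k_i\alpha_i\in Q^+\setminus\{0\}$. If $\re(s_i)>1$ for all $i$, then $\re(u_\alpha)\ge \height(\alpha)\min_i\re(s_i)>1$, so each Euler product converges absolutely. To justify the interchange, I would bound
\[
\sum_\pi\sum_{\alpha}\mult(\alpha)|\pi|^{-\re u_\alpha}\le \sum_{\alpha}\mult(\alpha)\sum_\pi|\pi|^{-\sigma_0\height(\alpha)},\qquad \sigma_0=\min_i\re(s_i)>1 .
\]
Here $\sum_\pi|\pi|^{-\sigma_0 h}\le \sum_{d\ge1}q^{d(1-\sigma_0 h)}$, which is $\ll q^{1-\sigma_0h}$.

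The main obstacle is showing that $\sum_\alpha\mult(\alpha)q^{-(\sigma_0-\epsilon)\height(\alpha)}$ converges, because root multiplicities can grow. I would handle this with Lemma~\ref{lem:ddeltaconvergence}, or rather the fact from its proof: $\sum_{\alpha\in\Phi^+}\mult(\alpha)q^{-\alpha(\bs')}$ converges absolutely for $\re(\bs')\in C^\circ$. Apply it with $\bs'=(\sigma_0-1)(1,\dots,1)$, which lies in $C^\circ$ because $\sigma_0>1$. Then $q^{1-\sigma_0\height(\alpha)}\le q^{-(\sigma_0-1)\height(\alpha)}$, since $\height(\alpha)\ge1$. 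So the double sum is dominated by a convergent series. This gives absolute convergence of the double product, legitimizes the rearrangement, and shows both sides are holomorphic and equal on the stated region.
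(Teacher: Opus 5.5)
Your argument is the paper's proof: interchange the products over $\pi$ and $\alpha$ and apply $\prod_\pi(1-|\pi|^{-u})=1-q^{1-u}$ to each factor. The paper gives no convergence justification at all, so your last paragraph is an addition to its proof.

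One caution about that addition: drop the lossy step $q^{1-\sigma_0 h}\le q^{-(\sigma_0-1)h}$. It forces you to use convergence of $\sum_{\alpha}\mult(\alpha)q^{-\alpha(\bs')}$ at points $\bs'$ arbitrarily close to the origin of $C^\circ$. That convergence is asserted in the proof of Lemma~\ref{lem:ddeltaconvergence}, but it is false for indefinite types in general. For example, take the hyperbolic Cartan matrix $\bigl(\begin{smallmatrix}2&-3\\-3&2\end{smallmatrix}\bigr)$ and specialize the denominator identity $\prod_{\alpha>0}(1-e^{-\alpha})^{\mult(\alpha)}=\sum_{w}(-1)^{l(w)}e^{w\rho-\rho}$ at $e^{-\alpha_i}\mapsto x$. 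The right-hand side becomes $1-2x+2x^5-2x^{16}+\cdots$, which vanishes near $x\approx 0.55$. Hence $\sum_{\alpha>0}\mult(\alpha)x^{\height(\alpha)}$ diverges for $x>0.56$, so your route breaks down when $\sigma_0$ is close to $1$.

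Instead, keep the full exponent $m_\alpha\sigma_0\height(\alpha)$, with $m_\alpha=n$ on imaginary roots. Then you only need convergence far from the origin. In this example that already suffices for every $\sigma_0>1$, since the number of roots of height $h$, counted with multiplicity, is at most $2^h$.
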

\begin{proof}
We have
\begin{align*}
\prod_\pi \Delta(|\pi|^{-\bs}) &= \prod_{\alpha \in \Phi^+}\prod_\pi (1-|\pi|^{-m_\alpha\alpha(\bs)})^{\mult(\a)}
\\
&= \prod_{\alpha \in \Phi^+}(1-q^{1-m_\alpha\alpha(\bs)})^{\mult(\a)}
\end{align*}
by the formula for the zeta function for $\mathbb{F}_q(t)$.
\end{proof}
For a power series $f(\bx)=\sum a_\lam \bx^\lam$, we denote by
$f^\abs(\bx)=\sum |a_\lam\bx^\lam|$ its absolute majorant.
\begin{theorem}
\label{thm:globalfundemntalchamber}
The series $\wD(q^{-\bs};q) \mathcal{Z}(\bs;\bc)$ converges absolutely for $\bs \in \hi$ with $\Real(s_i) > 1 $ for all $1\leq i \leq r$.
\end{theorem}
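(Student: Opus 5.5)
Compare $\wD(q^{-\bs};q)\mathcal{Z}(\bs;\bc)$ termwise with an Euler product of absolute majorants of local factors, following \cite[Section 5]{Lee2012WeylGM}. Twisted multiplicativity \eqref{e_mult} shows that $|H(\bm;\bc)|$ is multiplicative in $\bm$: the residue symbols have absolute value $1$ on coprime arguments, and the twisting by $\bc$ contributes only unimodular factors. Hence, coefficientwise, the absolute majorant of $\mathcal{Z}(\bs;\bc)$ is dominated by
\[
\prod_\pi Z^{\abs}_{\omega_\pi(\bc)}(|\pi|^{-\Real\bs};|\pi|^{-1}),
\]
where $Z^{\abs}$ is the majorant of the $\pi$-part \eqref{eq:p_parts}. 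Lemma~\ref{lem:deltaeulerproduct} lets me rewrite $\wD$ as $\prod_\pi \Delta(|\pi|^{-\bs})$. Since $\Delta$ has only finitely many terms of each degree, I would instead bound $\wD\,\mathcal{Z}$ via $\prod_\pi (\Delta Z_{\omega_\pi(\bc)})^{\abs}(|\pi|^{-\Real\bs})$, i.e. by the majorants of the local factors $Z^*_{\omega_\pi(\bc)}=\sum_{\sigma}1|_{\omega_\pi(\bc)}\sigma$. For this I need the coefficientwise inequality $(\wD\mathcal{Z})^{\abs}\le \prod_\pi (Z^*_\pi)^{\abs}$, which holds because the global product is the product of local series up to unimodular twists. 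I would verify this by expanding $\wD$ as a Dirichlet series whose coefficients are supported on the $\pi$-power decomposition and use the same twisted multiplicativity.

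Only finitely many $\pi$ divide $\prod c_i$. For those I use a crude finite bound: each local series $Z^*_{\omega}$ at the fixed point $|\pi|^{-\Real\bs}$ converges absolutely by the proof of Theorem~\ref{thm:convergence}, with a majorant version of Lemma~\ref{lem:supp} and Lemma~\ref{lem:convergenceconstant}. This works since $|\pi|^{-\Real s_i}<1$ places us in $C^\circ$.

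The key estimate is for the generic $\pi$ (where $\omega=0$): I need
\[
(Z_0^*)^{\abs}(|\pi|^{-\Real\bs};|\pi|^{-1}) \le 1 + C\sum_i |\pi|^{-\sigma_i}+\dots
\]
in such a way that the product over $\pi$ converges when $\sigma_i:=\Real s_i>1$. Concretely, I would show $(Z_0^*)^{\abs}(\bx)-1 = O\big(\sum_{\lambda\ne0}A_\lambda |\bx^\lambda|\big)$ with $A_\lambda$ bounded polynomially in $|\pi|^{1/2}$ times $|\pi|^{-1}$-type factors. The terms $1|_0\sigma$ with $\sigma=\sigma_i$ contribute linear terms $x_i$ with coefficient of size $|g|\cdot$const $=O(|\pi|^{-1/2})$ or $O(1)$. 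By Lemma~\ref{lem:supp}, each $1|_0\sigma$ carries $\bx^{\rho-\sigma^{-1}\rho}$, whose degree grows superlinearly in $l(\sigma)$ by \eqref{eq:finalafterintegral}. After expanding the denominators $\prod_{\alpha\in\Phi(\sigma)}(1-q\bx^{m_\alpha\alpha})^{-1}$ geometrically with $q=|\pi|^{-1}$, the coefficients have absolute value at most $(3K)^{l(\sigma)}$ with $K\le 1$ up to constants. Summing over $\pi$ of degree $d$ (there are $\le q^d/d$ of them) then requires $\sum_\pi |\pi|^{-\lambda(\Real\bs)}$ to converge for every $\lambda\ne0$ occurring, with total mass summable. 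This holds once $\lambda(\Real\bs)>1$ for all $\lambda\in Q^+\setminus\{0\}$, which is guaranteed by $\sigma_i>1$.

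The main obstacle is making this uniform in $\pi$. I must show that $\sum_{\lambda\ne0}|a_\lambda(\pi)|\,|\pi|^{-\lambda(\Real\bs)}\le C\,|\pi|^{-\min_i\sigma_i}$ with $C$ independent of $\pi$, so that $\prod_\pi(1+C|\pi|^{-\sigma})$ converges. I would first try a uniform version of Lemmas~\ref{lem:supp} and~\ref{lem:convergenceconstant}. Take $K=\max(|g_t|,|\pi|^{-1})\le 1$, noting $|\gamma_t(\pi)|/|\pi|=|\pi|^{-1/2}$. Then bound the coefficients uniformly, and factor $|\pi|^{-\min\sigma_i}$ out of the tail using $\lambda(\Real\bs)\ge \min\sigma_i+(\height\lambda-1)\epsilon$, which leaves a convergent sum independent of $\pi$. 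The infinitely many $\sigma$ are handled by the superlinear decay \eqref{eq:finalafterintegral} with $S=\min_i\sigma_i\log|\pi|\ge\log 2$.
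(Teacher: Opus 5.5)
Your proposal is correct and follows essentially the paper's route. You dominate $\wD\,\mathcal{Z}$ by the Euler product $\prod_\pi (Z^*_{\omega_\pi(\bc)})^{\abs}(|\pi|^{-\Real\bs};|\pi|^{-1})$ via twisted multiplicativity and Lemma~\ref{lem:deltaeulerproduct}, bound the local factors by $1+O(|\pi|^{-S})$ with $S=\min_i\Real(s_i)>1$ using Lemma~\ref{lem:supp}, and treat finitely many $\pi$ individually via Lemma~\ref{lem:convergenceconstant}.

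The only difference is bookkeeping. The paper avoids your uniformity concern by using just the linear bound $(\theta-\sigma^{-1}\theta)(\Real\bs)\ge S\,\ell(\sigma)$, which gives a geometric series with ratio $3r|\pi|^{-S}/(1-|\pi|^{-1})$ once $|\pi|^S>6r$. It also puts all small $\pi$ (not only those dividing $\prod c_i$) into the finite exceptional set, so \eqref{eq:finalafterintegral} is never needed uniformly in $\pi$; as written, that bound is not even monotone in $S$ for small $k$.
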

\begin{proof}
By Lemma~\ref{lem:deltaeulerproduct}, it can be shown
that the function $\wD(q^{-\bs};q) \mathcal{Z}(\bs,\bc)$
is a MDS with $\pi$-part $Z^*_{\omega_{\pi}(\bc)}(|\pi|^{-\bs};|\pi|^{-1})$.
Since its corresponding $H$ coefficients become multiplicative after replacing each term in the MDS
by its absolute value,  we obtain
\begin{align*}
|\wD(q^{-\bs};q)\mathcal{Z}(\bs;\bc)| \le \prod_\pi \big(Z^{*}_{\omega_{\pi}(\bc)}\big)^\abs(|\pi|^{-\bs};|\pi|^{-1}).
\end{align*}
Let $\bs \in \C^r$ such that $\Real(s_i)>1$ and set $S = \min_i\{\Real(s_i)\}>1$.
We have
\begin{align*}
\big(Z^*_{\omega_{\pi}(\bc)}\big)^\abs (|\pi|^{-\bs};|\pi|^{-1}) &\leq\sum_{\sigma \in W}
(1\vert_\omega \sigma)^\abs( |\pi|^{-\bs};|\pi|^{-1} ) \\
&\leq 1 + \sum_{k=1}^\infty \left(\frac{3r |\pi|^{-S}}{1-|\pi|^{-1} }\right)^k.
\end{align*}
where we used Lemma~\ref{lem:supp} together with the crude bound
$|\pi|^{(\sigma^{-1}\theta-\theta)(\bs)}\leq |\pi|^{-S \ell(\sigma)}$. 
In particular for $|\pi|$ large enough (i.e. such that $S > \log_{|\pi|}(6r)$) this is a convergent geometric series, so that we have
\begin{align*}
\sum_{\sigma \in W} (1\vert_\omega \sigma)^\abs = 1 + O(|\pi|^{-S}).
\end{align*}

For the finitely many $|\pi|$ such that  $S \leq \log_{|\pi|}(6r)$
the corresponding $\pi$-part $Z^*_{\omega_{\pi}(\bc)}(|\pi|^{-\bs};|\pi|^{-1})$ will converge absolutely by Lemma~\ref{lem:convergenceconstant}. In particular all the factors in the Euler product above
are absolutely convergent, and all but finitely many are bounded uniformly by $1 + O(|\pi|^{-S})$ which gives the desired result.
\end{proof}

In view of this result we make the following definition.
\begin{definition}
Letting 
$C_1 = \{ \bs \in \hi \mid s_i > 1 \text{ for }i=1,\dots,r\}\subset C,$
we define the following open subset of the Tits cone: 
\begin{align*}
    X_1 =\text{convex hull}\left(\bigcup_{\sigma \in W}\sigma(C_1)\right).
\end{align*}
\end{definition}

\begin{theorem}
\label{thm:fullglobalconvergence}
Assume $q\equiv 1\pmod{2n}$.
The series $\wD(q^{-\bs};q) \mathcal{Z}(\bs,\bc)$ can be analytically extended to a holomorphic function for $\bs \in \mathfrak{h}$ with $\Real(\bs) \in X_1$. Moreover, in this region $\mathcal{Z}(\bs,\bc)$ 
is invariant under  the Chinta-Gunnells action
  $\vert_\om^\CG$ of the Weyl group $W$, with twisting parameter $\om=\sum \deg(c_i) \om_i $,  
  $x_i=q^{-s_i}$ and $g_t=\gamma_t(q)$.
\end{theorem}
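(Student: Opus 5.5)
The proof follows the Bochner-tube strategy used in Theorem~\ref{thm:convergence}, with the local invariance of the $\pi$-parts replaced by a global functional equation for each simple reflection. The plan has three steps.

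\textbf{Step 1: one functional equation per simple reflection.} Fix $i$ and, following \cite[Section 5]{Lee2012WeylGM} and \cite[Section 4]{Friedlander2018TwistedWG}, split $\mathcal{Z}(\bs;\bc)$ according to the variable $m_i$:
\[
\mathcal{Z}(\bs;\bc)=\sum_{m_j,\, j\neq i}\frac{\cdots}{\prod_{j\ne i}|m_j|^{s_j}}\sum_{m_i}\frac{H(\bm;\bc)}{|m_i|^{s_i}}.
\]
By twisted multiplicativity \eqref{e_mult}, the inner sum over $m_i$ is essentially a Kubota Dirichlet series in $m_i$. It is twisted by the $n$-th residue symbol of modulus $c_i'=c_i\prod_{j\neq i}m_j^{-\langle\alpha_i,\alpha_j\rangle/q(\alpha_i)}$, with exponent $q(\alpha_i)$, hence of effective order $m_i$. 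Writing $m_i=m_i'm_i''$ with $m_i''\mid (c_i')^\infty$ separates the local factors at the primes dividing $c_i'$; these are governed by the local $\pi$-parts through \eqref{eq:p_parts}. The remaining coprime part is the Kubota series. The functional equation of Kubota series over $\F_q(t)$ \cite{Brubaker2006OnKD}, where $q\equiv 1\pmod{2n}$ is used, together with the local $\sigma_i$-invariance of each $\pi$-part $Z_{\omega_\pi(\bc)}$, gives the following. In the signed-part form \eqref{eq:CG_defsigned}, the $j$-signed parts $\mathcal{Z}^{(j)}_i$ transform as in Lemma~\ref{lem:iparteq}, with $x_i=q^{-s_i}$, $g_t=\gamma_t(q)$, and $l_i=\deg c_i$, which is the total $\sigma_i$-twist read off from the degrees. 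So $\mathcal{Z}|^{\CG}_\omega\sigma_i=\mathcal{Z}$ holds wherever both sides converge.

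\textbf{Step 2: extension to $C_1\cup\sigma_iC_1$.} The Kubota series has meromorphic continuation in $s_i$ with explicit denominators. Its continuation must be uniform in the other variables $m_j$, and for this I would use the Lindelöf-type convexity bound for Kubota series on the relevant strip, as in \cite[Section 5]{Lee2012WeylGM}. This shows that $\wD\mathcal{Z}$ is holomorphic on $\{\Real\bs\in C_1\cup\sigma_i C_1\cup H_i'\}$, where $H_i'$ is the wall between the two chambers. Since $\wD(q^{-\bs})/\wD(q^{-\sigma_i\bs})=(1-q^{1-m_is_i})/(1-q^{1+m_is_i})$, the factor $\wD$ absorbs the new poles created by $\sigma_i$; this is the same computation as in the proof of Theorem~\ref{thm:convergence}.

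\textbf{Step 3: tube theorem and invariance.} Iterating the functional equations over reduced words, exactly as in Theorem~\ref{thm:convergence}, gives holomorphy of $\wD\mathcal{Z}$ on the tube over $\bigcup_{\sigma\in W}\sigma(C_1)$, including the adjacent walls. This union is connected. Bochner's tube theorem \cite{bochner} then extends $\wD\mathcal{Z}$ holomorphically to the tube over its convex hull $X_1$. The invariance under $|^\CG_\omega$ holds on the overlaps of convergence by Step 1, and therefore on all of $X_1$ by uniqueness of analytic continuation.

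\textbf{Main obstacle.} The hardest point is Step 1. Three things have to be checked carefully:
\begin{itemize}
\item that the Gauss sums coming from the Kubota functional equation are exactly $\gamma_t(q)$, with the index $t=q(\alpha_i)j$ that appears in the $g$'s of the action;
\item that the total twist at $\sigma_i$ is $\deg c_i$, with the $m_j$-contributions cancelling against the normalization $\omega_\pi$;
\item that the correction factors from primes dividing $c_i'$ match the local $\pi$-part invariance.
\end{itemize}
I would first verify these identities at the level of signed parts for a single $\pi$, and then assemble them multiplicatively.
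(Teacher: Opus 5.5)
\textbf{Verdict: your route matches the paper's, but Steps 2--3 rest on a false connectivity claim, so the Bochner step does not go through as written.}

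The paper's proof simply invokes Lee--Zhang \cite[Prop.~5.7]{Lee2012WeylGM}. It takes Theorem~\ref{thm:globalfundemntalchamber} as the starting region and the Kubota functional equation of \cite{Brubaker2006OnKD} as the engine, and your Steps 1--3 outline that argument. The problem is the geometry you import from Theorem~\ref{thm:convergence}, where $C^\circ$ and $\sigma_jC^\circ$ share the wall $H_j$; it does not carry over to $C_1$.

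Since $\alpha_i(\sigma_i\bs)=-\alpha_i(\bs)$, you have $C_1\subset\{s_i>1\}$ and $\sigma_iC_1\subset\{s_i<-1\}$. So there is no wall $H_i'$ between them. More generally, for $w\neq 1$ choose $i$ with $w^{-1}\alpha_i=-\beta<0$. Then $\alpha_i(w\bs)=-\beta(\bs)<-1$ on $C_1$, so any two distinct translates $\sigma C_1,\tau C_1$ are separated by the slab $\tau\{|s_i|\le 1\}$. Hence $\bigcup_{\sigma\in W}\sigma(C_1)$ is a disjoint union of open pieces with pairwise disjoint closures. Your claim ``this union is connected'' is false. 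Bochner's theorem \cite{bochner} does not apply, because a holomorphic function on a disconnected tube need not extend to the convex hull.

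For the same reason, the identity $\Zc|^{\CG}_\omega\sigma_i=\Zc$ ``wherever both sides converge'' in Step 1 has empty domain a priori. The conditions $\Real\bs\in C_1$ and $\Real(\sigma_i\bs)\in C_1$ are incompatible, so you cannot establish invariance first and continue afterwards.

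Step 2 must actually deliver holomorphy of $\wD\Zc$ on the tube over a \emph{connected}, $\sigma_i$-stable region $\Lambda_i\supset C_1$ that crosses the slab $|s_i|\le 1$. To get it:
\begin{enumerate}
\item Continue each inner Kubota series in $s_i$; over $\F_q(t)$ it is a rational function of $q^{-s_i}$.
\item Bound it polynomially in the size of its modulus $c_i'$, with an exponent $\theta(\Real s_i)$. This exponent vanishes in the half-plane of absolute convergence, is dictated by the functional equation on the reflected side, and is interpolated in between by convexity or by degree bounds on the numerator.
\item Sum over the $m_j$, $j\neq i$. This converges as long as $\Real s_j$ exceeds $1$ by roughly $|\langle\alpha_j,\alpha_i^\vee\rangle|\,\theta(\Real s_i)$; that defines $\Lambda_i$, and one checks that it is $\sigma_i$-stable.
\item The $\sigma_i$-functional equation then holds on $\Lambda_i$ as an identity of continued functions, obtained by summing the Kubota functional equations term by term.
\end{enumerate}
Each $\sigma\Lambda_i$ is connected and contains $\sigma C_1$ and $\sigma\sigma_iC_1$, so the union of all $W$-translates of the $\Lambda_i$ is connected. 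Bochner then extends $\wD\Zc$ to the tube over the convex hull of this union, which contains $X_1$, and the invariance propagates by analytic continuation. The convexity bound you mention is the right tool, but its output is this bridging region, not $C_1\cup\sigma_iC_1\cup H_i'$.
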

\begin{proof}
For finite $\Phi$, this is was shown by Chinta and Gunnells
over number fields \cite{Chinta2008ConstructingWG}, and by Friedlander
over function fields \cite{Friedlander2018TwistedWG}.
In the symmetrizable Kac-Moody case, it was shown by Lee and Zhang in the number field setting 
\cite[Prop. 5.7]{Lee2012WeylGM}. Their proof applies in the function field setting, 
taking Theorem~\ref{thm:globalfundemntalchamber} as the input, but we omit the technical details.
The proof relies on the functional equation and analytic continuation of Kubota's generating 
series of Gauss sums,
which was proved in \cite{Brubaker2006OnKD} under the assumption that the global field contains the $2n$-th roots of unity. 
\end{proof}

\begin{corollary}
\label{thm:globalbasis}
Let $\bc \in \mathbb{F}_q[t]^r$ be a twisting parameter, and set $\omega = \sum_{i=1}^r \deg(c_i)\omega_i$. Then for $\bs \in \mathfrak{h}$ with $\Real(\bs) \in X_1$ we have
\begin{align*}
\mathcal{Z}(\bs;\bc) = \sum_{\xi \in\Pi_\omega}c_{\om-\xi} q^{-(\omega - \xi)(\bs)}Z_\xi(q^{-\bs};q),
\end{align*}
where $c_\lam$ is the coefficient of $q^{-\lam(\bs)}$ in the expansion of $\Delta(q^{-\bs}) \mathcal{Z}(\bs;\bc)$.
\end{corollary}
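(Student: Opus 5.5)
The plan is to deduce the corollary from Theorem~\ref{the:basis}, applied to the function $Z(\bx)=\mathcal{Z}(\bs;\bc)$ in the variables $x_i=q^{-s_i}$. Theorem~\ref{thm:fullglobalconvergence} already supplies the invariance under $\vert_\omega^{\CG}$ with $\omega=\sum\deg(c_i)\omega_i$ and $g_t=\gamma_t(q)$, which satisfy $g_0=-1$ and $g_tg_{-t}=q$. What remains is to check the analytic hypotheses of Theorem~\ref{the:basis}: we need a $W$-invariant region $Y\subseteq\C^r$ satisfying Hypothesis~\ref{hyp:analyticassumptions} on which $DZ$ is holomorphic. Once this is done, the theorem gives the Taylor expansion near $\mathbf{0}$. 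The equality then extends to all $\bs$ with $\Real(\bs)\in X_1$ by analytic continuation, since both sides are meromorphic there.

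\emph{Choice of the region.} I will take $Y$ to be the image under $\bs\mapsto q^{-\bs}$ of the tube $\{\Real(\bs)\in X_1\}$, or more precisely the union of the $W$-orbits of the sets $Y_j$ defined below. This set is $W$-invariant because $X_1$ is. For each $j$, I take $Y_j$ to be the image of the tube over a region of the form
\[
\{\bs\mid \Real(\alpha(\bs))>c_\alpha \text{ for } \alpha\in\Phi^+_{\text{re}}\setminus\{\alpha_j\}\}\cap\{\Real(\bs)\in C_1\cup\sigma_jC_1\cup H_j'\},
\]
where $H_j'$ is the wall joining $C_1$ and $\sigma_jC_1$. This is the analogue of the argument given after Hypothesis~\ref{hyp:analyticassumptions} for $\XC^\circ$. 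Its image in $\bx$-coordinates is Reinhardt, because only $|x_i|$ enters. It is $\sigma_j$-invariant, and it contains a neighbourhood of $\mathbf{0}$ once $|x_i|$ small (large $\Real(s_i)$) is allowed. The factors of $D$ other than $1-qx_j^{m_j}$ do not vanish there. I must also check that $Y_j$ sits inside the convex hull $X_1$; this is where the convexity of $X_1$ together with $\sigma_jC_1\subset X_1$ is used.

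\emph{Holomorphy of $DZ$.} Theorem~\ref{thm:fullglobalconvergence} gives holomorphy of $\wD\mathcal{Z}=D\,D_{\text{im}}\,\mathcal{Z}$ on $\Real(\bs)\in X_1$. So I need $D_{\text{im}}(q^{-\bs};q)$ to be nonvanishing there, in order to divide it out. Its factors are $1-q^{1-m_\alpha\alpha(\bs)}$ for imaginary $\alpha$. Every imaginary root is a $W$-translate of an element of $K$, and on the Tits cone $\Real\alpha(\bs)$ is positive on imaginary roots, so the task is to upgrade this to $\Real(m_\alpha\alpha(\bs))>1$ on $X_1$. On $C_1$ this holds trivially, since $\alpha(\bs)>\height(\alpha)\ge 2$. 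For the rest of $X_1$ I would use that $\Phi^+_{\text{im}}$ is $W$-stable and that $\alpha(\bs)$ is linear in $\bs$, so the lower bound persists on convex combinations of points of $\sigma C_1$. This step, the non-vanishing of $D_{\text{im}}$ on all of $X_1$ via $W$-stability of $\Phi^+_{\text{im}}$, is the one I expect to need the most care.

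With these hypotheses in place, Theorem~\ref{the:basis} gives $Z=\sum_{\xi\in\Pi_\omega}c_{\omega-\xi}\bx^{\omega-\xi}Z_\xi$ near $\mathbf{0}$, where the $c_\lambda$ are the Taylor coefficients of $\Delta Z$. After substituting $x_i=q^{-s_i}$, these are exactly the coefficients of $q^{-\lambda(\bs)}$ in $\Delta(q^{-\bs})\mathcal{Z}(\bs;\bc)$. Each term $Z_\xi$ continues by Theorem~\ref{thm:convergence}, and the identity holds throughout $X_1$ in the sense of the expansion.
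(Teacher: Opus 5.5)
Your outline matches the paper's proof. It takes invariance from Theorem~\ref{thm:fullglobalconvergence}. It gets non-vanishing of $D_{\text{im}}$ on the tube over $X_1$ from $\alpha(\sigma\bs)=(\sigma^{-1}\alpha)(\bs)>1$ for $\bs\in C_1$, $\alpha\in\Phi^+_{\text{im}}$, extended to convex combinations by linearity. It then checks Hypothesis~\ref{hyp:analyticassumptions} and applies Theorem~\ref{the:basis}.

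\textbf{The choice of $Y_j$ fails as written.} Unlike $C^\circ$ and $\sigma_jC^\circ$, the shifted chambers do not touch. Since $\alpha_j(\sigma_j\bs)=-\alpha_j(\bs)$, you have $C_1\subset\{\Real s_j>1\}$ while $\sigma_jC_1\subset\{\Real s_j<-1\}$. So there is no wall $H_j'$ joining them, and $C_1\cup\sigma_jC_1\cup H_j'$ does not give a connected set. Connectedness matters here: in the variables $\bx=q^{-\bs}$, it is what lets the Taylor series at $\mathbf{0}$ represent $Z$ at the point $\sigma_j\bx$ in the proof of Proposition~\ref{prop:relations}. Your remark about convexity of $X_1$ only addresses containment $Y_j\subseteq Y$, not this. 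The repair is the paper's choice: let $Y_j$ be the tube over the convex hull of $C_1\cup\sigma_jC_1$. This set is convex, $\sigma_j$-stable and contained in $X_1$. Moreover, $\Real\alpha(\bs)>1$ on it for every $\alpha\in\Phi^+\setminus\{\alpha_j\}$:
\begin{itemize}
\item on $C_1$ because $\Real\alpha(\bs)>\height(\alpha)$;
\item on $\sigma_jC_1$ because $\sigma_j$ permutes $\Phi^+\setminus\{\alpha_j\}$;
\item on the hull by linearity.
\end{itemize}
Hence the factors of $D$ other than $1-qx_j^{m_j}$ are nonzero there, with no constants $c_\alpha$ to impose.

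Two smaller points. First, to divide by $D_{\text{im}}$ you also need it to be holomorphic on the tube over $X_1\subseteq X^\circ$, which follows as in Lemma~\ref{lem:ddeltaconvergence}. Second, you extend the identity to all of $X_1$ ``since both sides are meromorphic.'' That presupposes locally uniform convergence of the infinite sum over $\Pi_\omega$, which neither you nor the paper establishes. Theorem~\ref{the:basis} delivers the identity as an expansion at $\mathbf{0}$, and it should be read in that sense.
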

\begin{proof}
 By the same proof as in Lemma~\ref{lem:ddeltaconvergence}, the function $D_{\text{im}}(q^{-\bs};q)$ converges 
 for $\bs$ in the interior of the Tits cone. We show that it does not vanish on the tube domain:
  $$Y = \{\bs \in \mathfrak{h}\mid \Real(\bs) \in X_1\}.$$ 
 Indeed,  the Weyl group permutes the imaginary roots, and for $\a\in \Phi^+_{\text{im}}$, $\ss\in W$ and 
$\bs\in C_1$ we have:
\[
\a(\ss \bs)=\ss^{-1}\a (\bs)>1,
\]
so $D_{\text{im}}(q^{-\bs};q)\ne 0$ for $\bs\in \ss C_1$. Moreover, for $\mathbf{u}\in X_1$ 
we may write $\mathbf{u}=\sum t_k \ss_k \bs_k$ with $t_k>0$,
$\sum t_k=1$ and $\bs_k\in C_1$. By linearity it follows that $\a(\mathbf{u})>1$, so $D_{\text{im}}(q^{-\bs};q)$ does not vanish on 
$Y$. 

Therefore, the function $D(q^{-\bs};q) \mathcal{Z}(\bs,\bc)$ is holomorphic on $Y$, and we can apply Theorem~\ref{the:basis} to finish the proof, after checking that the region $Y$
satisfies Hypothesis~\ref{hyp:analyticassumptions}. 

We define the sets $Y_j \subseteq Y$ in Hypothesis~\ref{hyp:analyticassumptions} by
\begin{align*}
    Y_j = \{ \bs \in \mathfrak{h} \mid \Real(\bs) \in \text{convex hull}\left(C_1 \cup \sigma_j C_1 \right)\}.
\end{align*}
Clearly these sets are open, connected Reinhardt domains (after the change of variables $\bx=q^{-\bs}$), and $\ss_jY_j=Y_j$.

To check the non-vanishing condition, it is clear from the definition that for $\bs \in C_1$ we have $\Real(\alpha(s))>1$  for all $\alpha \in \Phi^+$. It follows that for $\bs \in \sigma_jC_1$ we have $\Real(\alpha(s))>1$ for all $\alpha \in \Phi^+\setminus\{\alpha_j\}$. By linearity and the definition of the convex hull we have $\Real(\alpha(\bs)) > 1$ for $\bs \in Y_j$ and $\alpha \in \Phi^+\setminus\{\alpha_j\}$, so that the product $\prod_{\alpha\in\Phi_{\text{re}}^+\setminus\{\alpha_j\}}(1-q^{1-m_\alpha\alpha(\bs)})$ does not vanish on $Y_j$.
\end{proof}

\section{Multiple Dirichlet series for \texorpdfstring{$\affa{1}$}{A1}}
\label{section:computinga1avg}
The goal of this section is to illustrate the use of the decomposition in Theorem~\ref{the:decompaaffine} in the simplest case, where $\Phi$ is of type $\affa{1}$
and $n=2$. We compute the Chinta-Gunnells averages for $\affa{1}$ with twisting parameters $\omega=0$ and $\omega =\omega_i$,
where $\om_i$ is a fundamental weight. In both cases, there are natural candidate functions $Z$ invariant under the $\vert_\om^\CG$-action, and
$\Pi_\om^{\max}=\{\om\}$, so  the difficult part of applying Theorem~\ref{the:decompaaffine}
is to determine the power series coefficient $C_\om(x)$ appearing there. For this we show the existence of an extra functional equation
satisfied by $Z_\om$ that does not come from the Weyl group. This type of extra functional equation was first observed in the untwisted case for $\Phi$ of type $\widetilde{D}_4$ in \cite{diaconu2023quadraticweylgroupmultiple}.

The explicit computations of $Z_0$ and $Z_{\om_i}$ give $q$-deformations of the classical Jacobi triple product formula, which may be of independent interest.

In the final subsection, we consider the twisted WMDS $\Zc(\bs;\bc)$  over $\F_q[T]$,
for square-free twisting polynomials. In the case of $\affa{1}$, this MDS is an Euler
product that we compute explicitly in Theorem~\ref{thm:explicitaffa1mds}, using the formulas for $Z_0$ and $Z_{\om_i}$.
Interestingly, we show that $\Zc(\bs;\bc)$ also satisfies a global version of the extra functional equation (Theorem~\ref{thm:tauactiononmds}).

\subsection{The Chinta-Gunnells averages \texorpdfstring{$Z_0$}{Z0} and \texorpdfstring{$Z_{\om_1}$}{Z1}}
Recall that $\affa{1}$ is the root system with (generalized) Cartan matrix  $\bigl( \begin{smallmatrix}2 & -2\\ -2 & 2\end{smallmatrix}\bigr)$. We denote the simple roots
by $\a_0, \a_1$. The smallest positive imaginary root is $\delta = \alpha_0 + \alpha_1$ and we have
$\Phi_{\text{re}}^+ = \{\alpha_i + n \delta\mid i=0,1,\  n \geq 0\}$.

In this section, we fix $n=2$ and $g_1=u$ in the definition of the Chinta-Gunnells action, so that $q=u^2$.
For a twisting parameter $\omega = l_0\omega_0 + l_1 \omega_1\in P^+$,
the Chinta-Gunnells action of generators can be expressed explicitly on functions as
\begin{align*}
f\vert^{\CG}_\omega \sigma_i (\xx)= x_i^{l_i} \cdot
\begin{cases}
\frac{1-u/x_i}{1-ux_i}f(\sigma_i\bx), & l_i \text{ even}\\
\frac{1}{x_i}f(\sigma_i\bx), & l_i \text{ odd}.\\
\end{cases}
\end{align*}
We also have the related action $f|_\om \ss_i (\xx)=-x_i^2 f(\sigma_i\bx)$.

We define the Chinta-Gunnells average of a polynomial $f(\bx;u)$ by
\begin{equation}\label{eq:genericaffinea1average}
Z_{\omega ,f}(\bx;u) =  \frac{\sum_{\sigma \in W}f \vert_{\omega} \sigma (\bx;u)}{\Delta(\bx)},
\end{equation}
so that $Z_{\omega,1} = Z_\omega$. For any affine $\Phi$ and $\om=0$, it was shown in~\cite[Prop. 3.2]{DIACONU2025110359}
that there is a sufficiently large integer $M=M_f$ such that the function $\bx^{M\dd}Z_{0 ,f}(\bx;u)$ converges absolutely in the interior of the Tits cone, except for simple poles at $u \bx^\a=\pm 1$ for $\a \in \Phi^+_{\text{re}}$. The same conclusion holds for the twisted action by a similar proof.

In the next two subsections, we compute $Z_\om$ explicitly
for $\om\in \{0, \om_1\}$. In both cases, we have
$\Pi_\om^{\max}=\{\om\}$ in Theorem~\ref{the:decompaaffine} (see Remark~\ref{rem:A11}),
and we apply the theorem to the function $Z=F_\om$ where
\[
F_\om(\bx;u)=
\begin{cases}
\prod_{\alpha \in \Phi_{\text{re}}^+}(1-u\bx^{\alpha})^{-1} & \text{ if } \om=0\\
\prod_{\alpha \in \Phi_{\text{re}}^+ \cap W \alpha_0}(1-u\bx^{\alpha})^{-1}   & \text{ if } \om=\om_1.
\end{cases}
\]
More explicitly, we have
\[
F_0(\bx;u)=\frac{1}{\qpoc{ux_0}{\bx^{\dd}}\qpoc{ux_1}{\bx^{\dd}} }, \qquad
F_{\om_1}(\bx;u)=\frac{1}{\qpoc{ux_0}{\bx^{2\dd}}\qpoc{ux_ 1\bx^\dd}{\bx^{2\dd}} }
\]
and one checks directly that $F_\om=F_\om |_\om^\CG \ss_i$, $i=0,1$. Clearly
$F_\om$ converges in the interior of the Tits cone $|\xx^\dd|<1$, so we can apply Theorem~\ref{the:decompaaffine} to conclude that
$F_\om(\xx;u)= C_\om(\xx^\dd;u) Z_\om(\xx;u) $
for a power series $C_\om$ in $x$ whose coefficients are polynomials in $u$.  Let $N_\om=C_\om^{-1}$, so that
\begin{equation}\label{eq:untwistedA1average}
Z_\om(\xx;u)= N_\om(\xx^\dd;u) F_\om(\xx^\dd) .
\end{equation}
Note that $N_\om(x;u)$ converges for $|x|<1$, as the poles of $Z_\om$
are among those of $F_\om$, by the explicit formula for the action above.

It remains to explicitly determine the power series $N_\om(x;u)$. This is a recurring
theme in the theory of affine Kac-Moody algebras, starting with
the classical Macdonald formula \cite{MacDonald1971AffineRS}. The formula was inspired by the classical Weyl denominator formula for finite Lie algebras, which had to be ``corrected" in the affine setting by an explicit factor depending only on the imaginary roots. The determination of this factor is quite subtle in general, and it was achieved by Macdonald's case by case, using certain specializations of the variables.

In the next two subsections we determine the factor $N_\om$ using two ingredients:
\begin{itemize}
\item an extra functional equation satisfied by $Z_\om(\xx;u)$, under the transformation $u\mapsto u \xx^\dd$;
\item a specialization at $u=-1$ or $u=0$, depending on whether $\om=0$ or $\om=\om_1$, respectively,
 where $Z_\om(\xx;u)$ can be explicitly determined from Macdonald's formula:
\begin{align}
\label{eq: macdonald}
\sum_{\sigma \in W}(-1)^{l(\sigma)}\bx^{\rho-\sigma^{-1}\rho }=
\prod_{\alpha \in \Phi_{\text{re}}^+}(1-\bx^{\alpha})\prod_{n\geq1}(1-\bx^{n\delta}).
\end{align}
\end{itemize}

The existence of an extra functional equation in the untwisted affine setting was discovered in \cite{DIACONU2025110359} for $\wD_4$
with the help of a computer.
In the case of $\wA_1$, we establish the extra functional equation for $Z_0$ and $Z_{\om_1}$ in the next two subsections.

\subsection{Extra functional equation satisfied by \texorpdfstring{$Z_{0}$}{Z0}}
\label{subsec:untwistedaffa1computation}

We first consider the untwisted Chinta-Gunnells average $Z_{0}(\bx;u)$,
which we denote in this subsection by $Z(\xx;u)$ for simplicity.

If $f$ is a power series in $\bx$ and $u$ we define
\begin{align*}
f^\tau(\bx;u) = f(\bx;u\bx^\delta).
\end{align*}

We show that there exists a rational function $B(\bx;u)$ such that the action
\begin{align*}
(f\vert_0 \tau)(\bx;u) = B(\bx;u)f^\tau(\bx;u)
\end{align*}
commutes with the Chinta-Gunnells action $\vert_0$ of the Weyl group, namely
$f\vert_0 \tau\vert_0 \sigma_i = f\vert _0\sigma_i\vert_0 \tau$, $i=0,1$. For simplicity,
we omit the dependence on the weight $\om=0$ and write
$f\vert \tau$ for $f\vert_0 \tau$.

A computation shows that this requirement is equivalent to the equality
\begin{align*}
\frac{B(\bx)}{B(\sigma_i\bx)}=\frac{(1-u/x_i)(1-ux_i\bx^{\delta})}{(1-ux_i)(1-u\bx^{\delta}/x_i)},
\end{align*}
which has the solution
\begin{align*}
B(\bx) = \frac{1}{(1-ux_0)(1-ux_1)}.
\end{align*}

Note that the inverse action $\tau^{-1}$
is given by
\begin{align*}
(f\vert \tau^{-1})(\bx;u) = A(\bx;u)f(\bx;\frac{u}{\bx^{\delta}})
\end{align*}
where
\begin{align*}
A(\bx;u) = B^{-1}\left(\bx;\frac{u}{\bx^{\delta}}\right)=\frac{(x_0-u)(x_1-u)}{\bx^\delta}.
\end{align*}
It is important to note that $1\vert\tau^{-1}(\xx;u)=A(\xx;u)$ is a Laurent polynomial in $\xx$,
as in the case of $\wD_4$ \cite[4.1]{DIACONU2025110359}.

Since $\Delta$ does not depend on $u$, the commutativity of $\tau$ and the Weyl group action gives
$$Z_f\vert \tau = Z_{f\vert \tau}$$
for an arbitrary Laurent polynomial $f$, where $Z_f=Z_{0,f}$ was defined in~\eqref{eq:genericaffinea1average}.
In particular, setting $f = 1\vert\tau^{-1}$, we obtain the functional equation
\begin{align}
\label{eq:untwistedfunceq}
Z = Z_{1\vert \tau^{-1}}\vert\tau.
\end{align}
To compute $Z_{1\vert \tau^{-1}}$, we note that
$Z_u = uZ$, $Z_{\bx^{\delta}} = \bx^{\delta}Z$, and
 $(x_0 - u)\vert\sigma_0 = u-x_0$, which gives
 $Z_{x_0-u} = 0$. Using the formula above for $1\vert \tau^{-1}=A$, we obtain
\begin{align*}
Z_{1\vert \tau^{-1}} =Z \cdot (1-u^2/\xx^\dd).
\end{align*}
The functional equation in (\ref{eq:untwistedfunceq}) now becomes
\begin{align*}
Z_0(\bx;u) = Z_0(\bx;u\bx^{\delta})\cdot \frac{1-u^2\bx^{\delta}}{(1-ux_0)(1-ux_1)}.
\end{align*}
Writing $Z=Z_0$ in the form of (\ref{eq:untwistedA1average}) and cancelling the denominators. we obtain the functional equation for $N=N_0$:
\begin{align}
\label{eq:nfunceq}
N(x;u)= (1-u^2x)N(x;ux).
\end{align}

Finally, we can use the Macdonald identity to compute explicitly a specialization of $N(x;u)$.
Setting $u=-1$ in (\ref{eq:genericaffinea1average}), and applying Macdonald's identity~(\ref{eq: macdonald}) we have
\begin{align*}
Z(\xx;-1) &= \frac{\sum_{\ss\in W}(-1)^{l(\sigma)}\bx^{\rho-\sigma^{-1}\rho}}{\Delta}\\
&= \frac{\prod_{\alpha \in \Phi_{\text{re}}^+}(1-\bx^{\alpha})\cdot\prod_{n\geq1}(1-\bx^{n\delta})}{\prod_{\alpha \in \Phi_{\text{re}}^+}(1-\bx^{2\alpha})\cdot \prod_{n \geq 1} (1-\bx^{2n\delta})}
\end{align*}
so that, from (\ref{eq:untwistedA1average})
\[
N(x;-1) = \frac{1}{\prod_{n\geq 1}(1+x^n)}= (x;x^{2})_\infty.
\]

We note that the ansatz
\begin{align*}
N(x;u) = (u^2x;x^{2})_{\infty}
\end{align*}
satisfies both the necessary functional equations and gives the correct expression at $u=-1$, leading to the main result of this section.

\begin{theorem}
\label{the:explicituntwistedaverage}
We have
\begin{align*}
Z_0(\bx; u) = \frac{(u^2\bx^{\delta};\bx^{2\delta})_{\infty}}
{(ux_0;\bx^{\delta})_\infty(ux_1;\bx^{\delta})_\infty}.
\end{align*}
\end{theorem}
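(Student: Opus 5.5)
By \eqref{eq:untwistedA1average} with $\om=0$ we have $Z_0(\xx;u)=N(\xx^\dd;u)F_0(\xx;u)$, and $F_0$ is exactly the denominator in the statement. The theorem is therefore equivalent to the identity of power series $N(x;u)=(u^2x;x^2)_\infty$. I will prove this by a uniqueness argument, using the two properties already established: the functional equation \eqref{eq:nfunceq}, $N(x;u)=(1-u^2x)N(x;ux)$, and the specialization $N(x;-1)=(x;x^2)_\infty$.

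First I will check that the ansatz $M(x;u):=(u^2x;x^2)_\infty$ has both properties. For the functional equation, $M(x;ux)=(u^2x^3;x^2)_\infty$, so $(1-u^2x)M(x;ux)=M(x;u)$. For the specialization, $M(x;-1)=(x;x^2)_\infty$. Next set $R=N/M$. Since $M(0;u)=1$ and $M$ is invertible as a power series in $x$ with polynomial coefficients in $u$, $R$ is a power series $R(x;u)=\sum_{k\ge0} r_k(u)x^k$. Each $r_k$ lies in $\C[u]$, because the coefficients of $C_0$ (hence of $N=C_0^{-1}$, as $C_0(0)=1$) are polynomials in $u$. Dividing the two functional equations gives $R(x;u)=R(x;ux)$, and specializing gives $R(x;-1)=1$.

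The main step is to show that $R(x;u)=R(x;ux)$ forces $R$ to be independent of $u$. Comparing coefficients of $x^k$, we get $r_k(u)=\sum_{j\le k} [\text{coefficient of } x^{k-j} \text{ in } r_j(ux)]$. Writing $r_j(u)=\sum_m r_{j,m}u^m$, the term $r_{j,m}u^mx^{j+m}$ of $R(x;ux)$ corresponds to the term $r_{j,m}u^mx^j$ of $R(x;u)$. Hence the coefficient of $u^mx^k$ on both sides gives $r_{k,m}=r_{k-m,m}$. Iterating, $r_{k,m}=r_{k-tm,m}$ for all $t$, and for $m\ge1$ the index eventually becomes negative, so $r_{k,m}=0$. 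A cleaner way to see this is to treat $R$ as a power series in the two variables $x,u$: the map $u\mapsto ux$ sends the monomial $u^mx^k$ to $u^mx^{k+m}$, so invariance forces every monomial with $m\ge1$ to vanish, by induction on $k$. Thus $R(x;u)=R(x)$ is independent of $u$, and then $R(x;-1)=1$ gives $R\equiv1$.

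The point I expect to need the most care is justifying that $N$ is a genuine two-variable formal power series in $(x,u)$, so that the monomial comparison is legitimate. This follows from the polynomiality in $u$ of the coefficients noted after \eqref{eq:untwistedA1average}, together with $N(0;u)=1$, which holds because $Z_0$ and $F_0$ both have constant term $1$. One should also confirm that the substitution $u\mapsto ux$ is well defined on such series, which it is, since it only raises $x$-degrees. With $N=(u^2x;x^2)_\infty$ established, substituting into \eqref{eq:untwistedA1average} gives the stated formula.
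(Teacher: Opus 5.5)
Your proposal is correct and is essentially the paper's proof: divide $N$ by the ansatz $(u^2x;x^2)_\infty$, use \eqref{eq:nfunceq} to get invariance of the quotient under $u\mapsto ux$, deduce $c_{n,m}=c_{n-m,m}$ and hence vanishing for $m\neq 0$, and conclude with the specialization at $u=-1$. Your purely formal treatment in $\C[u][[x]]$ is, if anything, slightly cleaner than the paper's analytic region $|x|<1/u^2<1$, since it makes the substitution $u\mapsto ux$ and the specialization $u=-1$ automatically legitimate.
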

\begin{proof}
Clearly this is equivalent to showing that $N(x;u) = (u^2x;x^{2})_{\infty}$.
For $|x|< \frac{1}{u^2}<1$ we have a power series expansion
\begin{align*}
\frac{N(x;u)}{(u^2x;x^2)_{\infty}} = \sum_{n,m\geq 0}c_{n,m}u^mx^{n}
\end{align*}
for $c_{n,m} \in \mathbb{C}$.
Note that we may apply the functional equation $u\mapsto ux$ to both sides since after applying it we remain inside the region of convergence.
 By~\eqref{eq:nfunceq}, The left-hand side is invariant under this transformation,
so we conclude that $c_{n,m} = c_{n-m,m}$.  Iterating this relation we obtain $c_{n,m} = 0$ for $m \neq 0$, as $c_{n,m} = 0$ for $n<0$. Therefore
the right-hand side is constant in $u$, so it equals 1 by the specialization above.\end{proof}

By expressing explicitly the Chinta-Gunnells action, we can restate the previous theorem
as a $u$-deformation of the Jacobi triple product identity (the case $u=-1$).
\begin{corollary}
\label{cor:untwisteddeformationformula}
Let $x_0, x_1 \in\mathbb{C}$ and set $\bx^{\delta} = x_0x_1$. Then
\begin{align*}
\sum_{n\in\mathbb{Z}} u^nx_0^{\frac{n(n+1)}{2}}x_1^{\frac{n(n-1)}{2}}\frac{(u^{-1}x_0;\bx^{\delta})_n}{(ux_0;\bx^{\delta})_n} = \frac{(u^2\bx^{\delta};\bx^{2\delta})_\infty(x_0^2;\bx^{2\delta})_\infty(x_1^2;\bx^{2\delta})_\infty(\bx^{2\delta};\bx^{2\delta})_\infty}{(ux_0;\bx^{\delta})_\infty(ux_1;\bx^{\delta})_\infty}.
\end{align*}
\end{corollary}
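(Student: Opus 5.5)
Multiply the identity of Theorem~\ref{the:explicituntwistedaverage} by $\Delta(\bx)$. The left side then becomes $Z_0^*=\sum_{\sigma\in W}1\vert_0\sigma$, written out explicitly over the affine Weyl group of $\affa{1}$. The right side becomes the stated product once $\Delta$ is computed for $n=2$:
\[
\Delta(\bx)=\prod_{\a\in\Phi^+_{\text{re}}}(1-\bx^{2\a})\prod_{k\ge1}(1-\bx^{2k\dd}).
\]
Here every imaginary root $k\dd$ has multiplicity $1$ and $m_{k\dd}=2$, and every real root $\a$ has $q(\a)=1$, hence $m_\a=2$. Since $\Phi^+_{\text{re}}=\{\a_i+k\dd\}$, this gives
\[
\Delta=(x_0^2;\bx^{2\dd})_\infty(x_1^2;\bx^{2\dd})_\infty(\bx^{2\dd};\bx^{2\dd})_\infty .
\]
Multiplying the formula for $Z_0$ by $\Delta$ yields exactly the right-hand side of the corollary. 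It therefore suffices to show that the left-hand side equals $\sum_{\sigma\in W}1\vert_0\sigma$.

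To compute the left-hand side, I enumerate $W$ for $\affa{1}$. It is the infinite dihedral group, whose elements are the alternating words $\ss_0\ss_1\ss_0\cdots$ and $\ss_1\ss_0\ss_1\cdots$. I index these by $n\in\Z$: $n\ge0$ corresponds to a word of length $n$ beginning with one generator, and $n<0$ to a word of length $|n|$ beginning with the other (together with the identity). For $\om=0$ and $n=2$ the action is explicit, namely $f\vert_0\ss_i(\bx)=-x_i^2\,\frac{1-u/x_i}{1-ux_i}f(\ss_i\bx)=-x_i\frac{x_i-u}{1-ux_i}f(\ss_i\bx)$. Iterating along a reduced word, each step contributes a factor $-\bx^{\beta}\frac{\bx^\beta-u}{1-u\bx^\beta}$, where $\beta$ ranges over the roots in $\Phi(\sigma)$. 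The factor $\bx^\beta$ here accounts for the monomial part $\bx^{\rho-\sigma^{-1}\rho}$ of Lemma~\ref{lem:supp}. For a word of length $n$, these roots are $\a_0,\a_0+\dd,\dots,\a_0+(n-1)\dd$ or the analogous ones starting from $\a_1$.

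Consequently the $\sigma$-term is a signed monomial times $\prod_{k=0}^{n-1}\frac{x_0\bx^{k\dd}-u}{1-ux_0\bx^{k\dd}}$. Rewriting $x_0\bx^{k\dd}-u=-u(1-u^{-1}x_0\bx^{k\dd})$ produces the factor $u^n$, the ratio $\frac{(u^{-1}x_0;\bx^\dd)_n}{(ux_0;\bx^\dd)_n}$, and the monomial $\prod_k x_0\bx^{k\dd}=x_0^{n(n+1)/2}x_1^{n(n-1)/2}$. The signs $(-1)^n\cdot(-1)^n$ cancel. For negative $n$, the convention $(a;p)_n=1/(a p^{n};p)_{-n}$ should reproduce the terms from words beginning with $\ss_1$. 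This works after using $x_1=\bx^\dd/x_0$ together with the identity $\frac{(u^{-1}x_0;p)_{-m}}{(ux_0;p)_{-m}}=\frac{(ux_0p^{-m};p)_m}{(u^{-1}x_0p^{-m};p)_m}$, which converts factors like $x_1\bx^{k\dd}-u$ into the required form with a matching power of $u$.

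The step I expect to require the most care is precisely this matching of the negative-$n$ terms. The bookkeeping of signs, the powers of $u$ versus $u^{-1}$, and the monomial exponent $n(n+1)/2$ versus $n(n-1)/2$ under $n\mapsto -n$ all have to be checked, using repeatedly the reflection formula $(a;p)_{-m}=\prod_{j=1}^m(1-ap^{-j})^{-1}$. Finally, convergence for $|\bx^\dd|<1$ follows from Theorem~\ref{thm:convergence}, or directly from the quadratic exponent in the monomials, so the termwise identity of formal series holds as an identity of functions.
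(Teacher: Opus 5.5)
Your proposal is correct and is essentially the paper's proof: iterating the explicit action gives $1\vert_0\sigma=u^{n}x_i^{n}\bx^{\frac{n(n-1)}{2}\delta}(u^{-1}x_i;\bx^{\delta})_n/(ux_i;\bx^{\delta})_n$ for the two elements of length $n$, and the negative-index Pochhammer convention folds the $x_1$-family into the terms with $n<0$. This matching does work out exactly, with $u^{-m}$ and the factor $u^{2m}$ combining to $u^{m}$, and your explicit computation of $\Delta$ just spells out what the paper leaves implicit.

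One cosmetic point: the roots $\alpha_0+k\delta$ make up $\Phi(\sigma)$ when the reduced word \emph{ends} in $\sigma_0$ (i.e.\ $\sigma\alpha_0<0$), not when it begins with it. Since each length has exactly one element of each type, this relabeling does not affect the sum.
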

\begin{proof}
An induction on $n=l(\sigma)$ shows that if $\sigma \alpha_i < 0$ (i.e. if the reduced form of $\sigma$ ends with $\sigma_i$),
then
\begin{align*}
1 \vert_0 \sigma =u^nx_i^{n}\bx^{\frac{n(n-1)}{2}\delta}\frac{(u^{-1}x_i;\bx^{\delta})_n}{(ux_i;\bx^{\delta})_n}.
\end{align*}
Recalling that for $n>0$
\begin{align*}
(a;b)_{-n}={\frac {(-b/a)^{n}b^{n(n-1)/2}}{(b/a;b)_{n}}}
\end{align*}
one can rewrite $\sum_{\ss\in W} 1|_0 \ss$ as in the left-hand side of the identity.
\end{proof}
\subsection{Extra functional equation satisfied by \texorpdfstring{$Z_{\omega_1}$}{Zomega1}}
\label{subsec:twistedaffA1}
We now consider the twisted Chinta-Gunnells average $Z_{\omega_1}$ (again denoted in this section by $Z$).

In this case there is no obvious action $f|_{\om_1} \tau$ as in the previous section,
and we look for an action associated to the transformation
$f^{\tau^2}(\xx;u)=f(\xx;u\xx^{2\dd})$:
\begin{align*}
(f\vert_{\omega_1} \tau^2)(\bx;u) = B_{\om_1}(\bx;u)f^{\tau^2}(\bx;u),
\end{align*}
commuting with the Chinta-Gunnells action:
$$
f\vert_{\omega_1} \tau^2\vert_{\omega_1} \sigma_i = f\vert_{\omega_1} \sigma_i\vert_{\omega_1} \tau^2 \qquad (i=0,1).
$$
The commutativity is equivalent to the relations
\[
\frac{B_{\om_1}(\bx)}{B_{\om_1}(\sigma_0\bx)}=\frac{(1-u/x_0)(1-ux_0\bx^{2\delta})}{(1-ux_0)(1-ux_1\bx^{\delta})},\qquad
B_{\om_1}(\bx)=B_{\om_1}(\sigma_1\bx),
\]
which are satisfied by
\begin{align*}
B_{\om_1}(\bx) = \frac{1}{(1-ux_0)(1-ux_1\bx^{\delta})}.
\end{align*}
The inverse action is given by
\begin{align*}
(f\vert_{\omega_1} \tau^{-2})(\bx;u) = A_{\om_1}(\bx;u)f(\bx;\frac{u}{\bx^{2\delta}}),
\end{align*}
with
\[
A_{\om_1}(\bx;u) = B_{\om_1}^{-1}\left(\bx;\frac{u}{\bx^{2\delta}}\right)=\frac{(x_0-u)(x_1\bx^{\delta}-u)}{\bx^{2\delta}}
\]
a Laurent polynomial.

As before, we obtain a functional equation (recall $Z=Z_{\om_1}$ here)
\begin{align}
\label{eq:twistedfunceq}
Z = Z_{1\vert_{\omega_1} \tau^{-2}}\vert_{\omega_1}\tau^2.
\end{align}

To compute $Z_{1\vert_{\omega_1} \tau^{-2}}$, we note that
$(x_0 - u)\vert_\omega\sigma_0 = u-x_0$, so  $Z_{x_0-u} = 0$ and
$Z_{x_0} = Z_{u} = uZ$.  Also  $x_1 \vert_\omega \sigma_1 = -x_1$,
so $Z_{x_1} = 0$. It follows that
$Z_{1\vert_{\omega_1} \tau^{-2}}=Z $, and the functional equation in (\ref{eq:twistedfunceq}) now becomes
\begin{align*}
Z_{\om_1}(\bx;u) = Z_{\om_1}(\bx;u\bx^{2\delta})\cdot \frac{1}{(1-ux_0)(1-ux_1\bx^{\delta})}.
\end{align*}

Writing $Z$ as in (\ref{eq:untwistedA1average}) and cancelling denominators,
we obtain the following functional equation for $N:=N_{\om_1}$:
\begin{align*}
    N(x;u) = N(x;ux^{2}).
\end{align*}

Again, we compute a specialization of $N(x;u)$, this time at $u=0$.
In this case we have
$
\bx^{\lambda}\vert_{\omega_1}\sigma_i = -x_i^2\bx^{\sigma_i\lambda}
$,
and therefore, applying Macdonald's formula (\ref{eq: macdonald}), we obtain
\begin{align*}
Z_{\om_1}(\xx;0) &= \frac{\sum_{\sigma \in W}(-1)^{l(\sigma)}\bx^{2s(\sigma)}}{\Delta}=1.
\end{align*}
From (\ref{eq:untwistedA1average}) we have $N(x;0) = 1$.
A similar argument as in the proof of~Theorem~\ref{the:explicituntwistedaverage}
then gives $N(x;u) =1$, proving the following:
\begin{theorem}
\label{the:explicittwistedaverage}
We have
\begin{align*}
Z_{\omega_1}(\bx; u) = \frac{1}{\qpoc{ux_0}{\bx^{2\dd}}\qpoc{ux_ 1\bx^\dd}{\bx^{2\dd}} }.
\end{align*}
\end{theorem}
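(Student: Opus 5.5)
Since $Z_{\omega_1}=N(\bx^\dd;u)F_{\omega_1}(\bx;u)$ by \eqref{eq:untwistedA1average}, the statement is equivalent to $N(x;u)=1$, where $N=N_{\omega_1}$. The ingredients are already assembled above: the functional equation $N(x;u)=N(x;ux^2)$ and the specialization $N(x;0)=1$. I will follow the argument in the proof of Theorem~\ref{the:explicituntwistedaverage} closely.

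First, I recall that $N(x;u)=C_{\omega_1}(x;u)^{-1}$. Here $C_{\omega_1}$ is a power series in $x$ whose coefficients are polynomials in $u$, and its constant term is $1$: the constant terms of $F_{\omega_1}$ and $Z_{\omega_1}$ are both $1$. Hence $N$ has a double expansion $N(x;u)=\sum_{n,m\ge 0}c_{n,m}u^m x^n$, convergent absolutely for $|x|$ small and $|u|$ bounded, say $|x|<1/|u|^2$ with $|u|$ in a fixed bounded set. The substitution $u\mapsto ux^2$ keeps us inside this region once $|x|<1$, so we may compare coefficients in the two expansions.

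Comparing the coefficient of $u^m x^n$ on both sides of $N(x;u)=N(x;ux^2)$ gives $c_{n,m}=c_{n-2m,m}$, with the convention that $c_{k,m}=0$ for $k<0$. For $m\ge 1$ we iterate this relation; the first index drops by $2m$ each time, so it eventually becomes negative. This forces $c_{n,m}=0$ for every $m\ge1$, so $N(x;u)=N(x;0)=1$.

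The main obstacle is justifying the double expansion. I need that $N$ is genuinely holomorphic in $(x,u)$ near $x=0$, uniformly for $u$ in a disc, so that the termwise identity holds. My approach is this. $Z_{\omega_1}$ has poles only along $u\bx^\a=\pm1$, and these are among the poles of $F_{\omega_1}$. So $N=Z_{\omega_1}/F_{\omega_1}$ has no poles in the region $|\bx^\dd|<1$, $|u\bx^{\a}|<1$. In particular, taking $|x_0|,|x_1|$ small in terms of $|u|$ puts us in a polydisc where $N$ is holomorphic. There the Taylor expansion converges, which justifies the coefficient comparison.
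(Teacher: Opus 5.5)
Your proposal is correct and is essentially the paper's argument: the paper likewise combines the functional equation $N(x;u)=N(x;ux^2)$ with the specialization $N(x;0)=1$, and then reruns the coefficient comparison from the proof of Theorem~\ref{the:explicituntwistedaverage}, which here gives $c_{n,m}=c_{n-2m,m}$ and hence $c_{n,m}=0$ for $m\ge 1$. Your justification of the double expansion is at the same level of detail as the paper's own remark: the poles of $Z_{\omega_1}$ lie among those of $F_{\omega_1}$, and the polydisc is stable under $u\mapsto ux^2$. It could even be bypassed by working formally with power series in $x$ whose coefficients are polynomials in $u$.
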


Writing the formula for the twisted action explicitly, we obtain a
$u$-deformation of the character of the representation with highest weight $\om_1$
(the case $u=-1$). 

\begin{corollary}
\label{cor:twistedpartdeformationformula}
Let $x_0, x_1 \in\mathbb{C}$ and set $\bx^{\delta} = x_0x_1$. Then
\begin{align*}
\sum_{n\in\mathbb{Z}} (-u)^nx_0^{3n^2+2n}x_1^{3n^2-n}(1-x_0^{-4n}x_1^{2-4n})\frac{(u^{-1}x_0;\bx^{2\dd})_n}{(ux_0;\bx^{2\dd})_n}=\frac{(x_0^2;\bx^{2\delta})_\infty(x_1^2;\bx^{2\delta})_\infty(\bx^{2\delta};\bx^{2\delta})_\infty}{(ux_0;\bx^{2\delta})_\infty(ux_1\bx^{\delta};\bx^{2\delta})_\infty}.
\end{align*}
\end{corollary}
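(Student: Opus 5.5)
By Theorem~\ref{the:explicittwistedaverage} and Definition~\ref{def:cwaverage}, it suffices to show two things: that $\Delta(\bx)$ equals the numerator on the right-hand side, and that $\sum_{\sigma\in W}1\vert_{\omega_1}\sigma$ equals the left-hand side. Then $\Delta Z_{\om_1}=\sum_\sigma 1\vert_{\om_1}\sigma$ gives the identity.

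\emph{The denominator $\Delta$.} For $\affa{1}$ with our normalization, $q(\alpha)=1$ for every real root, so $m_\alpha=2$ for all $\alpha\in\Phi^+$. Imaginary roots are $k\delta$ with multiplicity one. The positive real roots are $\alpha_i+k\delta$, $k\ge0$. Hence
\[
\Delta(\bx)=\prod_{k\ge0}(1-x_0^2\bx^{2k\delta})(1-x_1^2\bx^{2k\delta})\prod_{k\ge1}(1-\bx^{2k\delta}),
\]
which is exactly $(x_0^2;\bx^{2\delta})_\infty(x_1^2;\bx^{2\delta})_\infty(\bx^{2\delta};\bx^{2\delta})_\infty$.

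\emph{The action.} For $\omega=\omega_1$ we have $l_0=0$ and $l_1=1$. The explicit formulas for the action then give
\[
f\vert_{\om_1}\sigma_0=-x_0^2\,\frac{1-u/x_0}{1-ux_0}\,f(\sigma_0\bx),\qquad
f\vert_{\om_1}\sigma_1=-x_1^2 f(\sigma_1\bx).
\]
The substitutions are $\sigma_0:(x_0,x_1)\mapsto(x_0^{-1},x_1x_0^2)$ and $\sigma_1:(x_0,x_1)\mapsto(x_0x_1^2,x_1^{-1})$, and both fix $\bx^\delta$.

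\emph{Summing over $W$.} The infinite dihedral group $W$ is the disjoint union of the cosets $\{t_n\}$ and $\{t_n\sigma_1\}$, where $t_n=(\sigma_0\sigma_1)^n$ for $n\in\Z$. Since the action is a right action, $1\vert t_n\sigma_1=(1\vert t_n)\vert\sigma_1$, which is $-x_1^2$ times $1\vert t_n$ evaluated at $\sigma_1\bx$. I would first establish by induction on $n\ge0$, applying one $\sigma_0\sigma_1$ step at a time, a closed formula of the form
\[
1\vert_{\om_1}t_n=(-u)^{n}\,\bx^{\mu_n}\,\frac{(u^{-1}x_0;\bx^{2\delta})_n}{(ux_0;\bx^{2\delta})_n}.
\]
Here $\mu_n=\theta-t_n^{-1}\theta$ is computed from Lemma~\ref{lem:supp}(a), with $\theta=\omega_1+\rho$. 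In the inductive step, each $\sigma_0$ contributes a new factor $(1-u/x_0)/(1-ux_0)$. After the preceding substitutions have been applied to it, this factor becomes $(u^{-1}x_0\bx^{2k\delta};\cdot)/(ux_0\bx^{2k\delta};\cdot)$, which is why the Pochhammer base is $\bx^{2\delta}$. For $n<0$ I would run the same induction starting with $\sigma_1$ and then convert with $(a;b)_{-n}=(-b/a)^nb^{n(n-1)/2}/(b/a;b)_n$, exactly as in Corollary~\ref{cor:untwisteddeformationformula}.

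\emph{Main obstacle.} The expected difficulty is the bookkeeping needed to combine the two cosets into the single bracket $(1-x_0^{-4n}x_1^{2-4n})$. This requires the Pochhammer ratio to be a common factor of the paired terms, but that ratio is not obviously $\sigma_1$-invariant. If the naive pairing $t_n\leftrightarrow t_n\sigma_1$ fails, I would instead pair $t_n$ with a suitably re-indexed element of the other coset, for instance $t_{n'}\sigma_1$ with $n'$ chosen so that both terms carry the same number of $\sigma_0$-factors. The pairing is correct when the ratio of the two monomials is exactly $-x_0^{-4n}x_1^{2-4n}$. I would check this by computing both exponents from Lemma~\ref{lem:supp}(a) and comparing them with $3n^2+2n$ and $3n^2-n$; a check at $n=0,\pm1$ should settle the indexing before the general computation.
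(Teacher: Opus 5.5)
\textbf{Verdict: genuine gap.} Your reduction is the one the paper uses, and it is correct as far as it goes: $\Delta=(x_0^2;\bx^{2\delta})_\infty(x_1^2;\bx^{2\delta})_\infty(\bx^{2\delta};\bx^{2\delta})_\infty$, your formulas for $\vert_{\omega_1}\sigma_0$ and $\vert_{\omega_1}\sigma_1$ are right, and the paper likewise computes every $1\vert_{\omega_1}\sigma$ by induction on length and then sums.

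\textbf{The closed formula is wrong.} The formula you propose to prove for $t_n=(\sigma_0\sigma_1)^n$, $n\ge 1$, is false. Already
\[
1\vert_{\omega_1}\sigma_0\sigma_1=-x_1^2\,(1\vert_{\omega_1}\sigma_0)(\sigma_1\bx)=-u\,x_0x_1^4\,\frac{1-u^{-1}x_1\bx^{\delta}}{1-ux_1\bx^{\delta}},
\]
which has its pole at $ux_1\bx^{\delta}=1$, not at $ux_0=1$. In a right action, the factor $(1-u/x_0)/(1-ux_0)$ produced by a letter $\sigma_0$ is transformed by the substitutions of the letters that come \emph{after} it. So only the words ending in $\sigma_0$, namely $(\sigma_1\sigma_0)^n=t_{-n}$, carry $(u^{-1}x_0;\bx^{2\delta})_n/(ux_0;\bx^{2\delta})_n$; these correspond to the roots $\alpha_0+2k\delta$. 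The words $(\sigma_0\sigma_1)^n$ instead carry $(u^{-1}x_1\bx^{\delta};\bx^{2\delta})_n/(ux_1\bx^{\delta};\bx^{2\delta})_n$, corresponding to the roots $\alpha_1+(2k+1)\delta$. It is these that must be converted with the $(a;b)_{-n}$ formula. Consequently the summation index $n$ on the left-hand side corresponds to $(\sigma_1\sigma_0)^n$, which is the reverse of your assignment.

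\textbf{The pairing is missing.} The pairing, which you leave open, is the real content of the argument. Your fallback rule does not determine it, since $t_n\sigma_1$ and $t_{-n}\sigma_1$ both contain $|n|$ copies of $\sigma_0$. The missing idea is to pair by \emph{left} multiplication, $\sigma\leftrightarrow\sigma_1\sigma$. Then
\[
1\vert_{\omega_1}\sigma_1\sigma=(-x_1^2)\vert_{\omega_1}\sigma=-\bx^{2\sigma^{-1}\alpha_1}\,(1\vert_{\omega_1}\sigma)
\]
by Lemma~\ref{cor:pulloutmonomial} (or directly, since here each $\vert_{\omega_1}\sigma_i$ is a multiplier times the substitution $\bx\mapsto\sigma_i\bx$). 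So the Pochhammer ratio is automatically common to the two terms. Take $\sigma=(\sigma_1\sigma_0)^n$, $n\in\Z$, as representatives of $\langle\sigma_1\rangle\backslash W$. Then $\sigma^{-1}\alpha_1=(\sigma_0\sigma_1)^n\alpha_1=\alpha_1-2n\delta$, hence
\[
\sum_{\sigma\in W}1\vert_{\omega_1}\sigma=\sum_{n\in\Z}\bigl(1-x_0^{-4n}x_1^{2-4n}\bigr)\,1\vert_{\omega_1}(\sigma_1\sigma_0)^n .
\]
It then remains only to show
\[
1\vert_{\omega_1}(\sigma_1\sigma_0)^n=(-u)^nx_0^{3n^2+2n}x_1^{3n^2-n}\,\frac{(u^{-1}x_0;\bx^{2\delta})_n}{(ux_0;\bx^{2\delta})_n}.
\]
For $n\ge0$ this is a direct induction. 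For $n<0$, run the induction on $(\sigma_0\sigma_1)^{-n}$ and then apply the $(a;b)_{-n}$ conversion. In your notation, the partner of $t_n$ is $\sigma_1t_n=t_{-n}\sigma_1$. This is exactly what the paper's four formulas encode: it groups lengths $2n$ and $2n-1$ when $\sigma\alpha_0<0$, and lengths $2n$ and $2n+1$ when $\sigma\alpha_1<0$, then substitutes $n\to-n$ in the latter.
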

\begin{proof}
To show that $\sum_{\ss\in W} 1|_{\om_1} \ss $ is the left-hand side of the identity above,
we consider four cases.

Assume $\sigma \alpha_0 < 0$. An induction on length shows that if $l(\sigma) = 2n$, then
\begin{align*}
1 \vert_{\omega_1} \sigma = (-u)^nx_0^{3n}\bx^{(3n^2-n)\delta}\frac{(u^{-1}x_0;\bx^{2\delta})_n}{(ux_0;\bx^{2\delta})_n}
\end{align*}
and if $l(\sigma) = 2n-1$ then
\begin{align*}
1 \vert_{\omega_1} \sigma = -(-u)^nx_0^{3n-2}\bx^{(3n^2-5n+2)\delta}\frac{(u^{-1}x_0;\bx^{2\delta})_n}{(ux_0;\bx^{2\delta})_n}.
\end{align*}

Assume $\sigma\alpha_1<0$. If $l(\sigma) = 2n$ then
\begin{align*}
1 \vert_{\omega_1} \sigma = (-u)^nx_1^{3n}\bx^{(3n^2-2n)\delta}\frac{(u^{-1}x_1\bx^{\delta};\bx^{2\delta})_n}{(ux_1\bx^{\delta};\bx^{2\delta})_n}
\end{align*}
and if $l(\sigma) = 2n+1$ then
\begin{align*}
1 \vert_{\omega_1} \sigma = -(-u)^nx_1^{3n+2}\bx^{(3n^2+2n)\delta}\frac{(u^{-1}x_1\bx^{\delta};\bx^{2\delta})_n}{(ux_1\bx^{\delta};\bx^{2\delta})_n}.
\end{align*}
Changing variables $n \to -n$ in the third and fourth equalities gives
the formula stated above.
\end{proof}

\subsection{The twisted multiple Dirichlet series attached to \texorpdfstring{$\affa{1}$}{A1}}
\label{sec:a1globalcomputations}
Based on the results of the previous subsections, we compute explicitly the quadratic twisted Weyl group multiple Dirichlet
series for $\affa{1}$ over the function field $\mathbb{F}_q(t)$, with
$q\equiv 1 \pmod{2n}$. We consider the twisted MDS $\Zc(\bs; \bc)$ defined in~\eqref{eq:globalwmds}, for $\bs=(s_0, s_1)$
and $\bc=(c_0,c_1)$, with monic twisting parameters $c_0, c_1 \in \mathbb{F}_q[t]$.
We assume throughout that $c_0, c_1$ are square-free and coprime. This condition is satisfied in practice, e.g. by the twisting parameters needed for the sieving procedure in the moment problem~\cite{DiaconuTwiss2023Secondary}. In the untwisted case ($c_0=c_1=1$), we also use the notation $\wZ(\bs)$.

Since the generalized Cartan matrix for $\affa{1}$ has even off-diagonal entries,
the coefficients $H$ in Section~\ref{sec:WMDS} are multiplicative:
\[
H(\bm; \bc)=\prod_{\pi^{k_0}\| m_0, \pi^{k_1}\| m_1}
H\big(\pi^{k_0}, \pi^{k_1}; \pi^{v_\pi(c_0)}, \pi^{v_\pi(c_1)} \big)
\chi_{c_0^{(\pi)}} (\pi^{k_0}) \chi_{c_1^{(\pi)}} (\pi^{k_1})
\]
where $\chi_a(b)=\symb{a}{b}$ and $a^{(b)}$ denotes the part of $a$ coprime to $b$.

Since $c_0$ and $c_1$ are coprime and square-free, the $\pi$-parts appearing in~\eqref{eq:p_parts}
are among $Z_0$, $Z_{\om_i}$, $i=0,1$. In view of the local-to-global principle later on,
we replace these $\pi$-parts by normalized versions $\wZ_\om$, given by
\begin{align*}
    \wZ_{0}(\xx;u) = \frac{Z_0(\xx;u)}{(u^2\bx^{\delta};\bx^{2\delta})_{\infty}}=\frac{1}{(ux_0;\bx^{\delta})_{\infty}(ux_1;\bx^{\delta})_\infty}
\end{align*}
and $\wZ_{\omega_i}(\xx;u)=Z_{\omega_i}(\xx;u)$ for $i=0,1$.

Because of the multiplicativity property of the coefficients $H$, the MDS $\Zc(\bs; \bc)$ decomposes as an Euler product:
\begin{align*}
\Zc(\bs;\bc) = \prod_{\pi}\wZ_{\om_\pi(\bc)}(|\pi|^{-s_0}\chi_{c_0^{(\pi)}}(\pi), |\pi|^{-s_1}\chi_{c_1^{(\pi)}}(\pi);|\pi|^{-1/2})
\end{align*}
where the product is over monic irreducible polynomials $\pi$,
and $\om_\pi(\bc)=v_\pi(c_0)\omega_0+v_\pi(c_1)\omega_1$. Note that evaluation of the $\pi$-part
takes place at $u=|\pi|^{-1/2}$, which corresponds to $q=|\pi|^{-1}$ in~\eqref{eq:p_parts}.

Recall that, for $c\in \F_q[t]$ monic square-free, the Dirichlet series
\begin{align*}
L(s,\chi_c) =\sum_{m} \frac{\chi_c(m)}{|m|^s} = \prod_{\pi}\frac{1}{1-\chi_c(\pi)|\pi|^{-s}},
\end{align*}
is in fact a polynomial in $q^{-s}$ of degree $\deg c-1$ when $c\ne 1$,
while for $c=1$ it equals $\zeta_{\F_q(t)}(s)=(1-q^{1-s})^{-1}$.
From the discussion above, we obtain the following result.
Recall that $\dd(\bs)=s_0+s_1$.
\begin{theorem}
\label{thm:explicitaffa1mds}
Assume $c_0, c_1\in \F_q[t]$ are coprime, monic, square-free polynomials.
We have
\begin{align*}
\mathcal{Z}(\bs;c_0,c_1) &= \prod_{\substack{n\geq 0\\ n\ \mathrm{even}}}L\left(\tfrac{1}{2}+s_0+n\delta(\bs), \chi_{c_0}\right)L\left(\tfrac{1}{2}+s_1+n\delta(\bs), \chi_{c_1}\right)\\
&\cdot\prod_{\substack{n \geq 0\\ n\ \mathrm{odd}}}L\left(\tfrac{1}{2}+s_0+n\delta(\bs), \chi_{c_1}\right)L\left(\tfrac{1}{2}+s_1+n\delta(\bs), \chi_{c_0}\right)
\end{align*}
for $\Real(\delta(\bs))>0$, except for simple poles among $q^{1/2-s_i-n\dd(\bs)}=1$ when $c_0$ or $c_1$ are 1.
\end{theorem}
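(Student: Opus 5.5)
The plan is to prove the identity prime by prime, using the Euler product for $\mathcal{Z}(\bs;c_0,c_1)$ displayed just before the statement, with local factors $\wZ_{\om_\pi(\bc)}$ evaluated at $x_i=|\pi|^{-s_i}\chi_{c_i^{(\pi)}}(\pi)$ and $u=|\pi|^{-1/2}$. These local factors are explicit infinite products by Theorem~\ref{the:explicituntwistedaverage} (after the normalization defining $\wZ_0$) and Theorem~\ref{the:explicittwistedaverage}. I will then match each local factor with the $\pi$-Euler factor of the product of $L$-functions on the right. Since $c_0,c_1$ are coprime and square-free, only three cases occur: $\pi\nmid c_0c_1$ (so $\om=0$), $\pi\mid c_0$ (so $\om=\om_0$), and $\pi\mid c_1$ (so $\om=\om_1$).

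\emph{Case $\pi\nmid c_0c_1$.} Here $\wZ_0=\prod_{k\ge0}(1-ux_0\bx^{k\dd})^{-1}(1-ux_1\bx^{k\dd})^{-1}$, and
\[
\bx^{\dd}=|\pi|^{-\dd(\bs)}\chi_{c_0}(\pi)\chi_{c_1}(\pi).
\]
The factor indexed by $k$ for $x_0$ is $\bigl(1-|\pi|^{-1/2-s_0-k\dd(\bs)}\chi_{c_0}^{k+1}\chi_{c_1}^{k}(\pi)\bigr)^{-1}$. Since the characters are quadratic, this is the Euler factor of $L(\tfrac12+s_0+k\dd(\bs),\chi_{c_0})$ for $k$ even and of $L(\tfrac12+s_0+k\dd(\bs),\chi_{c_1})$ for $k$ odd. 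The $x_1$ factors work the same way with $c_0$ and $c_1$ swapped, which is exactly the parity pattern in the statement.

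\emph{Case $\pi\mid c_0$.} First I will record the $\om_0$ analogue of Theorem~\ref{the:explicittwistedaverage}, obtained from the diagram symmetry $\a_0\leftrightarrow\a_1$:
\[
Z_{\om_0}=\frac{1}{(ux_1;\bx^{2\dd})_\infty\,(ux_0\bx^{\dd};\bx^{2\dd})_\infty}.
\]
Now $x_0$ carries the character $\chi_{c_0/\pi}(\pi)$ and $x_1$ carries $\chi_{c_1}(\pi)$. The factors $ux_1\bx^{2m\dd}$ involve $x_1$ times a square, so they give $\chi_{c_1}(\pi)|\pi|^{-1/2-s_1-2m\dd(\bs)}$, which is the Euler factor of $L(\tfrac12+s_1+n\dd(\bs),\chi_{c_1})$ with $n=2m$ even. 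The factors $ux_0\bx^{(2m+1)\dd}$ equal $u\,x_0^{2m+2}x_1^{2m+1}$, which gives $\chi_{c_1}(\pi)|\pi|^{-1/2-s_0-n\dd(\bs)}$ with $n=2m+1$ odd. That is the Euler factor of $L(\cdot,\chi_{c_1})$ at $s_0$ for odd $n$, as required. The remaining $L$-functions in the statement involve $\chi_{c_0}$, and their Euler factors at $\pi\mid c_0$ equal $1$, so everything matches. The case $\pi\mid c_1$ follows identically from Theorem~\ref{the:explicittwistedaverage}.

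Taking the product over all $\pi$ then gives the identity for $\Real(s_i)$ large, where every product converges absolutely and rearrangement is harmless. The step I expect to need the most care is extending to the whole region $\Real(\dd(\bs))>0$. For $c\neq1$, $L(s,\chi_c)$ is a polynomial in $q^{-s}$ of degree $\deg c-1$ whose reciprocal roots have absolute value $q^{1/2}$ (Riemann hypothesis for curves). Hence $1-L(\tfrac12+s_i+n\dd(\bs),\chi_c)$ is $O(q^{-n\Real\dd(\bs)})$ uniformly on compacta, and the infinite product over $n$ converges locally uniformly to a holomorphic function once we are away from the finitely many initial factors. When some $c_i=1$, the factors are $\zeta_{\F_q(t)}(s)=(1-q^{1-s})^{-1}$, which contribute exactly the simple poles at $q^{1/2-s_i-n\dd(\bs)}=1$. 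Finally, I will invoke uniqueness of analytic continuation, using Theorem~\ref{thm:fullglobalconvergence} for the meromorphic continuation of the left-hand side on the connected tube domain, to conclude the equality on all of $\Real(\dd(\bs))>0$.
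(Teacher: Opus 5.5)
Your proposal is correct and is essentially the paper's proof, written out in more detail: you match Euler factors prime by prime using the explicit product formulas for $\wZ_0$ and $Z_{\om_i}$ (with $Z_{\om_0}$ obtained from the diagram symmetry), and you get the region $\Real\dd(\bs)>0$ from the fact that $L(s,\chi_c)$ is a polynomial in $q^{-s}$ with constant term $1$ for $c\neq 1$. Two steps at the end are unnecessary and slightly off. The Riemann hypothesis is not needed, and as stated it is not quite right: when $\deg c$ is even, the factor $(1-q^{-s})^{\varepsilon_c}$ contributes a reciprocal root $1$. Theorem~\ref{thm:fullglobalconvergence} is also not needed, and it only continues $\Zc$ to the tube over $X_1\subset\{\dd(\bs)>2\}$; the right-hand side is meromorphic on the connected domain $\Real\dd(\bs)>0$ and equals $\Zc$ where both converge absolutely, so it furnishes the continuation by itself.
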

\begin{proof}
The explicit formulas for $\wZ_0$, $\wZ_{\om_i}$ gives the formula above. The region of convergence folows from the that fact $L(s,\chi_c)$ are polynomials in $q^{-s}$ for $c\ne 1$.
\end{proof}
\begin{remark}
The region of convergence in the theorem above is the  full interior of the complexified Tits cone $\XC^\circ$, extending to the largest possible domain the generic region of convergence in Theorem~\ref{thm:fullglobalconvergence}.
\end{remark}

In the untwisted case, we have the following local-to-global principle, as in the case of finite $\Phi$.
\begin{corollary}
\label{cor:untwistedlocaltoglobal}
Let $c_0=c_1=1$. Then
\begin{align*}
\mathcal{Z}(\bs) = \wZ_0(q^{-s_0},q^{-s_1};q^{1/2}).
\end{align*}
\end{corollary}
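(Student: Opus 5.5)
The plan is to specialize Theorem~\ref{thm:explicitaffa1mds} to $c_0=c_1=1$ and compare the result directly with the explicit product for $\wZ_0$. When $c_0=c_1=1$, every $\pi$-part in the Euler product is $\wZ_0$. Also $\chi_{1}(\pi)=1$, so
\[
\Zc(\bs)=\prod_\pi \wZ_0\big(|\pi|^{-s_0},|\pi|^{-s_1};|\pi|^{-1/2}\big)=\prod_\pi\prod_{n\ge0}\frac{1}{(1-|\pi|^{-1/2-s_0-n\dd(\bs)})(1-|\pi|^{-1/2-s_1-n\dd(\bs)})}.
\]
Here we use $\wZ_0(\xx;u)=1/\big((ux_0;\xx^\dd)_\infty(ux_1;\xx^\dd)_\infty\big)$ and $(\xx^{\dd})^n=|\pi|^{-n\dd(\bs)}$.

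Next we interchange the products over $\pi$ and $n$, which is justified by absolute convergence for $\re(s_i)$ large. Each inner product over $\pi$ is then an Euler product for $\zeta_{\F_q(t)}$:
\[
\prod_\pi (1-|\pi|^{-s})^{-1}=\zeta_{\F_q(t)}(s)=(1-q^{1-s})^{-1}.
\]
This is exactly the $c=1$ case of Theorem~\ref{thm:explicitaffa1mds}, where both the even-$n$ and odd-$n$ factors become $\zeta_{\F_q(t)}(\tfrac12+s_i+n\dd(\bs))$. We therefore obtain
\[
\Zc(\bs)=\prod_{n\ge0}\frac{1}{(1-q^{1/2-s_0-n\dd(\bs)})(1-q^{1/2-s_1-n\dd(\bs)})}.
\]

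Finally we recognise this product as $\wZ_0$ evaluated at $x_i=q^{-s_i}$ and $u=q^{1/2}$. Indeed, $ux_i(\xx^\dd)^n=q^{1/2-s_i-n\dd(\bs)}$, so the product equals $1/\big((q^{1/2}x_0;\xx^\dd)_\infty(q^{1/2}x_1;\xx^\dd)_\infty\big)=\wZ_0(q^{-s_0},q^{-s_1};q^{1/2})$. This gives the identity on the region of absolute convergence. Both sides are meromorphic on $\re\dd(\bs)>0$, since the product converges there apart from the listed simple poles, so the identity extends by analytic continuation.

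The only delicate point is bookkeeping. The local parameter is $u=|\pi|^{-1/2}$, while the global one is $u=q^{1/2}$. The inversion comes from the Euler product identity $\prod_\pi(1-|\pi|^{-s})^{-1}=(1-q^{1-s})^{-1}$, and the exponent $1-s$ with $s=\tfrac12+s_i+n\dd(\bs)$ produces exactly the $q^{1/2}$ factor. We also need to justify the product interchange, which follows from absolute convergence when $\re(s_i)>1$.
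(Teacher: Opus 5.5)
Your proof is correct and takes the same route as the paper. The paper's proof is the one-line specialization of Theorem~\ref{thm:explicitaffa1mds} to $c_0=c_1=1$, combined with $\zeta_{\F_q(t)}(s)=(1-q^{1-s})^{-1}$. You simply write out the Euler-product bookkeeping that the paper leaves implicit: the local parameter $u=|\pi|^{-1/2}$ becomes the global parameter $u=q^{1/2}$.
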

\begin{proof}
This follows immediately from Theorem~\ref{thm:explicitaffa1mds} and the formula
for $\zeta_{\F_q(t)}(s)$.
\end{proof}

The choice of normalising factor in $\wZ_0$ was made precisely so that the local-to-global
principle holds. It is interesting to note that the normalising factor for the untwisted Chinta-Gunnells average corresponding to the root system $\wD_4$ is exactly the square of the one given here \cite[Sec. 5]{diaconu2023quadraticweylgroupmultiple}.

\subsection{Extra functional equation satisfied by twisted MDS}

We now show that the multiple Dirichlet series $\mathcal{Z}(\bs;\bc)$ for $\wA_1$ also satisfies
an extra functional equation of the type satisfied by its $\pi$-parts in
\S\ref{subsec:untwistedaffa1computation} and \S\ref{subsec:twistedaffA1}. We assume, as in the previous
section, that $c_0$ and $c_1$ are coprime, monic, square-free polynomials.

In the untwisted case, Corollary~\ref{cor:untwistedlocaltoglobal} already shows that $\Zc(\bs)$
is invariant under the action $|_0\tau$ defined in \S\ref{subsec:untwistedaffa1computation}.
For general $\bc$, the situation is complicated by the fact that the coefficients
of $\Zc(\bs;\bc)$ as a power series in $\xx=q^{-\bs}$ are no longer polynomials in $u$.
Indeed, for $\deg c\ge 3$, the Dirichlet polynomials $L(s,\chi_c)$ in Theorem~\ref{thm:explicitaffa1mds} have coefficients which are not polynomial in $q$.
Instead, we use the fact that these coefficients are $q$-Weil numbers, as we now recall.

Let $C=C_d$ be the hyperelliptic curve $y^2=d(t)$ for  $d \in \mathbb{F}_q[t]$ square-free.
Its zeta function is
\begin{align*}
Z_{C}(t) = \frac{P_{C}(t)}{(1-t)(1-qt)}
\end{align*}
with $P_C(t) \in \mathbb{Z}[t]$ of degree $2g$, where $g$ the genus of $C$, and constant term $P_C(0) = 1$. Furthermore it is a consequence of the Riemann hypothesis over $\mathbb{F}_q[t]$ (proven by Weil for nonsingular projective curves  \cite{weil1948courbes}) that one may write $P_C(t) = \prod_{i=1}^{2g}(1-\alpha_it)$ where the $\alpha_i$ are $q$-Weil numbers of weight $1$. Thus writing $P_C(t) = \sum_{i=0}^{2g}a_it^i$ we see that $a_i$ is a sum of $q$-Weil numbers of weight $i$. Recall that a $q$-Weil number of weight $m\in \Z$ is an algebraic number all of whose Galois conjugates have absolute value $q^{m/2}$.

For $d\ne 1$, the $L$-function $L(s,\chi_d)$ is related to the numerator of $Z_{C_d}$ by
\be\label{eq:Lchi}
 L(s,\chi_d) = (1 -q^{-s})^{\varepsilon_{d}}P_{C_d}(q^{-s}),
\ee
where $\varepsilon_{d}$ is $0$ or $1$ according as $\deg(d)$ is odd or even \cite[\S 2]{Diaconu2018ModuliOH}.
In particular, the coefficients of $L(s,\chi_d)$  as a polynomial in $q^{-s}$ are $q$-Weil numbers,
and therefore the same is true of the coefficients of $\Zc(\bs;\bc)$ as a power series in
$q^{-\bs}$, by Theorem~\ref{thm:explicitaffa1mds}.

\begin{definition}
For a power series $f(\bx)$ with coefficients that are sums of $q$-Weil numbers, we define
$f^{\tau}(\bx)$ as the series obtained by replacing each $q$-Weil number $\a$ of weight $m$
appearing in the coefficients by $\a \xx^{m\dd}$.
\end{definition}
Note that the resulting series $f^\tau$ may not be well-defined even as a formal series,
e.g. for $f(\bx)=\sum_{n \ge 0} q^{-n/2} \xx^{n}$. However it will be well-defined for
the power series $\Zc(\bs;\bc)$.

We extend the actions defined in \S\ref{subsec:untwistedaffa1computation} and \S\ref{subsec:twistedaffA1}
to a power series $f(\xx)$ as in the previous definition:
\[
f\vert_0\tau(\xx)=\frac{f^{\tau}(\xx)}{(1-ux_0)  (1-ux_1)}, \quad
f\vert_{\om_1}\tau^2(\xx)=\frac{f^{\tau^2}(\xx)}{(1-ux_0\xx^\dd)  (1-ux_1)},
\]
where $u=\sqrt q$, with $f\vert_{\om_0}\tau^2$ defined by symmetry.
Note that $\wZ_{0}$ and $Z_{\om_i}$ are invariant under the corresponding actions
by the results in \S\ref{subsec:untwistedaffa1computation} and \S\ref{subsec:twistedaffA1}.
We also define
$$f \vert_{\omega_0+\omega_1} \tau = f^{\tau},$$
so that the function $Z_{\om_0+\om_1}$, which equals 1 by Macdonald's formula,
is also invariant under this action.

For a twisting parameter $\om=l_0\om_0+l_1\om_1\in P^+$, we denote by
$\ov{\om}\in\{0, \om_0, \om_1, \om_0+\om_1\}$ its reduction modulo 2.
Since the Chinta-Gunnells actions with twists $\om$ and $\ov{\om}$ differ only by monomials in $x_i$,
it follows that  the Weyl group action $|_\om$ commutes with the actions $|_{\ov{\om}} \tau$
or $|_{\ov{\om}}\tau^2$ defined above, by the results of \S\ref{subsec:untwistedaffa1computation}, \S\ref{subsec:twistedaffA1} (the case $\ov{\om}=\om_0+\om_1$ is trivial since in this case the Chinta-Gunnells action does not depend on $u$).

By Theorem~\ref{thm:fullglobalconvergence}, the multiple Dirichlet series $\Zc(\bs;\bc)$ is invariant
under the Chinta-Gunnells action with twisting parameter
$\om_\bc:=\deg c_0\cdot \om_0+\deg c_1\cdot \om_1$,
after replacing $|m_i|^{-s_i}$ by $x_i^{\deg m_i}$ in its definition~\eqref{eq:globalwmds}
and taking $u=\sqrt q$ in the definition of the action. The main result of this section
shows that $\Zc(\bs;\bc)$ also satisfies an extra functional equation under the action $|_{\ov{\om}_\bc} \tau$, or  $|_{\ov{\om}_\bc} \tau^2$,
just like its $\pi$-parts.

\begin{theorem}
\label{thm:tauactiononmds}
Let $c_0, c_1 \in \mathbb{F}_q[t]$ be square-free and coprime and set $n_i=\deg c_i$ and
$\om_{\bc}=n_0 \om_0+n_1\om_1$.  Let $\Wc(\bx; \bc)$ denote
the MDS $\mathcal{Z}(\bs;c_0,c_1)$ after changing variables $q^{-s_i}\mapsto x_i$.

We then have the functional equations:
\[
\Wc(\bx;\bc)=\Wc(\bx;\bc)\vert_{\ov{\om}_\bc} \tau\qquad
\text{if  $n_0=n_1=0$ or if $n_0$, $n_1$ odd};
\]
\[
\Wc(\bx;\bc)=\Wc(\bx;\bc)\vert_{\ov{\om}_\bc} \tau^2 \cdot
\begin{cases}
 1 & \text{ if $n_0=0$ and $n_1$ odd}\\
 q\xx^{2\dd} & \text{ if $n_0\ne 0$ even and $n_1=0$ or $n_1$ odd }  \\
 q^2\xx^{4\dd} & \text{ if $n_0$, $n_1$ nonzero even.  }
\end{cases}
\]
\end{theorem}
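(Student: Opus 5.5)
The plan is to work directly from the explicit Euler-product formula of Theorem~\ref{thm:explicitaffa1mds}. Write $p=\xx^\dd$ and $u=\sqrt q$. Under $x_i=q^{-s_i}$, the factor $L(\tfrac12+s_i+n\dd(\bs),\chi_c)$ becomes $L_c(u^{-1}x_ip^n)$. Here $L_c(T)$ is the polynomial $(1-T)^{\varepsilon_c}P_{C_c}(T)$ when $c\ne1$, and $(1-uT)^{-1}$ when $c=1$. The $\tau$ operation acts coefficientwise on $q$-Weil numbers, so I first determine how $L_c(u^{-1}T)$ transforms.

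By~\eqref{eq:Lchi}, $P_{C_c}(u^{-1}T)=\prod_k(1-\a_k u^{-1}T)$ with $\a_k$ of weight $1$. Hence each $\a_ku^{-1}$ has weight $0$ and is \emph{fixed} by $\tau$, while each weight-$1$ coefficient such as $u$ picks up a factor $p$. Multiplying out, I expect $P^\tau(u^{-1}T)=P(u^{-1}T)$, because every monomial coefficient $e_j(\a)u^{-j}$ has total weight $0$. The factor $(1-u^{-1}T)^{\varepsilon_c}$ becomes $(1-u^{-1}p^{-1}T)^{\varepsilon_c}$, which is not fixed. The case $c=1$ gives $(1-uT)^{-1}\mapsto(1-upT)^{-1}$. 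I should double check the convention: "weight" is measured in the expanded coefficient as a polynomial in $\xx$, and $u^{-1}$ has weight $-1$. Consistency with $\S$\ref{subsec:untwistedaffa1computation}, where $u\mapsto up$, confirms this.

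With this, I apply $\tau$ (or $\tau^2$, which multiplies weight-$m$ numbers by $p^{2m}$) to each factor of the Euler product and compare with $\Wc$, case by case on the parities of $n_0,n_1$.

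\textbf{$n_0=n_1=0$.} Here $\Wc=\wZ_0(\xx;u)=\prod_n(1-ux_0p^n)^{-1}(1-ux_1p^n)^{-1}$. Then $\tau$ shifts $n\to n+1$, and the prefactor $1/((1-ux_0)(1-ux_1))$ of $|_0\tau$ restores the missing $n=0$ terms.

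\textbf{$n_0,n_1$ odd.} All factors are $P$'s with $\varepsilon=0$, so $\Wc^\tau=\Wc$, which matches $|_{\om_0+\om_1}\tau$ being the bare $\tau$.

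\textbf{Mixed cases.} Now $\ov\om_\bc$ is $\om_0$ or $\om_1$, and I use $\tau^2$. The $P$-parts are invariant. Each $L_1$ factor gives a telescoping product along $n\mapsto n+2$, whose lost boundary term should be exactly one of the denominators $(1-ux_0)$ or $(1-ux_1p)$ in $|_{\om_i}\tau^2$. Note that the roles of $c_0,c_1$ swap between even and odd $n$, which is why only shifts by $2$ are clean. For $c\ne1$ of even degree, the factor $(1-u^{-1}x_ip^n)$ maps to $(1-u^{-1}x_ip^{n-2})$. Telescoping then produces extra boundary factors $(1-u^{-1}x_ip^{-2})(1-u^{-1}x_jp^{-1})$, to be rewritten as a monomial times the $(1-ux\ldots)$-type factors. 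Concretely, $1-u^{-1}y^{-1}=-u^{-1}y^{-1}(1-uy)$, and the resulting monomial is what should produce the multipliers $q\xx^{2\dd}$ or $q^2\xx^{4\dd}$, one per nonzero even $n_i$. I will track the signs and powers in each subcase, pairing the boundary factors with the denominators of $|_{\ov\om}\tau^2$.

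The main obstacle is this bookkeeping in the even-degree cases: verifying that the leftover boundary factors combine exactly into $q\,\xx^{2\dd}$ per even nonzero $n_i$ with no stray sign. A secondary point is justifying that $\tau$ is well-defined on $\Wc$ and acts factorwise on the infinite product. I would handle this by observing that each coefficient of $\xx^\lambda$ involves finitely many factors, so $\tau$ can be applied to finite truncations.
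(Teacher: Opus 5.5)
Your plan is the paper's own argument. The paper writes $\Wc=\Wc_\geom\,\Delta_{n_0}(x_0,x_1;u)\,\Delta_{n_1}(x_1,x_0;u)$ and notes that the product $\Wc_\geom$ of the $P_{C_d}$'s is $\tau$-fixed because its coefficients have weight $0$. It then reads off $\Wc/\Wc^{\tau}$ or $\Wc/\Wc^{\tau^2}$ from the shift of the Pochhammer symbols, which is exactly your telescoping.

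Two things need fixing in the bookkeeping you defer. The first is a notational slip: with $T=u^{-1}x_i\xx^{n\dd}$ one has $L_1(T)=(1-qT)^{-1}$, not $(1-uT)^{-1}$. Your later formula $\prod_n(1-ux_i\xx^{n\dd})^{-1}$ is the correct one.

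The second is more substantive: your ``mixed cases'' do not all have $\ov\om_\bc\in\{\om_0,\om_1\}$. When $n_0,n_1$ are both even and not both zero, one has $\ov\om_\bc=0$. This covers the $q^2\xx^{4\dd}$ case and the $q\xx^{2\dd}$ case with $n_1=0$. There the operator is $|_0\tau$ applied twice,
\[
f\mapsto \frac{f^{\tau^2}}{(1-ux_0)(1-ux_1)(1-ux_0\xx^\dd)(1-ux_1\xx^\dd)},
\]
which has four denominator factors. Matching boundary terms against the two denominators of some $|_{\om_i}\tau^2$, as you propose, would fail in these cases.

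With this correction everything closes. The $c_0$-factors contribute the following to $\Wc/\Wc^{\tau^2}$:
\begin{itemize}
\item the factor $\bigl((1-ux_0)(1-ux_1\xx^\dd)\bigr)^{-1}$ if $c_0=1$;
\item the same factor times $q\xx^{2\dd}$ if $n_0\neq0$ is even, since by your identity $(1-u^{-1}x_0\xx^{-2\dd})(1-u^{-1}x_1\xx^{-\dd})=q^{-1}\xx^{-2\dd}(1-ux_0)(1-ux_1\xx^{\dd})$, the two signs cancelling;
\item the factor $1$ if $n_0$ is odd.
\end{itemize}
The $c_1$-factors give the same with $x_0\leftrightarrow x_1$.

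If exactly one $n_i$ is odd, these factors match $B_{\om_1}=\bigl((1-ux_0)(1-ux_1\xx^\dd)\bigr)^{-1}$ from \S\ref{subsec:twistedaffA1}, or its mirror $B_{\om_0}$. Use that normalization: the display defining $|_{\om_1}\tau^2$ in the final subsection has $x_0$ and $x_1$ interchanged. If both $n_i$ are even, the factors match the four denominators above. In every case the leftover monomial is $q\xx^{2\dd}$ per nonzero even $n_i$, as claimed.
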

By symmetry, the theorem covers all possible cases.
\begin{proof}
Using the relation~\eqref{eq:Lchi}, we express each $L$-function $L(\tfrac12+s,\chi_d)$ in the product in  Theorem~\ref{thm:explicitaffa1mds}
in terms of $P_{C_d}(q^{-\tfrac12-s})$, and denote the product involving the polynomials $P_{C_d}$ by  $\Wc_\geom(\bx;\bc)$, after the change of
variables $x_i=q^{-s_i}$ (for $d=1$ we set $P_{C_d}=1$).  We obtain
\[\Wc(\bx;\bc)=\Wc_\geom(\bx;\bc)\Delta_{n_0}(x_0,x_1;u) \Delta_{n_1}(x_1,x_0;u)
\]
where $u=\sqrt q $ and for $n\ge 0$ we define:
\[\Delta_n(x_0,x_1;u)=\begin{cases} 1 & \text{if $n$ odd}  \\
            \qpoc{u^{-1}x_0}{\xx^{2\dd}}\cdot \qpoc{u^{-1}x_1\xx^\dd}{\xx^{2\dd}} & \text{if $n\ne 0$ even}  \\
             \qpoc{ux_0}{\xx^{2\dd}}^{-1}\cdot \qpoc{ux_1\xx^\dd}{\xx^{2\dd}}^{-1}   & \text{if $n=0$}.
                    \end{cases}
\]
Note that $\Wc_\geom^\tau=\Wc_\geom$, because each Dirichlet polynomial $P_{C_d}(q^{-\tfrac12-s})$ has coefficients that are
$q$-Weil numbers of weight 0. Therefore the ratios $\Wc/\Wc^\tau$ and $\Wc/\Wc^{\tau^2}$ can be easily computed from the transformation
formula for the Pochhammer symbols above under $u\mapsto u\xx^\dd$.
Comparing with the definition of the corresponding actions $|_{\ov{\om}_\bc}\tau$ or $|_{\ov{\om}_\bc}\tau^2$
(which depend only on the parity of $n_0$ and $n_1$)   finishes the proof.
\end{proof}

\bibliographystyle{amsalpha}
\bibliography{bibliography1.bib}

\Addresses

\end{document}